\numberwithin{equation}{section}
\tikzset{sgplattice/.style={inner sep=1pt,norm/.style={red!50!blue},char/.style={blue!50!black},
  lin/.style={black!50}},cnj/.style={black!50,yshift=-2.5pt,left=-1pt of #1,scale=0.5,fill=white}}
\DeclareFontFamily{U}{mathb}{\hyphenchar\font45}
\DeclareFontShape{U}{mathb}{m}{n}{
      <5> <6> <7> <8> <9> <10> gen * mathb
      <10.95> mathb10 <12> <14.4> <17.28> <20.74> <24.88> mathb12
      }{}
\DeclareSymbolFont{mathb}{U}{mathb}{m}{n}
\DeclareMathSymbol{\righttoleftarrow}{3}{mathb}{"FD}
\theoremstyle{plain}
\newtheorem{prop}{Proposition}[section]
\newtheorem{theon}{Theorem}
\newtheorem{theo}[prop]{Theorem}
\newtheorem{coro}[prop]{Corollary}
\newtheorem{corol}[theon]{Corollary}
\newtheorem{lemm}[prop]{Lemma}
\theoremstyle{definition}
\newtheorem{conj}[prop]{Conjecture}
\newtheorem{rema}[prop]{Remark}
\newtheorem{exam}[prop]{Example}
\newcommand{\actsfromleft}{\mathrel{\reflectbox{$\righttoleftarrow$}}}
\newcommand{\actsfromright}{\righttoleftarrow}
\def\lra{\longrightarrow}
\def\cM{{\mathcal M}}
\def\fA{{\mathfrak A}}
\def\fD{{\mathfrak D}}
\def\fS{{\mathfrak S}}
\def\fS{{\mathfrak S}}
\def\bG{{\mathbb G}}
\def\bP{{\mathbb P}}
\def\bQ{{\mathbb Q}}
\def\bZ{{\mathbb Z}}
\def\rH{{\mathrm H}}
\def\bF{{\mathbb F}}
\def\Pic{\mathrm{Pic}}
\def\Aut{\mathrm{Aut}}
\def\SL{\mathsf{SL}}
\def\GL{\mathsf{GL}}
\def\PGL{\mathsf{PGL}}
\def\lim{\mathrm{lim}}
\def\Cr{\mathrm{Cr}}
\begin{document}

\title[Equivariant geometry of threefolds]{Equivariant geometry of the Segre cubic and the Burkhardt quartic}

\author{Ivan Cheltsov}
\address{Department of Mathematics, University of Edinburgh, UK}

\email{I.Cheltsov@ed.ac.uk}

\author{Yuri Tschinkel}
\address{
  Courant Institute,
  251 Mercer Street,
  New York, NY 10012, USA}
  
\address{Simons Foundation,
160 Fifth Avenue,
New York, NY 10010,
USA}
\email{tschinkel@cims.nyu.edu}

\author{Zhijia Zhang}
\address{
Courant Institute,
  251 Mercer Street,
  New York, NY 10012, USA
}

\email{zhijia.zhang@cims.nyu.edu}

\date{\today}

\begin{abstract}
We study linearizability and stable linearizability of actions of finite groups on the Segre cubic and Burkhardt quartic, using techniques from group cohomology, birational rigidity, and the Burnside formalism. 
\end{abstract}

\maketitle

\section{Introduction}
\label{sect:intro}

Let $G$ be a finite group. 
We study generically free $G$-actions on rational Fano threefolds, 
over an algebraically closed field of characteristic zero, up to equivariant birationality. This is part of a long-standing program to identify finite subgroups of the Cremona group $\mathrm{Cr}_3$ 
(see, e.g., \cite{Pro-ICM} for background and references concerning this problem). One of the main tools in this area of research is the Equivariant Minimal Model Program (EMMP), and in particular the study of {\em birational rigidity} (BR). Among the principal achievements is the classification of finite simple groups that can act on rationally connected threefolds \cite{ProSimple}. There is a wealth of results towards distinguishing conjugacy classes of embeddings of simple groups into the Cremona group, e.g., $\fA_5$ (see \cite{CS}).
There are also many interesting problems: even the classification of involutions in $\Cr_3$ is still open \cite{Pro-inv}. 

There are two particularly intriguing examples of rational threefolds with large automorphisms: the 
 {\em Segre cubic} $X_3\subset \bP^4\subset \bP^5$, given by 
\begin{equation}
\label{eqn:segre}
\sum_{i=1}^6 x_i^3 = \sum_{i=1}^6 x_i =0, 
\end{equation}
with $\Aut(X_3)=\fS_6$, acting via permutation of variables, and   
the {\em Burkhardt quartic} $X_4$ that can be defined in $\bP^4\subset \bP^5$ by the vanishing of elementary symmetric polynomials in $6$ variables of degree $1$ and $4$
\begin{equation}
\label{eqn:quart}
\sum_{1\le i<j<k<l\le 6} x_ix_jx_kx_l = \sum_{i=1}^6 x_i =0, 
\end{equation}
and as such carries the action of $\fS_6$. 
However, the full automorphism group of the Burkhardt quartic is $\mathsf{PSp}_4(\bF_3)$, of order 25920.
Another standard form of the Burkhardt quartic is 
\begin{equation}
\label{eqn:burg}
\big\{y_1(y_1^3+y_2^3+y_3^3+y_4^3+y_5^3)+3y_2y_3y_4y_5=0\big\}\subset\mathbb{P}^4,
\end{equation}
which we will often use in this paper.

Our goal is to identify subgroups in $\fS_6$ and $\mathsf{PSp}_4(\bF_3)$ 
whose actions on $X_3$ and $X_4$ are (projectively) {\em nonlinearizable}, i.e., not equivariantly birational to linear (or projectively linear) actions on~$\mathbb{P}^3$.
To do this, we explore the range of applicability of group cohomology, birational rigidity, and the Burnside group formalism \cite{BnG}.

\

Our main results are:

\begin{theon}[Theorem~\ref{thm:segrelin}]\label{theo:mainsegre}
Let $G\subseteq \fS_6=\Aut(X_3)$. 
The $G$-action on $X_3$ is linearizable if and only if
one of the following conditions holds:
\begin{itemize}
\item $G$ fixes a singular point on $X_3$, 
\item $G$ is contained in the nonstandard $\fS_5'$, up to conjugation,  
\item $G=C_2^2$, $X_3$ contains three $G$-invariant planes, and $\mathrm{Sing}(X)$ splits as a union of five $C_2^2$-orbits of length $2$.
\end{itemize}
Moreover, when the $G$-action is not linearizable, it is not stably linearizable.
\end{theon}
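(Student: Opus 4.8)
The plan is to reduce the statement to a finite verification over the conjugacy classes of subgroups $G \subseteq \fS_6$. For each class one either exhibits an explicit $G$-equivariant birational map to a (projectively) linear action on $\bP^3$, or produces an obstruction in a stable $G$-birational invariant. Since linearizability implies stable linearizability, a single such obstruction simultaneously rules out linearizability and establishes the ``moreover'' clause; the three listed conditions are then established by direct construction. I would organize everything around the geometry of $X_3$: its ten nodes, indexed by the partitions of $\{1,\dots,6\}$ into two triples, and its fifteen planes, indexed by the partitions into three pairs, with the induced $\fS_6$-action on both configurations. The first step is to tabulate, for each $G$, the orbit structure on nodes and planes together with the locus of $G$-fixed points, since these data drive both the constructions and the invariants.

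For sufficiency I would treat the three cases separately. If $G$ fixes a node, then projection from that node gives a $G$-equivariant birational map $X_3 \dashrightarrow \bP^3$ onto the projectivized tangent-space representation at the node; since the node is an ordinary double point, the projection is birational and the target action is projectively linear. For the nonstandard $\fS_5'$, which acts transitively on the six coordinates and fixes no node, I would construct a distinguished $\fS_5'$-invariant birational model --- via an equivariant Sarkisov-type link or an explicit blow-up/blow-down chain --- terminating in a linear $\bP^3$; this is the most construction-heavy positive case. The $C_2^2$ case I would handle using the three invariant planes and the splitting of the ten nodes into five orbits of length two to build the birational map explicitly, verifying that precisely this configuration (and not the other $C_2^2$-configurations on $X_3$) admits such a map.

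For necessity and non-stable-linearizability I would combine two families of obstructions. The group-cohomological invariant $H^1(G,\Pic(\widetilde{X_3}))$, computed on a smooth $G$-equivariant model, is a stable $G$-birational invariant and already eliminates many subgroups; where it vanishes I would compute the class of the $G$-action in the Burnside group $\Burn_3(G)$ of \cite{BnG}, extracting its residual symbol contributions from the fixed loci of abelian subgroups, and compare it with the classes of all projectively linear $G$-actions on $\bP^3$. A nonzero residual class obstructs stable linearizability, hence a fortiori linearizability. Where both invariants are delicate I would invoke $G$-birational rigidity of $X_3$, using the EMMP to constrain the possible $G$-Mori fiber space outputs, so that the combined obstructions leave exactly the three families as linearizable.

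The hardest part will be the boundary case $G = C_2^2$. Since $\fS_6$ contains several conjugacy classes of Klein four-groups with genuinely different actions on the nodes and planes of $X_3$, the argument must both construct the birational map for the single linearizable configuration and verify, through a sufficiently sharp computation of the Burnside class, that every other $C_2^2$-action carries a nonvanishing obstruction. Making the invariant precise enough to separate these nearby cases --- and ensuring the sufficiency construction genuinely lands on a linear action rather than on some other Mori fiber space --- is where the real work lies, with the transitive $\fS_5'$ construction a close second.
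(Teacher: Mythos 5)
Your overall architecture (case analysis over conjugacy classes, projection from a fixed node, a Sarkisov link for the nonstandard $\fS_5'$, cohomology plus rigidity for obstructions) matches the paper's, but there are two genuine gaps in the necessity direction. First, you propose to rule out \emph{stable} linearizability by computing the class in the Burnside group and comparing with linear actions, asserting that ``a nonzero residual class obstructs stable linearizability.'' This is false: the Burnside class is a $G$-birational invariant in a fixed dimension, not a stable one (cf.\ Remark~\ref{rem:bbur}, where the Burnside obstruction dies on $X_4\times\bP^1$), and the paper explicitly records that the Burnside formalism \emph{does not} decide the two critical $C_2^2$ actions at the bottom of its subgroup diagram. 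Second, the groups $W_1\simeq C_2\times C_4$ and $W_2\simeq C_2^3$ (and everything containing them but not $U_1$) satisfy $\rH^1(G',\Pic(\widetilde{X}_3))=0$ for all subgroups $G'$, so your $\rH^1$ invariant sees nothing there; the paper needs the strictly stronger condition that $\Pic$ be a \emph{stably permutation} module ({\bf(SP)}), which is what actually kills these cases and delivers the ``moreover'' clause.

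The missing structural idea is Proposition~\ref{prop:x3plane}: when $X_3$ contains a $G$-invariant plane, unprojection gives a $G$-equivariant birational map to the toric intersection of two quadrics $X_{2,2}\subset\bP^5$, with $G$ preserving the torus and fixing a point in it. This reduces everything to translation-free actions of finite subgroups of $\GL_3(\bZ)$ on $\bG_m^3$, where Kunyavski\u{\i}'s analysis supplies both the {\bf(SP)} obstructions for $W_1,W_2$ and the explicit linearization of the exceptional $C_2^2$ (it becomes conjugate to a subgroup of $C_2\times\fS_4$ acting on $\bP^1\times\bP^1\times\bP^1$, linearized by the substitution $y_i=(1-x_i)/(1+x_i)$). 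When there is no invariant plane, no fixed node, and $G\not\subseteq\fS_5'$, the paper shows $G$ contains the subgroup $U_1\simeq C_2^2$ with nontrivial $\rH^1$, settling those cases. Without the toric reduction and the {\bf(SP)} criterion, your plan cannot separate the linearizable $C_2^2$ from its neighbours nor establish stable non-linearizability where $\rH^1$ vanishes.
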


There are $55$ conjugacy classes of nontrivial subgroups $G\subseteq \fS_6$, and $19$ of these give rise to
nonlinearizable actions on $X_3$.

\begin{theon}\label{theo:mainburk}
Let $G\subseteq \mathsf{PSp}_4(\bF_3)=\Aut(X_4)$. 
The $G$-action on $X_4$ is nonlinearizable if at least one of the following conditions holds:
\begin{enumerate}
\item $\mathrm{rk}\mathrm{Cl}(X_4)^G=1$,
\item $G$ contains an involution that swaps two coordinates in $\mathbb{P}^5$,  
\item $G$ contains a subgroup $G'$ such that $\rH^1(G', \Pic(\widetilde{X}_4))\ne 0$, 
\end{enumerate}
where $\widetilde{X}_4$ is the standard resolution of $X_4$.
\end{theon}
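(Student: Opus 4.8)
The three conditions are sufficient obstructions of independent character, so the plan is to treat them separately, relying throughout on one elementary but decisive reduction: if $G$ acts linearizably on $X_4$, then so does every subgroup $G'\subseteq G$, since a linear action on $\mathbb{P}^3$ restricts to a linear $G'$-action, and the same holds for stable linearizability after multiplying by a trivial factor. Contrapositively, producing a single nonlinearizable subgroup forces nonlinearizability of $G$. Conditions (2) and (3) will be established by exhibiting such a subgroup, whereas condition (1) is a statement about $G$ itself.

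For condition (3) the plan is to invoke the standard cohomological obstruction. First I would recall that for smooth projective $G$-varieties the group $\rH^1(G,\Pic)$ is a $G$-birational invariant: by $G$-equivariant weak factorization it suffices to check invariance under the blow-up of a smooth $G$-invariant centre, where the exceptional classes split off a permutation summand, giving a $G$-module isomorphism $\Pic(\mathrm{Bl})\cong \Pic\oplus\bigoplus_i\bZ[G/H_i]$; since $\rH^1(H,\bZ)=\Hom(H,\bZ)=0$ for finite $H$, Shapiro's lemma makes each summand cohomologically trivial, so $\rH^1(G,\Pic)$ is unchanged. A linear action on $\mathbb{P}^3$ has $\Pic(\mathbb{P}^3)=\bZ$ with trivial action and $\rH^1(G,\bZ)=0$, so any $G'$ with $\rH^1(G',\Pic(\widetilde{X}_4))\ne 0$ acts nonlinearizably on the smooth model $\widetilde{X}_4$, hence on $X_4$; the reduction principle then yields nonlinearizability of $G$.

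For condition (2) let $\tau\in G$ be the involution swapping $x_1$ and $x_2$ and set $G'=\langle\tau\rangle\cong C_2$. The plan is to compute the class of this $C_2$-action in the equivariant Burnside group $\Burn_3(C_2)$ via the formalism of \cite{BnG}: the class is assembled from the connected components of the fixed locus of $\tau$ on a smooth model, each weighted by the residual action and the $\tau$-weights on its normal bundle. Here the fixed locus of $\tau$ on $X_4$ consists of the quartic surface $S=X_4\cap\{x_1=x_2\}$ together with an isolated fixed point, so I would identify the birational type of $S$ and its induced symbol, and then compare the resulting class with the classes of all linear involutions on $\mathbb{P}^3$, whose fixed loci are unions of linear subspaces. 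Showing that no match occurs proves that $\tau$ is not stably linearizable, and the reduction principle gives nonlinearizability of $G$ (indeed, non-stable-linearizability).

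Condition (1) is where I expect the real work. Here $\mathrm{rk}\,\mathrm{Cl}(X_4)^G=1$ means $X_4$ is a $G\bQ$-Fano threefold of $G$-invariant Picard rank one, that is, a $G$-Mori fibre space over a point, so linearizability would force a $G$-birational map to $\mathbb{P}^3$. The plan is to run the $G$-equivariant Noether--Fano method: assuming such a map exists, pull back $|\mathcal{O}_{\mathbb{P}^3}(1)|$ to a $G$-invariant mobile system $\mathcal{M}$, necessarily a multiple of the positive generator of $\mathrm{Cl}(X_4)^G$, and deduce that the associated pair has a maximal centre of non-canonicity. The decisive and most delicate step is the exclusion of all such centres on the singular quartic: one must rule out the $45$ nodes, and all eligible curves and points, exploiting that centres occur in $G$-orbits whose lengths are incompatible with the multiplicity and degree inequalities forced by $\mathcal{M}$ and by rank one. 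The large order of $\mathsf{PSp}_4(\bF_3)$ makes the relevant orbits long, which is what I would use to close the argument; carrying out this exclusion for each rank-one subgroup $G$ is the principal obstacle.
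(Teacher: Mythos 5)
Your treatment of conditions (2) and (3) matches the paper's. For (3) the paper simply invokes the standard {\bf (H1)} obstruction (citing Calegari's tables for the actual cohomology computations), and your justification via equivariant weak factorization and Shapiro's lemma is the correct underlying argument. For (2) the paper likewise passes to the $C_2$ generated by the transposition, observes that its fixed divisor on $X_4$ is a quartic K3 surface with $12$ nodes, and notes that the resulting symbol $(C_2, Y\actsfromleft k(D),(1))$ is absolutely incompressible because $D$ is not uniruled, hence cannot occur for a (projectively) linear action whose fixed divisors are rational; this is exactly your plan. One correction here: your parenthetical claim that this also gives non-\emph{stable}-linearizability is wrong. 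The incompressible-symbol invariant is a birational, not a stable birational, invariant, and the paper explicitly points out (Remark~\ref{rem:bbur}) that this obstruction vanishes for $X_4\times\bP^1$ and even speculates that the $C_2$-action on $X_4\times\bP^1$ \emph{is} linearizable. Since the theorem only asserts nonlinearizability, this does not damage your proof of the statement, but the stronger claim should be deleted.

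The genuine gap is in condition (1). You correctly identify the Noether--Fano framework (the paper proves the stronger statement that $X_4$ is $G$-birationally super-rigid, Proposition~\ref{thm:Burkhard-BSR}), but the entire content of that proof is the exclusion of maximal centres, which you defer and for which your hinted mechanism --- incompatibility of $G$-orbit lengths with multiplicity and degree bounds, relying on the ``large order of $\mathsf{PSp}_4(\bF_3)$'' --- is not the one that works and would likely fail. The minimal subgroups with $\mathrm{rk}\,\mathrm{Cl}(X_4)^G=1$ listed in Proposition~\ref{prop:inv-class} include groups of order $16$, $20$ and $24$ (e.g.\ $C_2^4$, $\mathfrak F_5$, $\fS_4$), which can fix a node of $X_4$ outright, so no orbit-length argument excludes singular points as centres. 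The paper's exclusion is in fact entirely group-free: smooth points are excluded by the local intersection bound $(M_1\cdot M_2)_Z>4n^2$ against $(-K_{X_4})^3=4$; a node is excluded by restricting to a general line in a Jacobi plane through it, giving $0\le \widetilde M\cdot\widetilde L=n-a<0$; and curves (necessarily of degree $\le 3$) are excluded by showing, via the combinatorics of the $40$ Jacobi planes and $40$ tetrahedra (Lemmas~\ref{lemma:tetrahedra}--\ref{lemma:twisted-cubics}, the first resting on a computer check), that every such curve meets a Jacobi plane in a smooth point of $X_4$, whence a multiplicity contradiction along lines in that plane. Without this specific geometric input --- or some substitute of comparable precision --- your argument for condition (1) does not close.
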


This gives nonlinearizability for $103$ out of $115$ conjugacy classes of nontrivial subgroups of $\mathsf{PSp}_4(\bF_3)$.

\begin{theon}
\label{theo:mainburkk}
Among the remaining $12$ conjugacy classes of subgroups 
$G\subset \mathsf{PSp}_4(\bF_3)$, the $G$-action is linearizable if $G$ 
is conjugate to one of the subgroups 
$$
C_2, C_3, C_3', C_2^2, C_4, C_5, C_6, C_9,
$$
explicitly described in Section~\ref{sect:burk}.
\end{theon}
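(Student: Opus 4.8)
The plan is to establish linearizability separately for each of the eight subgroups by exhibiting, in every case, a $G$-equivariant birational map from $X_4$ to $\bP^3$ that carries the $G$-action to a projectively linear one. Working in the model \eqref{eqn:burg}, I would first fix explicit generators of each $G\subset \mathsf{PSp}_4(\bF_3)$ as coordinate transformations of $y_1,\dots,y_5$, as prepared in Section~\ref{sect:burk}, and record the eigenvalue (weight) data of these generators. This is not idle bookkeeping: the goal of the final step is precisely to match these weights against those of a genuine linear representation on $\bP^3$, so the two ends of the argument are set up to meet.

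For the geometric input I would locate, for each $G$, a $G$-invariant rationality construction. The most useful such structures on $X_4$ are (i) $G$-fixed smooth points, (ii) $G$-invariant lines and planes drawn from the special configuration of the $45$ nodes and the associated special planes, and (iii) the cubic-surface (degree-$3$ del Pezzo) fibrations obtained by projecting $X_4$ from a plane $\Pi\subset X_4$: each hyperplane through $\Pi$ cuts out $\Pi$ together with a residual cubic surface, giving a $G$-equivariant fibration over $\bP^1$ whenever $\Pi$ is $G$-invariant. When $G$ fixes a plane I would use this fibration, produce a $G$-invariant section, and trivialize it; when $G$ fixes a smooth point I would instead work with the tangent representation and an equivariant projection/parametrization. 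The non-cyclic case $C_2^2$ would be treated the same way, since it too acts with invariant subspaces in this configuration.

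Once an equivariant birational map to some $\bP^3$ is in hand, the final step is to verify that the transported $G$-action is projectively linear, which I would do by computing the action on the chosen invariant $\bP^3$ and comparing its character with the weight data recorded at the start; for cyclic $G$ this comparison collapses to matching the eigenvalues of a single generator.

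The main obstacle is that the Burkhardt quartic is \emph{not} rationalized by naive projection from a node: since a quartic has multiplicity $2$ at a node, such a projection is only $2{:}1$, so one cannot simply project from a $G$-fixed singular point. The crux is therefore to produce a genuinely birational equivariant construction (via invariant planes and del Pezzo fibrations, or via the moduli-of-abelian-surfaces description of $X_4$) and, harder still, to certify that the resulting birational $G$-action on $\bP^3$ is linear rather than merely rational. For the groups admitting no convenient smooth fixed point or invariant plane, I expect to fall back on $G$-invariant conic-bundle structures together with the no-name lemma, or on an equivariant Minimal Model Program whose output is an honest linear $\bP^3$.
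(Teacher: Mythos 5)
There is a genuine gap: what you have written is a research plan, not a proof, and several of its concrete steps would fail as stated. First, projecting $X_4$ from a $G$-fixed \emph{smooth} point is generically $3{:}1$ onto $\bP^3$ (degree $4$ minus multiplicity $1$), so it is no more birational than projection from a node; the "tangent representation plus equivariant projection" step does not produce a birational map. Second, the degree-$3$ Del Pezzo fibration obtained by projecting from a $G$-invariant plane is not "trivialized" by a section: cubic surface fibrations over $\bP^1$ with a section are in general very far from being (equivariantly) birational to $\bP^1\times\bP^2$ or $\bP^3$, and you give no argument for why the specific fibrations arising here would be. Third, for $C_9$ and $C_5$ you offer no mechanism at all; the paper has to invoke the classical Baker and Todd parametrizations, i.e.\ rigid configurations of $9$ (resp.\ $10$) lines in $\bP^3$ whose linear systems of quartic surfaces map $\bP^3$ birationally onto $X_4$, with symmetry group exactly $C_9$ (resp.\ $C_5$). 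Nothing in your outline would lead you to these.

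The remaining cases in the paper are also built from constructions your plan does not anticipate. For $C_6$ (hence $C_2$, $C_3'$) the paper takes the mobile linear system $|4(-K_{X_4})-D_8|$, where $D_8$ is an explicit $G$-invariant sum of eight Jacobi planes, and shows it maps $X_4$ birationally onto a quadric \emph{cone} $Q\subset\bP^4$ on which $G$ has a fixed point away from the vertex; projection from that point linearizes. For $C_2^2$ one must upgrade this to a $\fD_6$-equivariant chain of Sarkisov links ending on a smooth quadric threefold with a $C_2^2$-fixed point. For $C_4$ the paper uses the product of the four projections from a $G$-\emph{orbit} of Jacobi planes (not a single invariant plane), landing in a nodal divisor in $(\bP^1)^4$, followed by a $G\bQ$-factorialization, a map to a singular $X_{2,2}\subset\bP^5$ with a $G$-fixed node, and projection from that node to a smooth quadric with a $G$-fixed point. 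Your general toolkit (invariant planes, conic bundles, no-name lemma, EMMP) points in the right direction in spirit, and you correctly flag that node projection is $2{:}1$, but the actual content of the theorem lies entirely in finding these case-specific birational models, none of which is supplied or made plausible by the proposal.
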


See \cite{CTZ-tables} for additional information concerning the actions of the groups in Theorems~\ref{theo:mainsegre}, \ref{theo:mainburk}, \ref{theo:mainburkk}.

Thus, we settle completely the (stable) linearizability problem for the Segre cubic threefold $X_3$. After excluding known cases when either there is a $G$-fixed singular point in $X_3$, and the action is linearizable via projection from this point, or $G$ is conjugated to a subgroup of the nonstandard $\fS_5$ in $\fS_6$, and such actions have been treated in \cite{Avilov,HT-torsor}, the remaining analysis hinges on the existence of $G$-stable planes. 
A  key observation in the remaining cases
is that if $X_3$ does {\em not} contain a $G$-stable plane, 
then there is a cohomological obstruction to stable linearizability, and if it does contain such a plane, then $X_3$ is birational to a singular, toric, intersection of two quadrics in $\bP^5$, which can be analyzed via toric geometry. 

As an auxiliary tool, we settle the (stable) linearizability problem for translation-free actions with fixed points 
on algebraic tori in dimension $3$, in Theorem~\ref{thm:lin-tori}. 

For the Burkhardt quartic $X_4$, we settle the linearizability problem for all finite subgroups of $\mathsf{PSp}_4(\bF_3)$
except for $4$ (conjugacy classes of) subgroups isomorphic to 
$$
\mathfrak{S}_3,\,\fD_5, \,\fD_6, \, C_3\rtimes C_4,
$$
which are described explicitly in Section~\ref{sect:burk}. 
We do not know whether or not the actions of these $4$ groups are linearizable. 
For the subgroups $\mathfrak{S}_3$ and $\fD_6$ we present an equivariant birational map from $X_4$ to a smooth quadric threefold,
see Section~\ref{sect:burk}.

The proofs of Theorems~\ref{theo:mainsegre}, \ref{theo:mainburk}, and \ref{theo:mainburkk} imply the following corollary.

\begin{corol}
\label{corol:main}
Let $G$ be a group acting faithfully on $X_3$ and $X_4$ and such that 
both actions are not linearizable. 
Then there is no $G$-equivariant birational map $X_3\dasharrow X_4$, 
with the possible exception when $G\simeq C_2^2$, 
conjugate to 
$$
\big\langle(1\ 2)(3\ 4),(1\ 2)(5\ 6)\big\rangle\subset \fS_6=\mathrm{Aut}(X_3),
$$
and the corresponding subgroup in $\mathrm{Aut}(X_4)\subset \PGL_5$ generated by 
$$
\begin{pmatrix}
-1 & 2 & 2 & 2& 2\\
1 & 1 & -2 & 1 & 1\\
1 & -2 & 1 & 1 & 1\\
1 & 1 & 1 & 1 & -2\\
1 & 1 & 1 & -2 & 1
\end{pmatrix},
\begin{pmatrix}
1 & 0 & 0 & 0 & 0\\
0 & 0 & 1 & 0 & 0\\
0 & 1& 0& 0& 0\\
0 & 0 & 0 &0&1\\
0 &0 &0 &1 &0
\end{pmatrix}.
$$    
\end{corol}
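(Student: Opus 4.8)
The plan is to argue by cross-referencing the two classifications and then separating the surviving candidates with $G$-equivariant birational invariants. First I would tabulate, using Theorem~\ref{theo:mainsegre} together with Theorems~\ref{theo:mainburk} and~\ref{theo:mainburkk} and the data in \cite{CTZ-tables}, the abstract isomorphism types of groups $G$ that admit a faithful nonlinearizable action on $X_3$ (one of the $19$ classes in $\fS_6$) and \emph{simultaneously} a faithful nonlinearizable action on $X_4$ (among the classes in $\mathsf{PSp}_4(\bF_3)$ not listed in Theorem~\ref{theo:mainburkk}). Since any $G$-equivariant birational map $X_3\dasharrow X_4$ identifies the two $G$-actions up to equivariant birationality, and linearizability is such an invariant, only these common types can possibly be connected; the problem thus reduces to a finite, explicit list of groups, which I would extract mechanically from the tables.

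Second, for each candidate $G$ on this list I would produce a $G$-equivariant birational invariant whose value on $X_3$ differs from its value on $X_4$. The cleanest such invariant is the class of the action in the equivariant Burnside group $\Burn_3(G)$ of \cite{BnG}: a $G$-equivariant birationality would force the classes of $X_3$ and $X_4$ to coincide in $\Burn_3(G)$, so it suffices to compare the incompressible symbols coming from the $G$-stabilized divisors and curves on the standard resolutions $\widetilde{X}_3$ and $\widetilde{X}_4$. For the many groups falling under condition~(1) of Theorem~\ref{theo:mainburk} I would instead invoke $G$-birational rigidity: when $\mathrm{rk}\,\mathrm{Cl}(X_4)^G=1$ the quartic $X_4$ is $G$-birationally rigid, and since $X_3$ and $X_4$ are non-isomorphic Fano threefolds this already precludes a $G$-equivariant birational map between them. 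Where these coarser invariants coincide, the cohomological obstruction $\rH^1(G',\Pic(\widetilde{X}))\ne 0$ of condition~(3), evaluated on both sides, provides the separation.

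Third, I would isolate the exceptional class $G\simeq C_2^2$. For $G=\big\langle(1\ 2)(3\ 4),(1\ 2)(5\ 6)\big\rangle$ the cubic $X_3$ falls into the third bullet of Theorem~\ref{theo:mainsegre}: it carries three $G$-invariant planes and its singular locus splits into five $C_2^2$-orbits of length $2$. I would then check that the corresponding $C_2^2\subset\mathsf{PSp}_4(\bF_3)$ generated by the two displayed matrices produces a configuration of $G$-stabilized loci on $X_4$ compatible with that on $X_3$, so that the relevant Burnside symbols and the cohomology groups $\rH^1$ agree on both sides. Consequently none of our obstructions separates the two actions, and the corollary records this case as a \emph{possible} exception rather than a proven one.

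The main obstacle is precisely this last step, together with the borderline candidates where the easy invariants agree: one must carry out the full symbol computation in $\Burn_3(C_2^2)$ on both $\widetilde{X}_3$ and $\widetilde{X}_4$ and verify that the residual classes genuinely coincide, rather than merely failing to be distinguished by a crude invariant. Establishing that every other common group type is separated while no available birational obstruction separates this single $C_2^2$-action is the delicate heart of the argument.
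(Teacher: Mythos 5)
Your proposal is essentially the paper's own argument: the authors give no standalone proof, deriving the corollary directly from the fact that the obstructions deployed in Theorems~\ref{theo:mainsegre}--\ref{theo:mainburkk} (equivariant birational super-rigidity, the cohomology groups $\rH^1(G',\Pic)$ of smooth models, and the incompressible divisorial symbols of Section~\ref{sect:incomp}, e.g.\ Proposition~\ref{prop:burk}) are $G$-birational invariants that separate every pair of simultaneously nonlinearizable actions except the displayed $C_2^2$, for which none of the available invariants distinguishes the two sides. The only point to tighten is your rigidity step: super-rigidity of $X_4$ rules out $G$-birationality to \emph{other $G$-Mori fiber spaces}, so when $\mathrm{rk}\,\mathrm{Cl}(X_3)^G>1$ you must additionally observe that any $G$-Mori fiber space produced from $X_3$ by the EMMP cannot be $X_4$ (e.g.\ by comparing anticanonical degrees), rather than appealing only to $X_3\not\simeq X_4$.
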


It would be interesting to clarify what happens in this exceptional case. 

\

Here is the roadmap of the paper: in Section~\ref{sect:gen} we recall basic notions and constructions in equivariant birational geometry, and tabulate results of computations for the Segre cubic and the Burkhardt quartic. In Section~\ref{sect:bir} we recall the main  tools from equivariant birational rigidity. Section~\ref{sect:incomp} presents a simplified version of the Burnside formalism from \cite{BnG}, based on incompressible symbols, that allows to prove new cases of nonlinearizability.
In Section~\ref{sect:tori} we adopt Kunyavski's  
rationality analysis of 3-dimensional tori over nonclosed fields \cite{kun3} to the equivariant context. 
In Section~\ref{sect:segre} we turn to the Segre cubic threefold; we address the linearizability of the Burkhardt quartic in Section~\ref{sect:burk}. 
In Section~\ref{sect:Burk-rigid}, we prove that the Burkhardt quartic $X_4$ is $G$-birationally rigid if 
$G\subseteq \mathsf{PSp}_4(\bF_3)$ satisfies $\mathrm{rk}\,\mathrm{Cl}(X_4)^G=1$. 

\

Throughout, we consider group actions from the right, by ${\tt Magma}$ conventions; $C_n$ denotes cyclic groups of order $n$; $\fD_n$ denotes dihedral groups of order $2n$; $\fA_n$ and $\fS_n$ denote alternating groups and symmetric groups of degree $n$, respectively. 

\

\noindent
{\bf Acknowledgments:} 
The first author was partially supported by the Leverhulme Trust grant RPG-2021-229.
The second author was partially supported by NSF grant 2000099.

\section{Background}
\label{sect:gen}

\subsection*{Linearizability}

Let $G$ be a group and $X$ a $G$-variety, i.e., an algebraic variety with a generically free action of $G$. 
We are interested in the following properties of $G$-actions: 
\begin{itemize}
\item {\em Linearizability:} $X$ is $G$-equivariantly 
birational to $\bP(V)$, the projectivization of a linear representation $V$ of $G$;
\item {\em Stable linearizability:} $X\times \bP^m$, with trivial $G$-action on the second factor, is linearizable.
\end{itemize}
One may also consider the related notions of (stable) {\em projective} linearizability, where the $G$-action on $X$ is compared to the $G$-action on 
$\bP(V)$, the projectivization of a representation $V$ of a {\em central extension} of $G$. Our focus in this paper is on linearizability, since
projectively linear actions on the Segre cubic and the Burkhardt quartic are linear.

Equivariant resolution of singularities (over fields of characteristic zero) allows to reduce to the case when $X$ is smooth.

\subsection*{Obstructions}
Let $X$ be a smooth projective $G$-variety, over an algebraically closed field of characteristic zero. The geometric action induces an action on invariants of $X$, such as the Picard group $\Pic(X)$.
Among necessary conditions for stable linearizability of the $G$-action on $X$ is
\begin{itemize}
\item[{\bf(SP)}] $\Pic(X)$ is a stably permutation $G$-module.
\end{itemize}
This condition is not easy to verify, in practice. On the other hand, it implies the more tractable condition
\begin{itemize}
\item[{\bf (H1)}] 
$\rH^1(G', \Pic(X)) = \rH^1(G', \Pic(X)^\vee)=0, \quad \forall G'\subseteq G.$
\end{itemize}
This can be checked in {\tt Magma}, when the $G$-action on $\Pic(X)$ is known explicitly. 
Note that these conditions are equivalent for $G=C_2$. 

It should be pointed out that not all failures of birationality are explained by invariants of the $G$-action on Picard groups, see Section~\ref{sect:incomp} and Remark~\ref{rem:bbur}. 
For singular $G$-varieties it is also useful to consider the induced $G$-action on the class group $\mathrm{Cl}(X)$; this is particularly relevant to the study of $G$-birational rigidity, see Section~ \ref{sect:bir} and \ref{sect:Burk-rigid}. 

\begin{rema}
If $X$ is a singular $G$-variety, then $\rH^1(G, \mathrm{Cl}(X))$ is not a $G$-birational invariant. For instance, let $X=X_3$ be the Segre cubic in $\bP^5$ and $G=C_2$, acting on $X$ via swapping of two coordinates. Then $\mathrm{Cl}(X)=\bZ^6$ and $\rH^1(G,\mathrm{Cl}(X))=\bZ/2$. But $G$ fixes a singular point on $X$ and is thus linearizable.
\end{rema}

\subsection*{Rational surfaces}
Actions of finite groups on Del Pezzo surfaces have been extensively studied in \cite{DI,prokhorovII}. 
The $G$-action on Picard groups of Del Pezzo surfaces of degrees $4,3,2,1$ factors through subgroups of 
Weyl groups 
$$
W(\mathsf D_5), \quad W(\mathsf E_6), \quad W(\mathsf E_7),\quad  W(\mathsf E_8),
$$
respectively. Subgroups satisfying {\bf (H1)} have been enumerated in \cite{TY};  
the paper \cite{prokhorovII} contains examples of such subgroups of $W(\mathsf D_n)$, acting on Picard groups of conic bundles over $\bP^1$. 

Applications of the Burnside formalism from \cite{BnG} to threefolds require a detailed understanding of birationality of $G$-actions on surfaces. 

\subsection*{Segre cubic}

Let $X_3$ be the Segre cubic in $\mathbb{P}^4$.
We have the following $\fS_6$-equivariant diagram
$$
\xymatrix{
&\widetilde{X}_3\ar[ld]_{f}\ar[rd]^{g}&\\
X_3&&X_4^I}
$$
where $f$ is the blowup of the $10$ singular points of the cubic, $g$ is the anticanonical morphism, and $X_4^I$ is the Igusa quartic threefold in $\mathbb{P}^4$.
We say that $f$ is the standard resolution of singularities of $X_3$.
Recall that $\widetilde{X}_3$ is $\fS_6$-equivariantly isomorphic to $\bar{\mathcal M}_{0,6}$, the moduli space of 6 points on $\bP^1$, 
which has a natural $\fS_6$-action permuting the 6 points. 

The group $\fS_6$ has an outer automorphism, and thus two conjugacy classes of 
subgroups $\fS_5$; one of them we call {\em standard}, it acts trivially on one of the indices, and the other {\em nonstandard}; we shall denote it by~$\fS_5'$. By \cite[Proposition 4.1]{Avilov}, the action of the standard $\fS_5$ on the Segre cubic, via permutation of $5$ variables in \eqref{eqn:segre}, is birationally rigid; the action of the nonstandard~$\fS_5'$ is linearizable. 

Recall that $\Pic(\widetilde{X}_3)^{\fS_6}$ is generated by 2 classes, corresponding to the birational contractions to the Segre cubic and the Igusa quartic. The following table provides additional information about ranks of the invariant Picard group and class group, as one changes the action: 

\

{\small

\begin{center}
\renewcommand{\arraystretch}{1.3}
\begin{tabular}{c|c|c|cc|cc|c|c|c}
{\rm Group}                &  $\fS_6$ & $\fA_6$ &   $\fS_5$ &  $\fS_5'$ & $\fA_5$ & $\fA_5'$ & $\fS_3\wr C_2$ &  $C_2\times \fS_4$ & $C_2\times \fS_4''$ \\
\hline
$\mathrm{rk}\,\Pic(\widetilde{X}_3)^G$ &   2      &   2    &  2      &    3      &  2   &  3 & 3 & 3 & 4\\
\hline
$\mathrm{rk}\,\mathrm{Cl}(X_3)^G$ & 1&1&1&2&1&2&1&1&2
\end{tabular}
\end{center}
}
\

\subsection*{Burkhardt quartic}

We also record ranks of invariants in the Picard group of $\widetilde{X}_4$, the standard resolution of singularities of the Burkhardt quartic $X_4$ obtained by blowing up all its singular points, for various subgroups $G\subseteq \mathsf{PSp}_4(\bF_3)$: 

\

{\small

\begin{center}
\renewcommand{\arraystretch}{1.3}
\begin{tabular}{c|c|c|c|c|c|c}
{\rm Group}                        & $\mathsf{PSp}_4(\bF_3)$ & $C_2^4.\fA_5$   & $\fS_6$ & $C_2.\fA_4\wr C_2$  &  $\mathrm{SL}_2(\bF_3):\fA_4$ &   $\mathrm{SL}_2(\bF_3)$\\
\hline
$\mathrm{rk}\,\Pic(\widetilde{X}_4)^G$ &     2                    &    3           &    3   &   4      & 5 &  7,9,11\\
\hline
$\mathrm{rk}\,\mathrm{Cl}(X_4)^G$ & 1&1&1&1&2&3,3,5
\end{tabular}
\end{center}
}

\

\noindent
where the last entry reflects the different conjugacy classes. The full table, obtained with {\tt Magma}, is available at \cite{CTZ-tables}. Furthermore, we have:

\begin{prop}
\label{prop:inv-class}
Let $X_4$ be the Burkhardt quartic and $G\subseteq \mathsf{PSp}_4(\bF_3)$
a subgroup of its automorphism group. Then  $\mathrm{rk\, Cl}(X_4)^G=1$ 
if and only if $G$ contains a subgroup conjugate to 
one of the following subgroups
\begin{align*}
    \mathfrak F_5, \,\, C_2^4,\,\, C_2^2\rtimes C_4, \,\,C_2\times\fD_4,\,\,\fS_4,\,\, \fS_4',\,\,C_3\rtimes\fD_4,\,\,\fS_3^2,\,\,C_3^2\rtimes C_4,\,\,\fA_5,
\end{align*}
explicitly specified in  \cite{CTZ-tables}. 
\end{prop}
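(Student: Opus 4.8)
The plan is to reduce the statement to a monotonicity property of invariant ranks combined with an explicit computation. The crucial observation is that $\mathrm{rk}\,\mathrm{Cl}(X_4)^G\ge 1$ for \emph{every} subgroup $G$: since $X_4$ is a quartic in $\bP^4$, adjunction gives $-K_{X_4}=H|_{X_4}$, the hyperplane class, which is fixed by all of $\mathsf{PSp}_4(\bF_3)\subset\PGL_5$ and is nonzero in $\mathrm{Cl}(X_4)$. Moreover, for $H\subseteq G$ one has $\mathrm{Cl}(X_4)^G\subseteq\mathrm{Cl}(X_4)^H$, hence $\mathrm{rk}\,\mathrm{Cl}(X_4)^G\le\mathrm{rk}\,\mathrm{Cl}(X_4)^H$. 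Combining these two facts, the set of subgroups $G$ with $\mathrm{rk}\,\mathrm{Cl}(X_4)^G=1$ is closed under passing to overgroups; as an up-set in the conjugacy poset of subgroups it is determined by its minimal members, and the proposition asserts precisely that these minimal members are the ten listed groups, up to conjugacy.

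First I would dispose of the ``if'' direction, which is then immediate: if $G$ contains a conjugate $H$ of one of the ten groups and $\mathrm{rk}\,\mathrm{Cl}(X_4)^H=1$, then $1\le\mathrm{rk}\,\mathrm{Cl}(X_4)^G\le\mathrm{rk}\,\mathrm{Cl}(X_4)^H=1$. So it suffices to verify, once and for all, that each of $\mathfrak F_5,\,C_2^4,\,C_2^2\rtimes C_4,\,C_2\times\fD_4,\,\fS_4,\,\fS_4',\,C_3\rtimes\fD_4,\,\fS_3^2,\,C_3^2\rtimes C_4,\,\fA_5$ has one-dimensional invariants, which is a direct computation once the module is fixed.

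For the ``only if'' direction I would make the $G$-module $\mathrm{Cl}(X_4)$ completely explicit. This comes from the standard resolution $\widetilde X_4\to X_4$ obtained by blowing up the $45$ nodes: there is a $G$-equivariant presentation of $\mathrm{Cl}(X_4)$ as the quotient of $\Pic(\widetilde X_4)$ by the subgroup spanned by the exceptional divisors, so that the action is governed by the classically known permutation action of $\mathsf{PSp}_4(\bF_3)$ on the configuration of $45$ nodes and $40$ planes recorded in \cite{CTZ-tables}. With this lattice in hand, $\mathrm{rk}\,\mathrm{Cl}(X_4)^G=\dim_\bQ(\mathrm{Cl}(X_4)\otimes\bQ)^G$ is the multiplicity of the trivial summand, computable as $\frac{1}{|G|}\sum_{g\in G}\mathrm{tr}(g\mid\mathrm{Cl}(X_4)\otimes\bQ)$ from the character of the representation (invariants being exact over $\bQ$). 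Running this over all conjugacy classes of subgroups in {\tt Magma}, I would extract those of rank $1$ and then, using the containment relations up to conjugacy, identify the minimal ones, which should turn out to be exactly the ten listed groups.

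The main obstacle is the completeness and minimality bookkeeping in the ``only if'' direction. Exhibiting ten rank-$1$ subgroups is easy; the work is in showing they \emph{exhaust} the minimal elements of the up-set, i.e. that every subgroup with $\mathrm{rk}\,\mathrm{Cl}(X_4)^G=1$ contains a conjugate of one of them, and conversely that none of the ten contains a conjugate of a proper rank-$1$ subgroup. Because $\mathsf{PSp}_4(\bF_3)$ has over a hundred conjugacy classes of subgroups with an intricate containment poset, this is a finite but delicate computation whose correctness rests entirely on the explicit $\mathrm{Cl}(X_4)$-module and on faithfully tracking conjugacy and containment; the conceptual content beyond the up-set structure is slight, but the verification is where essentially all the effort lies.
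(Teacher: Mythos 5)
Your proposal is correct and matches the paper's approach: the paper offers no argument beyond the explicit {\tt Magma} computation of the $\mathsf{PSp}_4(\bF_3)$-module $\mathrm{Cl}(X_4)$ (via the configuration of the $45$ nodes and $40$ Jacobi planes) over all conjugacy classes of subgroups, recorded in \cite{CTZ-tables}, which is exactly the verification you describe. Your preliminary observations --- that the hyperplane class forces $\mathrm{rk}\,\mathrm{Cl}(X_4)^G\ge 1$ and that monotonicity of invariants makes the rank-one subgroups an up-set determined by its minimal members --- are correct and a clean way to organize the bookkeeping, but the substance is the same finite computation the paper relies on.
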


This extends \cite[Corollary 2.10]{Ch-Burk} and \cite[Corollary 5.4]{CKS}, which listed the corresponding subgroups of $\fS_6$.

\section{Birational rigidity}
\label{sect:bir}

Let $X$ be a Fano threefold with at most terminal singularities and $G\subseteq \mathrm{Aut}(X)$ a finite subgroup. Suppose that $$
\mathrm{rk}\,\mathrm{Pic}^G(X)=1.
$$
If $X$ is smooth, then $X$ is a $G$-Mori fiber space (over a point), and $X$ lies in $25$ deformation families described in \cite[Theorem~1.2]{Prokhorov2013}.
If $X$ is singular, then it may fail to be $G\mathbb{Q}$-factorial, i.e., we may have
$$
\mathrm{rk}\,\mathrm{Cl}^G(X)>\mathrm{rk}\,\mathrm{Pic}^G(X),
$$
so that $X$ is not necessarily a $G$-Mori fiber space. However, we can always take a $G\mathbb{Q}$-factorialization of $X$,
and then apply EMMP to obtain a $G$-equivariant birational map from $X$ to some $G$-Mori fiber space.

On the other hand, if $\mathrm{rk}\,\mathrm{Cl}^G(X)=1$, then  $X$ is a $G$-Mori fiber space.
In this case, one can try to describe all $G$-birational maps from $X$ to other $G$-Mori fiber spaces.
Every such map can be decomposed into a sequence of $G$-Sarkisov links \cite{Corti1995,HaconMcKernan2013},
which have a more restricted structure.
If there are no $G$-Sarkisov links that start at $X$,
then $X$ is the only  $G$-Mori fiber space that is $G$-birational to $X$ and
$$
\mathrm{Bir}^G(X)=\mathrm{Aut}^G(X),
$$
i.e., $X$ is $G$-birationally super-rigid.
We say that $X$ is $G$-birationally rigid if every $G$-Sarkisov link that starts at $X$ also ends at $X$,
which means that $X$ is not $G$-birational to other $G$-Mori fiber spaces, but $X$ may admit non-biregular $G$-birational selfmaps.

\begin{rema}
\label{rema:rigid-lin}
If $X\not\simeq\mathbb{P}^3$, $\mathrm{rk}\,\mathrm{Cl}^G(X)=1$, and $X$ is $G$-birationally rigid,
then the $G$-action on $X$ is not (projectively) linearizable.
\end{rema}

If $X$ is not $G$-birational to any $G$-Mori fiber space with a positive dimensional base (a conic bundle or a Del Pezzo fibration),
we say that $X$ is $G$-solid. $G$-birationally rigid and $G$-solid Fano threefolds are studied in \cite{CheltsovPark2010,Ch-Burk,ChS-5,CheShr,CS,CS-finite,Ch-toric,CSar,Cheltsov2023},
with a special focus on rational threefolds. These studies are based on the following technical result,
which is the engine of the~$G$-equivariant Sarkisov program:

\begin{theo}[{\cite[Theorem 3.3.1]{CS}}]
\label{theo:NFI}
Suppose that $\mathrm{rk}\,\mathrm{Cl}^G(X)=1$, and
let $\chi\colon X\dasharrow V$ be a $G$-birational non-biregular map such that 
\begin{itemize}
\item 
$V$ has terminal singularities,
\item 
$\mathrm{rk}\,\mathrm{Cl}^G(V)=\mathrm{rk}\,\mathrm{Pic}^G(V)$, and 
\item there exists a $G$-equivariant Mori fiber space $\pi\colon V\to Z$.
\end{itemize}
Set
$$
\mathcal{M}=\chi_*^{-1}\big(|-pK_V+\pi^*(H)|\big),
$$
for $p\gg 0$, and  a sufficiently general very ample divisor $H\in\mathrm{Pic}(Z)$ such that $[\pi^*(H)]$ is $G$-invariant.
Then $\mathcal{M}$ is a $G$-invariant non-empty mobile linear system,
and the singularities of the log pair $(X,\lambda\mathcal{M})$ are not canonical
for $\lambda\in\mathbb{Q}_{>0}$ such that $\lambda\mathcal{M}\sim_{\mathbb{Q}}-K_X$.
\end{theo}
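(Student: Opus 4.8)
The plan is to establish the statement as a $G$-equivariant incarnation of the classical Noether--Fano--Iskovskikh inequality, following the Sarkisov formalism of \cite{Corti1995, HaconMcKernan2013}; the only genuinely new feature is equivariance, which I would control by performing every resolution and every step of the minimal model program $G$-equivariantly (possible in characteristic zero).

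First I would check that $\mathcal{M}$ is a well-defined $G$-invariant, non-empty, mobile linear system. The class $-pK_V+\pi^*(H)$ is $G$-invariant, because $K_V$ is canonical (hence $G$-invariant) and $[\pi^*(H)]$ is $G$-invariant by hypothesis; for $p\gg 0$ this divisor is very ample, since $-K_V$ is $\pi$-ample and $H$ is ample on $Z$, so the complete system $\mathcal{N}:=|-pK_V+\pi^*(H)|$ is base-point-free, non-empty, and mobile. Its strict transform $\mathcal{M}=\chi_*^{-1}(\mathcal{N})$ under the $G$-equivariant map $\chi$ then inherits $G$-invariance and mobility. Next, since $\mathrm{rk}\,\mathrm{Cl}^G(X)=1$, the $G$-invariant part of $\mathrm{Cl}(X)\otimes\mathbb{Q}$ is spanned by $K_X$; as $[\mathcal{M}]$ is a non-trivial effective $G$-invariant class and $-K_X$ is ample, there is a unique $\lambda\in\mathbb{Q}_{>0}$ with $\lambda\mathcal{M}\sim_{\mathbb{Q}}-K_X$, i.e. $K_X+\lambda\mathcal{M}\sim_{\mathbb{Q}}0$.

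The core of the argument is by contradiction: suppose $(X,\lambda\mathcal{M})$ has canonical singularities. On the target side, for $p\gg 0$ the pair $(V,\tfrac1p\mathcal{N})$ is canonical (indeed terminal, since $V$ is terminal, $\mathcal{N}$ is base-point-free, and the boundary coefficient tends to $0$), and its log canonical divisor $K_V+\tfrac1p\mathcal{N}\sim_{\mathbb{Q}}\tfrac1p\pi^*(H)$ is nef, being pulled back from $Z$. I would then pass to a common $G$-equivariant resolution $\alpha\colon W\to X$, $\beta\colon W\to V$ of the indeterminacy of $\chi$, let $\mathcal{M}_W$ denote the common strict transform, and compare the two crepant expansions $K_W+\lambda\mathcal{M}_W\sim_{\mathbb{Q}}\alpha^*(K_X+\lambda\mathcal{M})+E_\alpha\sim_{\mathbb{Q}}E_\alpha$ and $K_W+\tfrac1p\mathcal{M}_W\sim_{\mathbb{Q}}\beta^*(K_V+\tfrac1p\mathcal{N})+E_\beta$, where $E_\alpha$ and $E_\beta$ are the effective exceptional divisors recording the non-negative discrepancies of the two canonical pairs. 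Feeding the difference of these identities into the negativity lemma, applied to $\alpha$ and to $\beta$ in turn, forces the two normalizations to coincide and $\chi$ to be an isomorphism of Mori fiber spaces in the sense of the Sarkisov program (a square), contradicting the hypothesis that $\chi$ is non-biregular. Hence $(X,\lambda\mathcal{M})$ cannot be canonical.

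The hard part will be the discrepancy comparison in this last step. Because the two sides are normalized by different coefficients ($\lambda$ on $X$, where $K_X+\lambda\mathcal{M}\sim_{\mathbb{Q}}0$, versus $\tfrac1p$ on $V$, where $K_V+\tfrac1p\mathcal{N}$ is only the pullback of an ample class from the positive-dimensional base $Z$), the negativity lemma must be applied with care to rule out the canonical case unless $\chi$ is a square. This is exactly Corti's three-dimensional Noether--Fano inequality, which I would import essentially unchanged, verifying only that each resolution, each divisorial extraction, and each contraction can be chosen $G$-equivariantly, so that all linear systems, discrepancies, and exceptional loci carry compatible $G$-actions and the whole comparison descends to $G$-invariant data.
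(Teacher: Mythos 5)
The paper does not actually prove this theorem: it is imported verbatim from \cite[Theorem~3.3.1]{CS}, so there is no internal proof to compare against. Your sketch follows the standard proof of that cited result --- the classical Noether--Fano--Iskovskikh inequality in Corti's formulation, with every resolution and contraction taken $G$-equivariantly --- so in spirit you are doing exactly what the paper (via its reference) does. The preparatory steps are correct: $-pK_V+\pi^*(H)$ gives a base-point-free $G$-invariant system whose strict transform is mobile, and $\mathrm{rk}\,\mathrm{Cl}^G(X)=1$ together with ampleness of $-K_X$ pins down a unique $\lambda\in\mathbb{Q}_{>0}$ with $K_X+\lambda\mathcal{M}\sim_{\mathbb{Q}}0$. (One small quantifier slip: for a fixed $H$ and $p\gg 0$ the divisor $-pK_V+\pi^*(H)$ need not be ample when $-K_V$ is only $\pi$-ample; one should choose $H$ sufficiently ample relative to $p$, which is what ``sufficiently general very ample'' is meant to allow.)

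The one genuinely thin spot is the final step. As stated, ``feeding the difference of the two crepant expansions into the negativity lemma'' does not work directly: the difference
$$
\Big(\lambda-\tfrac1p\Big)\mathcal{M}_W\sim_{\mathbb{Q}}E_\alpha-E_\beta-\tfrac1p\,\beta^*\pi^*(H)
$$
contains the non-exceptional terms $\mathcal{M}_W$ and $\beta^*\pi^*(H)$, and $E_\alpha$ is $\alpha$-exceptional while $E_\beta$ is $\beta$-exceptional, so neither projection formula applies cleanly. The actual argument requires comparing $\lambda$ with $1/p$ and exploiting the Mori fiber space structure of $\pi\colon V\to Z$ --- e.g.\ pushing $K_W+\lambda\mathcal{M}_W=E_\alpha\geqslant 0$ forward to $V$ and restricting to general fibers of $\pi$, or intersecting with general curves in fibers of $\pi\circ\beta$ on which $\beta^*\pi^*(H)$ is trivial --- before any negativity-lemma argument can force $\chi$ to be square, and then one must separately observe that a square between $X\to\mathrm{pt}$ and $V\to Z$ contradicts non-biregularity (trivially if $\dim Z>0$, and directly if $Z$ is a point). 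You explicitly defer this to Corti's theorem, which is a legitimate move and mirrors the paper's own citation; but if the goal were a self-contained proof, this case analysis is the missing content, not a routine application of negativity. The equivariance bookkeeping you describe is correct and is indeed the only new ingredient over the classical statement.
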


This is a $G$-equivariant version of the classical Noether--Fano inequality.
EMMP and Theorem~\ref{theo:NFI} give a simple criterion for $G$-birational super-rigidity:

\begin{coro}
\label{coro:NFI}
Suppose that $\mathrm{rk}\,\mathrm{Cl}^G(X)=1$.
Then $X$ is $G$-birationally super-rigid if and only if for every $G$-invariant non-empty mobile linear system $\mathcal{M}$ on $X$,
the log pair $(X,\lambda\mathcal{M})$ has canonical singularities for $\lambda\in\mathbb{Q}_{>0}$ such that $\lambda\mathcal{M}\sim_{\mathbb{Q}} -K_X$.
\end{coro}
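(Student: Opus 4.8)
The plan is to establish both implications by contraposition, drawing on Theorem~\ref{theo:NFI} for one direction and on the $G$-equivariant Sarkisov program (EMMP) for the other. The standing hypothesis $\mathrm{rk}\,\mathrm{Cl}^G(X)=1$ ensures that $X$ is itself a $G$-Mori fiber space, so by the definition recalled above, $G$-birational super-rigidity of $X$ is equivalent to the statement that no $G$-Sarkisov link starts at $X$, or equivalently that every $G$-birational map from $X$ onto a $G$-Mori fiber space is biregular.

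First I would treat the implication that the canonicity condition forces super-rigidity, which follows almost directly from Theorem~\ref{theo:NFI}. Arguing by contraposition, suppose $X$ is not super-rigid, so there is a non-biregular $G$-birational map $\chi\colon X\dasharrow V$ onto a $G$-Mori fiber space $\pi\colon V\to Z$. After replacing $V$ by a $G\mathbb{Q}$-factorial terminal model (produced by EMMP, and not affecting the conclusion), the target satisfies the three hypotheses of Theorem~\ref{theo:NFI}. That theorem then yields a $G$-invariant non-empty mobile linear system $\cM=\chi_*^{-1}\bigl(|-pK_V+\pi^*(H)|\bigr)$ for which $(X,\lambda\cM)$ is \emph{not} canonical, contradicting the hypothesis. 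Hence no such $\chi$ exists.

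For the reverse implication I would argue, again by contraposition, that a non-canonical pair obstructs super-rigidity. Suppose $\cM$ is a $G$-invariant non-empty mobile linear system with $\lambda\cM\sim_{\bQ}-K_X$ and $(X,\lambda\cM)$ not canonical. The non-canonicity provides a $G$-invariant maximal center, which serves as the starting data of a $G$-Sarkisov link: extracting (a $G$-orbit realizing) this center by a $G$-equivariant divisorial contraction and running the $G$-EMMP on the resulting log pair terminates, after finitely many flips and contractions, at a $G$-Mori fiber space together with a non-biregular $G$-birational map out of $X$. This exhibits a $G$-Sarkisov link starting at $X$, so $X$ is not super-rigid.

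The main obstacle is the reverse implication: one must check that the failure of canonicity can be realized $G$-equivariantly as the genuine first step of a $G$-Sarkisov link, i.e., that a maximal center (or a single $G$-orbit of such) is extracted by one $G$-equivariant divisorial extraction, and that the subsequent $G$-EMMP terminates at a Mori fiber space without collapsing the link back to $X$ biregularly. This rests on the equivariant existence of Sarkisov links and termination of flips from~\cite{Corti1995,HaconMcKernan2013}, applied in the presence of the group action and of the possibly non-Gorenstein terminal singularities of $X$.
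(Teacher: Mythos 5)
Your proposal is correct and follows exactly the route the paper intends: the paper offers no written proof, simply asserting that the corollary follows from EMMP together with Theorem~\ref{theo:NFI}, and your two contrapositive implications (Noether--Fano for sufficiency of the canonicity condition, the equivariant Sarkisov/EMMP machinery for necessity) are precisely that argument spelled out. The technical point you flag about realizing a non-canonical center as the start of a genuine $G$-Sarkisov link is real but standard, and the paper takes it for granted as well.
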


There is a similar (albeit more technical) criterion for $G$-birational rigidity, see \cite[Chapter 3]{CS}.
If $X$ is toric, and $G$ contains the maximal torus in $\mathrm{Aut}(X)$, a criterion for $G$-solidity is given in \cite{Ch-toric}.

Usually, Corollary~\ref{coro:NFI} is applicable when $(-K_X)^3$ is ``sufficiently small''
or when the group $G$ is ``sufficiently large''.
For instance, for $(-K_X)^3=2$, 
arguing as in the proof of \cite[Theorem~A]{CheltsovPark2010}, we obtain:

\begin{theo}
\label{theo:CP}
Let $X\subset \mathbb{P}(1,1,1,1,3)$ be a hypersurface of degree $6$ with at most isolated ordinary double points (nodes)
and $G\subseteq \mathrm{Aut}(X)$ a finite subgroup
such that 
$\mathrm{rk}\,\mathrm{Cl}(X)^G=1$.
Then $X$ is $G$-birationally super-rigid.
\end{theo}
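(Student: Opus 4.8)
The plan is to follow the strategy of \cite[Theorem A]{CheltsovPark2010} verbatim in the equivariant setting, using Corollary~\ref{coro:NFI} as the reduction step. Since $\mathrm{rk}\,\mathrm{Cl}(X)^G=1$ by hypothesis, $X$ is a $G$-Mori fiber space, and by Corollary~\ref{coro:NFI} it suffices to show that for every $G$-invariant non-empty mobile linear system $\mathcal{M}$ on $X$, the log pair $(X,\lambda\mathcal{M})$ has canonical singularities, where $\lambda\in\mathbb{Q}_{>0}$ satisfies $\lambda\mathcal{M}\sim_{\mathbb{Q}}-K_X$. Note that $X\subset\mathbb{P}(1,1,1,1,3)$ of degree $6$ is a Fano threefold with $(-K_X)^3=2$ and $-K_X\sim\mathcal{O}_X(1)$, so $\mathcal{M}\subset|{-m}K_X|$ for some positive integer $m$ and $\lambda=1/m$. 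Crucially, the hypotheses on singularities (isolated ordinary double points) guarantee that $X$ has terminal Gorenstein singularities, and the nodes impose no worse local behavior than on the smooth locus for the canonicity analysis.

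First I would argue by contradiction: suppose $(X,\frac{1}{m}\mathcal{M})$ is not canonical, so there is a geometric valuation (a divisor over $X$) whose center $Z$ on $X$ violates the canonical threshold. The key dichotomy is whether $Z$ is a point or a curve. The non-equivariant proof of \cite{CheltsovPark2010} handles each case by intersection-theoretic estimates on $X$, using that a general member of $\mathcal{M}$ cannot be too singular along $Z$ without contradicting the numerical constraint $\mathcal{M}^2\cdot(-K_X)=m^2(-K_X)^3=2m^2$ or the corresponding bound coming from the multiplicity. These estimates are purely geometric and do not reference the group action at all, so they carry over unchanged to produce a contradiction: whenever the singularities fail to be canonical along a curve, one derives $\mathrm{mult}_C\mathcal{M}>m$ for some curve $C$, which is incompatible with the degree of $X$; the point case is handled by blowing up and estimating multiplicities of the proper transform.

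The role of the group $G$ is confined to the reduction via Corollary~\ref{coro:NFI}, which already builds the $G$-invariance into the statement (one only considers $G$-invariant mobile systems $\mathcal{M}$). Consequently, no separate equivariant argument is needed in the body of the proof — the inequalities that furnish the contradiction hold for \emph{any} mobile linear system, whether or not it is $G$-invariant. This is exactly why the theorem concludes $G$-birational super-rigidity for every such $G$ satisfying the rank condition: the geometric obstruction to non-canonical singularities is stronger than what the numerical invariants of a degree-$6$ Fano threefold with $(-K_X)^3=2$ can accommodate.

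I expect the main obstacle to be the treatment of the nodes. In \cite{CheltsovPark2010} the smooth case is primary, and one must verify that the presence of isolated ordinary double points does not create new centers of non-canonical singularities that evade the intersection estimates. The careful point is that at a node, a small resolution or weighted blowup introduces an exceptional divisor, and one must check that the discrepancy bookkeeping still forces canonicity; concretely, the local analytic model of a node in a degree-$6$ hypersurface and the behavior of $\mathcal{M}$ near it must be controlled so that $\mathrm{mult}$ at the node stays below the threshold $m$. I anticipate this is handled by the standard observation that ordinary double points contribute discrepancy $1$ under the blowup of the point, placing them safely within the canonical regime as long as the base locus estimates on the ambient $X$ hold — the same estimates that govern the smooth case. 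Verifying that these local contributions integrate correctly with the global numerical bound is the delicate step, but it is precisely the content that the phrase ``arguing as in the proof of \cite[Theorem~A]{CheltsovPark2010}'' is meant to invoke.
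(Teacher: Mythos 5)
Your proposal matches the paper's treatment: the paper offers no independent proof of Theorem~\ref{theo:CP} beyond the reduction through Corollary~\ref{coro:NFI} and the instruction to argue ``as in the proof of \cite[Theorem~A]{CheltsovPark2010}'', which is precisely the plan you lay out. Your key structural point --- that the group enters only through the equivariant Noether--Fano reduction (via $\mathrm{rk}\,\mathrm{Cl}(X)^G=1$), while the exclusion of smooth points, nodes, and low-degree curves proceeds by non-equivariant intersection estimates --- is exactly how the paper's fully written-out analogue for the Burkhardt quartic in Section~\ref{sect:Burk-rigid} is organized.
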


This is also expected for nodal quartics,
where $(-K_X)^3=4$, see \cite{Mella}:

\begin{conj}[{\cite[Conjecture 5.2]{CKS}}]
\label{conj:CKS}
Let $X\subset \bP^4$ be a nodal quartic threefold
and $G\subseteq \mathrm{Aut}(X)$ a finite subgroup
such that \mbox{$\mathrm{rk}\,\mathrm{Cl}(X)^G=1$}.
Then $X$ is $G$-birationally rigid.
\end{conj}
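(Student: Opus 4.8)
\medskip

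The plan is to run the $G$-equivariant Sarkisov program, using the Noether--Fano inequality (Theorem~\ref{theo:NFI}) together with the rigidity criterion of \cite[Chapter 3]{CS}, and to carry out $G$-equivariantly the exclusion-of-maximal-singularities strategy that proves birational rigidity of smooth quartics (Iskovskikh--Manin, Pukhlikov) and of $\mathbb{Q}$-factorial nodal quartics (Mella \cite{Mella}). Since $\mathrm{rk}\,\mathrm{Cl}(X)^G=1$, the variety $X$ is a $G$-Mori fiber space, and it suffices to show that every $G$-Sarkisov link starting at $X$ also ends at $X$. Suppose not: then there is a $G$-birational non-biregular map $\chi\colon X\dasharrow V$ to another $G$-Mori fiber space, and by Theorem~\ref{theo:NFI} there is a $G$-invariant mobile linear system $\mathcal{M}\sim_{\mathbb{Q}}-nK_X$ such that the log pair $(X,\frac1n\mathcal{M})$ fails to be canonical along a $G$-invariant center $Z$. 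Because $Z$ is $G$-invariant it is a union of $G$-orbits, and the first task is to classify the possible types of $Z$.

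First I would dispose of the case where $Z$ meets the smooth locus of $X$. Away from the nodes $X$ is smooth with $(-K_X)^3=4$, so the standard multiplicity inequalities for quartics apply: a non-canonical center at a smooth point forces $\mathrm{mult}_p\mathcal{M}>2n$, and a center along a curve $C$ forces $\mathrm{mult}_C\mathcal{M}>n$. The new equivariant input is orbit counting: since $\mathcal{M}$ is $G$-invariant the local multiplicity is constant along the $G$-orbit of $p$ (resp.\ along the orbit of components of $C$), so the total contribution to the global intersection number $\mathcal{M}^2\cdot(-K_X)=4n^2$ is multiplied by the orbit length. Summing over the orbit exhausts this global bound and yields a contradiction except for very short orbits, which can then be excluded by the same classical estimates. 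This step tightens, rather than weakens, the non-equivariant argument.

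The genuinely new phenomenon, and the crux of the proof, is a maximal center supported on a $G$-orbit of nodes. I would pass to the standard resolution $\sigma\colon\widetilde{X}\to X$ blowing up all nodes, with exceptional quadric surfaces $E_i\cong\bP^1\times\bP^1$, and analyze the link equivariantly on a $G\mathbb{Q}$-factorialization of $\widetilde{X}$. Here $\mathrm{rk}\,\mathrm{Pic}(\widetilde{X})^G$ equals $\mathrm{rk}\,\mathrm{Cl}(X)^G$ plus the number of $G$-orbits of nodes, so the hypothesis $\mathrm{rk}\,\mathrm{Cl}(X)^G=1$ means the invariant Picard lattice is spanned by $-K_X$ together with the orbit-sums of exceptional classes. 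Running EMMP $G$-equivariantly, the extremal contractions available are then severely constrained by the cone theorem on $\widetilde{X}$, and I expect to show that the only admissible node link is an equivariant double projection that factors as a Bertini/Geiser-type involution returning to $X$. This would give rigidity --- and, consistent with Theorem~\ref{theo:CP} producing super-rigidity only at $(-K_X)^3=2$, generally not super-rigidity, since these node self-maps persist.

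The main obstacle is precisely this last analysis under the \emph{weak} ($G$-invariant) $\mathbb{Q}$-factoriality condition $\mathrm{rk}\,\mathrm{Cl}(X)^G=1$, as opposed to the honest $\mathbb{Q}$-factoriality assumed by Mella. Individual nodes admit small resolutions and projections that are not $G$-stable, so one cannot simply invoke \cite{Mella} orbit by orbit; only the full $G$-orbit of nodes behaves equivariantly, and one must rule out that some $G$-equivariant combination of these small resolutions yields a link to a genuinely different $G$-Mori fiber space (a conic bundle, a Del Pezzo fibration, or another Fano) with invariant class group of rank $1$. In the absence of a classification of pairs $(X,G)$ this requires a uniform lattice-theoretic or cohomological argument --- essentially that $\mathrm{rk}\,\mathrm{Cl}(X)^G=1$ forces the $G$-action on the node configuration and on the exceptional lattice to be irreducible enough that every $G$-extremal contraction on $\widetilde{X}$ is either $K_X$-trivial or recovers $X$. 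Making this precise in full generality, without the Burkhardt-specific input of Proposition~\ref{prop:inv-class}, is the hard part.
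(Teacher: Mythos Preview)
The statement you are trying to prove is labeled a \emph{Conjecture} in the paper, and the paper does not prove it in general. Immediately after stating it, the authors say only that they establish the special case $X=X_4$ (the Burkhardt quartic), which is Proposition~\ref{thm:Burkhard-BSR}, proved in Section~\ref{sect:Burk-rigid}. So there is no ``paper's own proof'' of the general statement to compare against; your proposal is an outline for attacking an open problem, and you yourself flag the crux as unresolved (``Making this precise in full generality \ldots\ is the hard part'').

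It is instructive to contrast your sketch with what the paper actually does for the Burkhardt quartic. Your smooth-point and curve-center arguments match the paper's: the $4n^2$ inequality of Pukhlikov/Corti kills smooth points, and the degree bound $\deg Z\le 3$ follows from $-K_X\cdot M_1\cdot M_2=4n^2$. Where you propose to analyze node centers via the $G\mathbb{Q}$-factorialization, EMMP on $\widetilde{X}$, and a hoped-for classification of links as node involutions, the paper instead exploits a feature entirely specific to $X_4$: through every node there passes a Jacobi plane $\Pi\subset X_4$, and a general line $\ell\subset\Pi$ through the node gives $\widetilde{M}\cdot\widetilde{\ell}=n-a<0$, an immediate contradiction. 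The low-degree curve cases are likewise dispatched using Jacobi planes and tetrahedra (Lemmas~\ref{lemma:tetrahedra}--\ref{lemma:twisted-cubics}). In particular, for Burkhardt the node center is \emph{excluded}, not untwisted, and the conclusion is super-rigidity rather than mere rigidity. None of this geometry is available on a general nodal quartic, which is exactly why the general statement remains conjectural; your proposal correctly identifies the obstacle but does not surmount it.
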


In Section~\ref{sect:burk}, we prove this conjecture for the Burkhardt quartic.
Unfortunately, we do not have such precise (conjectural) characterizations of $G$-birational rigidity for most 
of the other (singular) Fano threefolds, apart from
sporadic results in this direction.
For instance, for the Segre cubic $X_3$, 
where $(-K_{X_3})^3=24$,
Avilov found all possibilities for $G\subset\mathrm{Aut}(X_3)\simeq\fS_6$ such that $X_3$ is $G$-birationally rigid:

\begin{theo}[{\cite{avilov-forms}}]
\label{theo:Avilov}
Let $X_3\subset \mathbb{P}^4$ be the Segre cubic and
$G\subseteq \mathrm{Aut}(X_3)$ a subgroup such that 
$\mathrm{rk}\,\mathrm{Cl}(X_3)^G=1$.
Then the following are equivalent:
\begin{enumerate}
\item $X_3$ is $G$-birationally rigid,
\item $X_3$ is $G$-birationally super-rigid,
\item $G$ contains a group isomorphic to $\fA_5$ that leaves invariant a hyperplane section of $X_3$.
\end{enumerate}
\end{theo}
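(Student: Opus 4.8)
The plan is to prove the three equivalences by a cycle of implications. The implication $(2)\Rightarrow(1)$ is immediate, since $G$-birational super-rigidity is by definition a strengthening of $G$-birational rigidity. It then suffices to establish $(3)\Rightarrow(2)$ and $(1)\Rightarrow(3)$; combined with $(2)\Rightarrow(1)$ these close the loop $(1)\Rightarrow(3)\Rightarrow(2)\Rightarrow(1)$ and yield all equivalences simultaneously (in particular forcing rigidity and super-rigidity to coincide under the hypothesis $\mathrm{rk}\,\mathrm{Cl}(X_3)^G=1$). Throughout I use that this hypothesis makes $X_3$ a $G$-Mori fiber space, so that the super-rigidity criterion of Corollary~\ref{coro:NFI} applies verbatim, and that by Remark~\ref{rema:rigid-lin} rigidity obstructs linearizability.

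For $(3)\Rightarrow(2)$ I would argue by contradiction through Corollary~\ref{coro:NFI}. If $X_3$ is not $G$-birationally super-rigid, there is a $G$-invariant mobile linear system $\mathcal{M}$ with $\lambda\mathcal{M}\sim_{\mathbb{Q}}-K_{X_3}$ such that $(X_3,\lambda\mathcal{M})$ is not canonical. Its non-canonical locus is $G$-invariant, hence a union of orbits of points and curves stable under the distinguished $\fA_5\subseteq G$ that fixes a hyperplane section $S$ (the Clebsch-type cubic surface with $\fA_5$-action). A direct computation shows this $\fA_5$ has \emph{no} fixed point on $X_3$, so any zero-dimensional center is an $\fA_5$-orbit of length at least $5$; the relevant short orbits are the single orbit of length $10$ formed by the $10$ nodes, and any smooth orbits of length $5$ or $6$. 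I would exclude these via the multiplicity (``$4$-point'') inequalities of Corti--Pukhlikov type: summing $\mathrm{mult}_p(\mathcal{M}^2)$ over an orbit and comparing with the available intersection number $\mathcal{M}^2\cdot(-K_{X_3})=24/\lambda^2$ rules out long orbits, while the section $S$ is used to restrict $\mathcal{M}$ and bound multiplicities along $\fA_5$-invariant curves, excluding one-dimensional centers.

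For $(1)\Rightarrow(3)$ I would argue contrapositively. Assuming $\mathrm{rk}\,\mathrm{Cl}(X_3)^G=1$ and that $G$ contains no copy of $\fA_5$ stabilizing a hyperplane section, I must exhibit $X_3$ as $G$-birational to a different $G$-Mori fiber space. One first enumerates the subgroups $G\subseteq\fS_6$ with $\mathrm{rk}\,\mathrm{Cl}(X_3)^G=1$ (a short list, as this rank only grows when $G$ shrinks); among them the ones failing $(3)$ are exactly those not containing a standard $\fA_5$, the principal cases being $\fS_3\wr C_2$ and $C_2\times\fS_4$. For each I would write down an explicit $G$-Sarkisov link starting at $X_3$: in the simplest cases (e.g.\ $\fS_3\wr C_2$, which fixes the node attached to the invariant partition into two triples) this is projection from a $G$-fixed node, which moreover linearizes the action in the sense of Theorem~\ref{theo:mainsegre}; in the remaining cases it is a link to a $G$-conic bundle or $G$-del Pezzo fibration obtained by blowing up a $G$-invariant orbit of nodes or contracting a $G$-invariant configuration of planes. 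Any such link proves $X_3$ is not $G$-birationally rigid.

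The main obstacle I anticipate is the center-exclusion step in $(3)\Rightarrow(2)$. Because $(-K_{X_3})^3=24$ is large, the Noether--Fano inequalities are weak, and the decisive difficulty is the local analysis of the pair $(X_3,\lambda\mathcal{M})$ at the $10$ nodes: these form a single $\fA_5$-orbit of length $10$, which a naive smooth-point count would exclude, but at an ordinary double point the usual multiplicity bounds must be replaced by their terminal-singularity analogues, and the short smooth orbits of length $5$ and $6$ require a comparably careful treatment. By contrast, the work in $(1)\Rightarrow(3)$ — verifying that each constructed link is genuinely $G$-equivariant and terminates on an honest Mori fiber space — is routine though lengthy.
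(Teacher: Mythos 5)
The paper does not prove this statement: Theorem~\ref{theo:Avilov} is imported verbatim from \cite{avilov-forms}, and the surrounding text only uses it (together with \cite[Proposition~4.1]{Avilov}) as a black box. So there is no internal proof to measure your argument against; what follows is an assessment of your proposal on its own terms.

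Your architecture --- the cycle $(1)\Rightarrow(3)\Rightarrow(2)\Rightarrow(1)$, Noether--Fano plus exclusion of centers for $(3)\Rightarrow(2)$, and explicit Sarkisov links for the contrapositive of $(1)\Rightarrow(3)$ --- is the expected one, and several of your observations are correct (the standard $\fA_5$ has no fixed point on $X_3$; the $10$ nodes form a single $\fA_5$-orbit; $\fS_3\wr C_2$ fixes the node attached to its invariant partition, so projection from it kills rigidity). But the two genuinely hard steps are not carried out. First, the exclusion of the node orbit in $(3)\Rightarrow(2)$: on $X_3$ one has $-K_{X_3}=\cO_{X_3}(2)$ and $(-K_{X_3})^3=24$, so non-canonicity at a node only yields $a>n$ for the exceptional multiplicity, and then both the ``general line in an invariant plane through the node'' trick (which works for the Burkhardt quartic in Section~\ref{sect:Burk-rigid} precisely because there $-K_{X_4}=\cO_{X_4}(1)$, giving $0\leqslant n-a<0$) and the naive sum of local contributions over the $10$-point orbit fail to produce a contradiction; this is where the actual content of Avilov's proof lies, and ``terminal-singularity analogues of the multiplicity bounds'' is a placeholder for it. Second, for $(1)\Rightarrow(3)$ you must produce an honest link for the standard $C_2\times\fS_4$ (rank-one class group, no fixed node, no invariant plane): the node orbits here have lengths $4$ and $6$, and one has to verify that the birational modification you blow up actually terminates in a $G$-Mori fiber space rather than flowing back to $X_3$ --- ``a link to a conic bundle or del Pezzo fibration obtained by blowing up an orbit of nodes'' asserts the conclusion rather than proving it. You should also pin down the complete list of conjugacy classes with $\mathrm{rk}\,\mathrm{Cl}(X_3)^G=1$ rather than treating only the ``principal cases''. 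As written, the proposal is a correct plan with the decisive estimates and constructions left open.
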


Returning to general threefolds, if $\mathrm{rk}\,\mathrm{Cl}(X)^G=2$,
then $X$ admits exactly two $G\mathbb{Q}$-factorializations,
and we have the~following $G$-equivariant commutative diagram:
\begin{equation}
\label{eq:G-Sarkisov}
\xymatrix{
&&V\ar@{-->}[rr]^{\varsigma}\ar@{->}[dll]_{\varphi}\ar@{->}[dr]_{\varpi}&&V^\prime\ar@{->}[dl]^{\varpi^\prime}\ar@{->}[drr]^{\varphi^\prime}\\%
Z&&&X&&&Z^\prime}
\end{equation}
where $\varpi$ and $\varpi^\prime$ are $G$-equivariant small resolutions such that 
$$
\mathrm{rk}\,\mathrm{Pic}(V)^G=\mathrm{rk}\,\mathrm{Cl}(V)^G=2=\mathrm{rk}\,\mathrm{Cl}(V^\prime)^G=\mathrm{rk}\,\mathrm{Pic}(V^\prime)^G,
$$
the map $\varsigma$ is a pseudo-automorphism that flops $\varpi$-contracted curves, 
both $\varphi$ and $\varphi^\prime$ are $G$-equivariant extremal contractions that can be of the following three types:
\begin{itemize}
\item a birational contraction,
\item a fibration into Del Pezzo surfaces over $\mathbb{P}^1$,
\item a conic bundle over a rational surface.
\end{itemize}
The diagram \eqref{eq:G-Sarkisov} is an example of a $G$-Sarkisov link (with $X$ being its \emph{center}).
When both $V$ and $V^\prime$ are smooth, such links have been studied in \cite{Takuchi1989,JahnkePeternellRadloff2005,JahnkePeternellRadloff2011,BlancLamy2012,CutroneMarshburn2013,ArapCutroneMarshburn2017,CutroneLimarziMarshburn2019,Takeuchi2022}.
Note that \eqref{eq:G-Sarkisov} is uniquely determined up to swapping its left and right sides.

If the morphism $\varphi$ in \eqref{eq:G-Sarkisov} is birational, then $Z$ is a Fano variety
with at most terminal singularities such that $\mathrm{rk}\,\mathrm{Cl}(Z)^G=1$,
so we are back to the case when $\mathrm{rk}\,\mathrm{Cl}^{G}(X)=1$ with $X$ replaced by $Z$.
Further, if the normalizer of $G$ in $\mathrm{Aut}(X)$ contains an automorphism $\sigma$ such that
$\mathrm{rk}\,\mathrm{Cl}^{\langle\sigma,G\rangle}(X)=1$, the diagram \eqref{eq:G-Sarkisov} simplifies as
\begin{equation}
\label{eq:G-Sarkisov-symmetric}
\xymatrix{
&&V\ar@{-->}[rrrr]^{\varsigma}\ar@{->}[dll]_{\varphi}\ar@{->}[d]^{\varpi}&&&&V\ar@{->}[d]_{\varpi}\ar@{->}[drr]^{\varphi}\\%
Z&&X\ar@{->}[rrrr]^{\sigma}&&&&X&& Z}
\end{equation}
In this case, we say that the $G$-Sarkisov link is symmetric.
For instance, if $X=X_3$ is the Segre cubic, this holds in many (but not all) cases.

\section{Burnside formalism}
\label{sect:incomp}

Here we explain a simplified version of the Burnside group formalism introduced in \cite{BnG}, 
which yields equivariant birational invariants of $G$-actions on algebraic varieties. We continue to work over an algebraically closed field $k$ of characteristic zero.

Applying equivariant blowups we may assume that the $G$-action is realized as a regular action on a {\em standard model} $(X,D)$ of the function field $K=k(X)$, i.e.,
\begin{itemize}
\item $X$ is smooth projective, $D$ a normal crossings divisor,
\item $G$ acts freely on $U:=X\setminus D$, 
\item for every $g\in G$ and every irreducible component $D$,
either $g(D) = D$ or $g(D)\cap  D =\emptyset$, 
\end{itemize}
see \cite[Section 7.2]{HKTsmall} for details. 
Given such a model, let 
$$
\{D_{\alpha}\}_{\alpha\in \mathcal A}
$$
be the set of irreducible divisors with nontrivial (thus necessarily cyclic) stabilizers $H_{\alpha}\subseteq G$;
we consider these up to conjugation in $G$. Each such $D_{\alpha}$ inherits a residual action of a group $Y_{\alpha}\subseteq Z_G(H_{\alpha})/H_{\alpha}$. 
Consider the subset $\mathcal A^{\mathrm{inc}}\subseteq  \mathcal A$ corresponding to
those divisors, together with the respective $Y_{\alpha}$-action, that cannot be obtained via equivariant blowups of {\em any} standard model of {\em any} $G$-variety. 

We have an assignment
\begin{equation}
\label{eqn:form}
X\actsfromright G \quad \Rightarrow   [X\actsfromright G]^{\mathrm{inc}}:= \quad \sum_{\alpha/\mathrm{conj}} (H_{\alpha}, Y_{\alpha} \actsfromleft k(D_{\alpha}), (b_{\alpha})),
\end{equation}
where the sum is over ($G$-conjugacy classes of) nontrivial cyclic $H_{\alpha}$, of symbols 
encoding
\begin{itemize} 
\item the stabilizer $H_{\alpha}$ of the generic point of $D_{\alpha}$, 
\item the residual action of $Y_{\alpha} \subseteq Z_G(H_{\alpha})/ H_{\alpha}$ on $D_{\alpha}$,
\item 
the character $b_{\alpha}$ of $H_{\alpha}$ in the normal bundle to $D_{\alpha}$ 
\end{itemize}
Note that $G$-conjugation extends to symbols in \eqref{eqn:form}, conjugating the $Y_{\alpha}$-action as well as the character $b_{\alpha}$, see \cite[Section 2]{KT-vector} for more details.

\begin{prop} \cite[Proposition 3.4]{KT-vector}
The class $[X\actsfromright G]^{\mathrm{inc}}$,
taking values in the free abelian group generated by symbols
\begin{equation}
\label{eqn:symbol}
(H, Y\actsfromleft k(D), (b)),\quad H\neq 1, 
\end{equation}
up to $G$-conjugation as above, 
is a well-defined $G$-birational invariant. 
\end{prop}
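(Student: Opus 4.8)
The plan is to show that the class $[X\actsfromright G]^{\mathrm{inc}}$ depends only on the $G$-birational equivalence class of the $G$-variety $X$, not on the choice of standard model. Since any two standard models of the same function field $K=k(X)$ with its $G$-action are related by a sequence of equivariant blowups and blowdowns (by equivariant resolution and the weak factorization theorem in the equivariant setting), it suffices to verify that the right-hand side of \eqref{eqn:form} is unchanged under a single $G$-equivariant blowup $\pi\colon X'\to X$ along a smooth $G$-invariant center $Z\subset X$ lying in normal crossing position with $D$. First I would fix notation for the symbols produced by such a blowup and compare $[X'\actsfromright G]^{\mathrm{inc}}$ with $[X\actsfromright G]^{\mathrm{inc}}$.

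The key computation is local, near the center $Z$. The strict transforms of the old divisors $D_\alpha$ retain their stabilizer $H_\alpha$, their residual $Y_\alpha$-action on the birationally unchanged $k(D_\alpha)$, and their normal-bundle character $b_\alpha$, so every symbol coming from an old divisor is reproduced verbatim on $X'$. The only genuinely new contribution is the exceptional divisor $E$ of $\pi$. Here I would split into cases according to the stabilizer of the generic point of $E$: if $Z$ lies in the free locus $U$, then $E$ has trivial generic stabilizer and contributes nothing to the sum, which ranges only over $H_\alpha\neq 1$; if $Z$ has nontrivial generic stabilizer, then $E$ carries a symbol, and the whole point is that this symbol is \emph{incompressible}, i.e.\ lies in $\mathcal A^{\mathrm{inc}}$. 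The crucial observation is that $E$, being itself an exceptional divisor of an equivariant blowup of a standard model, is by the very definition of $\mathcal A^{\mathrm{inc}}$ \emph{excluded} from that index set --- its symbol can be obtained via equivariant blowup of a standard model, so it is not incompressible. Hence $E$ contributes nothing to $[X'\actsfromright G]^{\mathrm{inc}}$ either, and the two classes agree.

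Thus the invariance under blowup reduces, essentially by construction, to the definition of incompressibility: the subset $\mathcal A^{\mathrm{inc}}$ was precisely designed to discard every symbol that can appear as a blowup exceptional divisor, which is exactly what could otherwise change under a blowup. I would also check well-definedness of the assignment itself: that the sum is finite (finitely many divisors with nontrivial stabilizer on a fixed model), and that it is independent of the choices of representatives in each $G$-conjugacy class, which follows since conjugation acts compatibly on all three data $(H_\alpha, Y_\alpha\actsfromleft k(D_\alpha), b_\alpha)$ as recorded after \eqref{eqn:form}.

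The main obstacle I anticipate is making the case analysis of the blowup center genuinely complete and uniform: when $Z$ itself is contained in or meets several of the $D_\alpha$, one must track how the normal crossing hypothesis forces the local monomial structure, verify that the characters $b_\alpha$ and the residual actions $Y_\alpha$ are genuinely unaffected on strict transforms, and confirm that the stabilizer of the generic point of $E$ is computed correctly (it is the stabilizer of the generic point of $Z$). The real content is therefore bookkeeping the local toric/monomial picture of an equivariant blowup in normal crossing position, and then invoking the definition of $\mathcal A^{\mathrm{inc}}$ to kill the exceptional symbol; all of this follows the blueprint of \cite[Section 2]{KT-vector}, so I would organize the argument to reduce to that reference wherever the routine local computation is identical.
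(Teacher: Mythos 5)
The paper gives no proof of this statement---it is imported verbatim from \cite[Proposition 3.4]{KT-vector}---and your argument is essentially the one given there: reduce via equivariant weak factorization to a single equivariant blowup along a smooth invariant center in normal crossing position with the boundary, observe that the strict transforms of the old divisors carry identical symbols (same generic inertia, same function field and residual action, same normal-bundle character), and note that the exceptional divisor's symbol is compressible by the very definition of $\mathcal A^{\mathrm{inc}}$, hence contributes nothing. The one step deserving more care than you give it is the reduction itself: the intermediate varieties in an equivariant weak-factorization chain need not be standard models (freeness off the boundary and the condition $g(D)=D$ or $g(D)\cap D=\emptyset$ can fail), so one must either invoke a factorization through standard models as in \cite[Section 7.2]{HKTsmall}, or rephrase the class in terms of divisorial valuations of $k(X)$ with nontrivial generic inertia and argue that an incompressible one must appear as a divisor on \emph{every} standard model.
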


This is a rough invariant, obliterating information from nontrivial stabilizers in higher codimensions; but it already allowed to distinguish actions not accessible with previous methods \cite{KT-vector}.  
Given this, it becomes essential to provide a geometric 
characterization of incompressible divisorial classes. 
As explained in \cite[Section 3.6]{TYZ}, this property  {\em a priori} depends on the ambient group $G$. However, for some $Y$-actions on $D$, there is no such dependence, and we will call such symbols {\em absolutely incompressible}.  

For instance, by \cite[Proposition 3.6]{KT-vector}, in dimension $2$, a divisorial symbol \eqref{eqn:symbol} is absolutely incompressible iff: 
\begin{itemize}
\item
$D$ is a curve of genus $\ge 1$, or
\item 
$D$ is a curve of genus 0, and the residual $Y$ action on $D$ is not cyclic. 
\end{itemize}
In dimension $3$, sufficient conditions for incompressibility include: 
 \begin{itemize}
\item $D$ is not uniruled, 
\item $D$ is $Y$-birational to a $Y$-solid Pezzo surface,
\item the $Y$-action on $D$ has cohomology: $\rH^1(Y, \Pic(D)) \neq 0$,
\item the $Y$-action on $D$ is not equivariantly birational to a $Y$-action on a $\bP^1$-bundle over a curve.
\end{itemize}
If one is interested in comparing a $G$-action on a rational threefold to a linear action on $\bP^3$, one can exclude symbols \eqref{eqn:symbol} where $D$ admits a surjection onto a curve of genus $\ge 1$, as such symbols are not produced by the algorithm from \cite{KT-vector} which computes the class of  a linear action in the full Burnside group of \cite{BnG}, see \cite[Corollary 6.1]{TYZ}. 

Thus, for applications to linearizability in dimension 3, 
we need a classification of incompressible divisorial symbols of the form
\begin{equation}
\label{eqn:sym2}
(H, Y\actsfromleft k(\bP^2), (b)). 
\end{equation}
This can be obtained by combining classification schemes for $G$-surfaces, with the Burnside formalism of \cite{BnG}. 
There are two complementary approaches: via EMMP, as carried out in \cite[Section 8]{DI}, and using cohomology, as in \cite{prokhorovII}. 
The first approach allows to completely settle the linearization problem for rational surfaces \cite{sari}.
But in practice, the second approach is simpler to apply \cite{BogPro,KT-Cremona}.
For instance, if $X$ is a minimal $G$-Del Pezzo surface, then the following are equivalent, by \cite[Theorem 1.2]{prokhorovII}: 
\begin{itemize}
\item 
vanishing cohomology:
\begin{equation}
\label{eqn:h1}
\rH^1(G', \Pic(X)) = 0, \quad \text{ for all } G'\subseteq G. 
\end{equation}
\item no element of $G$ fixes a curve of genus $\ge 1$, 
\item either the degree of $X$ is at least $5$ or $G=C_3\rtimes C_4$ and 
$X$ is 
$G$-birational to a specific nonlinearizable $G$-Del Pezzo surface of degree 4. 
\end{itemize}
While not strictly necessary for the analysis of incompressible symbols, there is also a complete description of conic bundles satisfying \eqref{eqn:h1}, see \cite[Theorems 8.3 and 8.6]{prokhorovII}. 

\begin{prop}
\label{prop:P2-inc}
Let $Y\subset \PGL_3(k)$ be a finite nonabelian group, acting linearly on $D=\bP^2$.  
This action gives rise to an absolutely incompressible divisorial symbol in dimension 3, of the form \eqref{eqn:sym2},
if and only if the action is transitive. 
\end{prop}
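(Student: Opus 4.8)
The plan is to reduce the incompressibility of the symbol \eqref{eqn:sym2} to a solidity statement about the $Y$-surface $\bP^2$, and then to settle the two directions using the classification of finite subgroups of $\PGL_3(k)$ together with the $G$-solidity results for del Pezzo surfaces. Concretely, since $\Pic(\bP^2)^Y=\bZ$ has rank $1$ and $\rH^1(Y,\Pic(\bP^2))=\Hom(Y,\bZ)=0$, the cohomological obstruction is vacuous, and among the sufficient conditions for incompressibility recalled above the decisive ones are: $\bP^2$ being $Y$-birational to a $Y$-solid del Pezzo surface (which forces incompressibility), and, conversely, $\bP^2$ being $Y$-equivariantly birational to a $\bP^1$-bundle over a curve (which, via the blowup relations of the Burnside formalism \cite{KT-vector,TYZ}, forces compressibility, as such divisorial symbols are exactly those produced by blowing up a curve). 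Because $Y$-solidity of $(\bP^2,Y)$ is intrinsic to the pair and does not refer to any ambient group, the resulting incompressibility is automatically \emph{absolute}. Thus it suffices to prove that, for nonabelian finite $Y\subset\PGL_3(k)$, the surface $\bP^2$ is $Y$-solid if and only if the action is transitive, i.e.\ the underlying three-dimensional representation is irreducible.

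For the direction ``incompressible $\Rightarrow$ transitive'' I argue by contraposition. If the action is intransitive, then by complete reducibility in characteristic zero the representation contains a one-dimensional $Y$-subrepresentation, hence $Y$ fixes a point $p\in\bP^2$. Blowing up $p$ produces the Hirzebruch surface $\mathbb{F}_1$ with a regular $Y$-action, and the pencil of lines through $p$ gives a $Y$-equivariant $\bP^1$-bundle $\mathbb{F}_1\to\bP^1$. Thus $\bP^2$ is $Y$-birational to a $\bP^1$-bundle over a curve, so the symbol is compressible.

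For the main direction ``transitive $\Rightarrow$ incompressible'' I would invoke the Blichfeldt trichotomy for finite subgroups of $\PGL_3(k)$ and treat the transitive imprimitive and primitive cases separately. In the imprimitive case, $Y$ preserves a triangle of three coordinate points, and irreducibility forces the image of $Y$ in $\fS_3$ permuting these points to be transitive, hence to contain a $3$-cycle; blowing up the triangle yields a $Y$-minimal del Pezzo surface of degree $6$ whose three conic-bundle structures are cyclically permuted by this $3$-cycle, so there is no $Y$-invariant conic bundle, and the surface classification \cite{DI,sari} gives that $\bP^2$ is $Y$-solid. In the primitive case ($Y$ among $\fA_4,\fS_4,\fA_5$, the Hessian group of order $216$ and its subgroups, and $\mathrm{PSL}_2(\bF_7)$) the same classification yields $Y$-solidity. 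The nonabelianity hypothesis enters exactly here, to exclude the abelian transitive groups such as the Heisenberg group $(\bZ/3)^2\subset\PGL_3(k)$, whose action on $\bP^2$ is $Y$-birational to a conic bundle and hence not solid; these would otherwise be transitive but compressible.

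The main obstacle is precisely this forward direction: establishing $Y$-solidity uniformly across all nonabelian transitive $Y$. The delicate point is not the evident contractions but ruling out conic-bundle (more generally, positive-dimensional-base Mori fiber space) structures arising from non-obvious $Y$-Sarkisov links, which is what the classification of $G$-solid del Pezzo surfaces \cite{sari} is designed to control; and confirming that the only transitive exceptions to solidity are abelian, so that the nonabelian hypothesis is exactly what is needed to close the equivalence.
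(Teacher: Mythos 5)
Your converse direction---intransitive implies a fixed point, whose blowup exhibits the Hirzebruch surface $\mathbb{F}_1$ and hence compressibility---is exactly the paper's argument. The forward direction, however, contains a genuine gap: the statement you reduce to, that $\bP^2$ is $Y$-solid for every nonabelian transitive $Y\subset\PGL_3(k)$, is false. For $Y=\fA_4$ acting through its irreducible three-dimensional representation, $\mathrm{Sym}^2$ of that representation contains the trivial summand, so there is an invariant smooth conic, on which $\fA_4$ has an orbit of length $4$; blowing up these four points gives a del Pezzo surface of degree $5$ with $\mathrm{rk}\,\Pic^{\fA_4}=2$, and the pencil of conics through them is an $\fA_4$-equivariant extremal conic bundle structure. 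Thus $\bP^2$ is $\fA_4$-birational to a Mori fiber space over $\bP^1$ and is not $\fA_4$-solid (consistently with the failure of rigidity for $\fA_4,\fS_4$ in \cite{sako}). Your imprimitive-case argument only rules out invariant conic bundle structures on the degree~$6$ del Pezzo surface over the invariant triangle, and does not---cannot---exclude this other link. The paper avoids this trap by using the strictly weaker criterion from Section~\ref{sect:incomp}: incompressibility only requires that the action not be $Y$-birational to a $\bP^1$-\emph{bundle} over a curve, and a conic bundle with degenerate fibers is not such a bundle. Concretely, \cite{sako} gives birational rigidity for all transitive actions except $\fA_4$ and $\fS_4$, and those two are handled separately (via \cite{Pinardine}, \cite{Ko2022}, or the observation that the Klein four subgroup fixes a point on $\bP^2$ but on no Hirzebruch surface with a faithful $\fA_4$- or $\fS_4$-action).

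A second error is your account of the nonabelian hypothesis. The Heisenberg group $C_3^2$ acts transitively, so by the very theorem of \cite{sako} you invoke, $\bP^2$ is $C_3^2$-birationally rigid and in particular is \emph{not} birational to a conic bundle; and even if it were, ``birational to a conic bundle'' would not yield compressibility, for the reason above. So your proposed dichotomy (nonabelian transitive $=$ solid, abelian transitive $=$ non-solid hence compressible) misidentifies both the content of the hypothesis and the criterion being applied; whatever the reason for restricting to nonabelian $Y$, it is not the one you give.
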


\begin{proof} 
By \cite{sako}, if $Y$ acts transitively, then $D$ is $Y$-birationally rigid except for $Y=\fA_4$ or $\fS_4$. 
In particular, such actions are not birational to actions on Hirzebruch surfaces. 
If $Y=\fA_4$, it follows from \cite{Pinardine} or the proof of \cite[Proposition 43]{Ko2022}
that $D$ is not $Y$-birational to a Hirzebruch surface. 
Alternatively, one can notice that the Klein four subgroup of $\fA_4$ fixes a point in $D$,
while every faithful action of $\fA_4$ on a Hirzebruch surface does not enjoy this property. 
Same holds for $Y=\fS_4$.
Hence, if $Y$ acts transitively on $D$, then the symbol is absolutely incompressible.

Conversely, if $Y$ fixes a point on $\bP^2$, then a $Y$-equivariant blowup exhibits a Hirzebruch surface, and the symbol is compressible. 
\end{proof}

\begin{prop}[\cite{Pinardin}]
\label{prop:dp6}
Let $D$ be a Del Pezzo surface of degree 6 
and $Y\subset \Aut(D)$ a finite subgroup acting transitively on $(-1)$-curves.  If $Y\not\simeq C_6$ and $Y\not\simeq \fS_3$ then $D$ is $Y$-solid, and the $Y$-action is not (projectively) linearizable.
\end{prop}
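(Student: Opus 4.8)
The plan is to run the equivariant Sarkisov program for surfaces after first reducing to a minimal model. Recall that a del Pezzo surface $D$ of degree $6$ is toric, with $\Aut(D)=T\rtimes \fD_6$, where $T\cong\gm\times\gm$ and $\fD_6$ is the symmetry group of the hexagon formed by the six $(-1)$-curves; the torus fixes each boundary curve, so the $Y$-action on the six $(-1)$-curves factors through the image $\bar Y$ of $Y$ in $\fD_6$. First I would observe that transitivity forces $\bar Y$ to be a transitive subgroup of $\fD_6$ — hence $\bar Y$ is $C_6$, the unique transitive copy of $\fS_3$, or all of $\fD_6$ — and that in each case the $Y$-invariant classes in $\Pic(D)\otimes\bQ$ reduce to multiples of $-K_D$, so $\mathrm{rk}\,\Pic(D)^Y=1$ and $D$ is a minimal $Y$-del Pezzo surface, i.e. a $Y$-Mori fiber space over a point.

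Because $\deg D=6\ge 5$, the criterion of \cite[Theorem 1.2]{prokhorovII} quoted above gives $\rH^1(Y',\Pic(D))=0$ for every $Y'\subseteq Y$, so the cohomological obstruction {\bf (H1)} is vacuous and cannot detect non-linearizability. I would therefore argue geometrically, via the $G$-equivariant Sarkisov program for surfaces \cite{DI,sari}: every $Y$-birational map out of the $Y$-Mori fiber space $D$ decomposes into $Y$-Sarkisov links, so it suffices to control these links. For $Y$-solidity I must rule out any link terminating in a conic bundle. The surface $D$ itself carries no $Y$-invariant conic bundle structure: there are exactly three such structures, the pencils $|H-E_i|$ attached to the three pairs of opposite hexagon vertices, and $\fD_6$ permutes them through the quotient $\fD_6\twoheadrightarrow\fS_3$ whose kernel is the central rotation by three; one checks directly that for $\bar Y=C_6$, the transitive $\fS_3$, or $\fD_6$ the induced action on these three structures has no fixed point. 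Hence a conic bundle can appear only through a nontrivial link: blow up a $Y$-orbit of points on $D$ and run the $Y$-MMP.

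The crux of the argument, and the step I expect to be hardest, is the classification of these links by the size and position of the $Y$-orbit being blown up, together with the demonstration that the ones producing a conic bundle exist precisely for $Y\cong C_6$ and $Y\cong \fS_3$ and for no other transitive $Y$. Concretely, I would show that the only way to initiate such a link is to blow up a short $Y$-orbit lying off the hexagon — in the best case a $Y$-fixed point in the open torus $T\subset D$ — and that such a fixed point exists exactly when $Y$ is one of the two ``regular'' order-$6$ groups $C_6$ or $\fS_3$, for which it indeed yields a $Y$-equivariant link to a conic bundle, whereas the extra symmetry present in $\fD_6$ and in the torus-extended groups destroys all such short orbits and obstructs every candidate link. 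This orbit and fixed-point bookkeeping, carried out uniformly over the finitely many transitive $Y$, is the technical heart of the proof and is essentially the computation behind \cite{Pinardin}.

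Finally I would deduce non-linearizability from solidity. Granting that $D$ is $Y$-solid, it is not $Y$-birational to any conic bundle, so it remains to exclude $Y$-birationality to $\bP^2$ with a linear action. Here I use a dichotomy on the linear $Y$-action on $\bP^2$: if it fixes a point, then blowing up that point exhibits $\bP^2$ as $Y$-birational to a Hirzebruch surface, i.e. a conic bundle, which solidity forbids; if it has no invariant point, then by \cite{sako} (as in the proof of Proposition~\ref{prop:P2-inc}) the action on $\bP^2$ is $Y$-birationally rigid, so $\bP^2$ cannot be $Y$-birational to the distinct minimal $Y$-del Pezzo surface $D$. The only subtlety is the pair $\fA_4,\fS_4$ excluded in \cite{sako}: since $\fA_4$ admits no transitive quotient inside $\fD_6$ it never occurs, while the $\fS_4$-case (which arises with $\bar Y$ the transitive $\fS_3$ and $Y\cap T\cong C_2^2$) must be closed off by the same link analysis, showing directly that $\bP^2$ and $D$ are not $Y$-birational. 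In every case $D$ is not $Y$-birational to $\bP^2$, so the $Y$-action is not (projectively) linearizable, which together with solidity completes the proof.
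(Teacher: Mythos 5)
The paper does not actually prove this statement: Proposition~\ref{prop:dp6} is imported wholesale from \cite{Pinardin}, so there is no internal proof to compare yours against. Judged on its own, your sketch has the right skeleton (transitivity forces $\mathrm{rk}\,\Pic(D)^Y=1$, {\bf (H1)} is vacuous in degree $6$, none of the three conic bundle classes $|H-E_i|$ is $Y$-invariant, so one must control Sarkisov links and then separately exclude birationality to a linear $\bP^2$), and those preliminary reductions are correct. But the step you yourself call the crux is both unproved and, as stated, wrong. You claim that a $Y$-fixed point in the open torus ``exists exactly when $Y$ is one of the two regular order-$6$ groups $C_6$ or $\fS_3$.'' This is false: the standard lift of the full hexagon group $\fD_6\subset\GL_2(\bZ)$ fixes the identity $(1,1)\in T$, and yet by the very proposition (and by \cite{isk-s3}, which the paper cites alongside it) this $\fD_6$-action is solid and nonlinearizable. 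So the presence or absence of a torus fixed point cannot be the dividing line; what distinguishes $C_6$ and $\fS_3$ from $\fD_6$ is what happens \emph{after} blowing up that fixed point, namely whether the resulting quintic Del Pezzo surface carries a $Y$-orbit of disjoint $(-1)$-curves whose contraction continues the link towards $\bP^2$ or a conic bundle. That analysis is entirely absent. Moreover, links out of a sextic Del Pezzo surface are not centered only at fixed points: orbits of length $2$ and $3$ in general position also initiate elementary links (to quartic Del Pezzo surfaces with conic bundle structures, to cubic surfaces, etc.), and your sketch does not address them at all.

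Two further points. First, the $\fS_4$ case in your last paragraph (which genuinely occurs, via $Y\cap T\simeq C_2^2$ and $\bar Y$ the transitive $\fS_3$) is again ``closed off by the same link analysis'' that you never perform; since transitive $\fS_4$-actions on $\bP^2$ are precisely the non-rigid exceptions in \cite{sako}, this case cannot be waved away. Second, deferring the heart of the argument to ``the computation behind \cite{Pinardin}'' is circular: that is the reference the proposition is attributed to, so invoking it is not a proof but a restatement of the citation. To make this a proof you would need to carry out the full enumeration of $Y$-orbits of length $\le 3$ on $D$ for each admissible $Y$ and verify, link by link, that every one either fails to exist or returns to $D$.
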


In particular, the corresponding divisorial symbols in dimension 3, of the form \eqref{eqn:sym2} 
are absolutely incompressible.

\section{Linearizability of $G$-actions on tori}
\label{sect:tori}

Recall the structure of automorpisms $\Aut(T)$ of an algebraic torus $T=\mathbb G_m^n$, over a field $k$: there is an exact sequence of groups
$$
1\to T(k) \to \Aut(T) \stackrel{\phi}{\lra} \GL_n(\bZ) \to 1,
$$
and the homomorphism $\phi$ admits a section.
In particular, the torus $T$ admits automorphisms arising from finite subgroups $\Gamma\subset \GL_n(\bZ)$.

Let $X$ be a smooth projective $T$-equivariant compactification of $T$. Its Picard group has a presentation 
\begin{equation}
\label{eqn:seq}
0\to \mathfrak X^*(T) \to \mathrm{PL} \to \Pic(X)\to 0,
\end{equation}
where $\mathrm{PL}$ is the free abelian group spanned by irreducible components of the boundary $X\setminus T$, and $\mathfrak X^*(T)$ is the character group of $T$. In presence of $G$-actions, the sequence \eqref{eqn:seq} is a sequence of $\Gamma$-modules, where $\Gamma:=\phi(G)\subset \GL_n(\bZ)$; here
$\mathrm{PL}$ is a permutation module.

Lists of finite groups $\Gamma$, for small $n$, 
giving rise to actions on $\Pic(X)$ which 
satisfy {\bf(SP)}, and thus {\bf (H1)}, can be found in \cite{hoshi}. 
Linearizability properties of actions of finite subgroups of $\Aut(T)$ have been studied via birational rigidity techniques in \cite{Ch-toric,CSar}, where many examples of $G$-birationally rigid toric Fano threefolds were produced, and the groups $G$ considered typically had a large intersection with $T(k)$. 
The {\em stable} linearization problem of toric varieties, with $G$-actions satisfying $G\cap T(k)=\emptyset$, was settled in \cite[Proposition 12]{HT-torsor}. 

Linearization of actions on 2-dimensional tori is understood \cite{sari}.   
Let us recall the analysis in dimension 3, following \cite{kun3}: 

\

\noindent
{\em Step 1.} There are 4 maximal finite subgroups $\Gamma\subset \GL_3(\bZ)$, and in each case we can fix an explicit
(possibly singular) projective toric Fano threefold $X$ on which the $\Gamma$-action is regular: 
\begin{itemize}
\item[(F)] with $\Gamma_F:=C_2\times \fD_6$, acting on $X=\bP^1\times S$, where $S$ is a degree 6 Del Pezzo surface; 
\item[(C)] with $\Gamma_C:=C_2\times \fS_4$, acting on $X=\bP^1\times \bP^1\times \bP^1$;
\item[(S)] with $\Gamma_S:=C_2\times \fS_4$, acting on $X=X_{2,2}\subset \bP^5$, a singular intersection of two quadrics;
\item[(P)] with $\Gamma_P:=C_2\times \fS_4$; acting on the singular divisor
$$
X=\{ x_0y_0z_0t_0 = x_1y_1z_1t_1 \} \subset \bP^1\times \bP^1\times \bP^1\times \bP^1. 
$$
\end{itemize}
If $X$ is singular, we let $\widetilde{X}\to X$ be the equivariant blowup of its singular points. 
If $X$ is smooth, we let $\widetilde{X}=X$.

\

\noindent
{\em Step 2.}
By \cite[Proposition 1]{kun3}, in absence of an obstruction of type {\bf (SP)} for the $\Gamma$-module $\Pic(X)$, one of the following holds:
\begin{itemize}
\item[(a)] the $\Gamma$-module $\mathfrak X^*(T)$ splits and the action is birational to a product action,  
\item[(b)] $\mathfrak X^*(T)^{\Gamma}\neq 0$,
\item[(c)] the action is via a subgroup of $\Gamma_C$,
\item[(d)] the action is via $C_4$, $\fS_3$, or is linear, via a subgroup of $\fS_4$.
\end{itemize} 

\

\noindent
{\em Step 3.}
Fix the following subgroups in $\GL_3(\bZ)$:
$$
U_1:=\left\langle
\begin{pmatrix}
    -1&0&0\\
    0&0&-1\\
    0&-1&0
\end{pmatrix},
\begin{pmatrix}
    -1&-1&-1\\
    0&0&1\\
    0&1&0
\end{pmatrix}\right\rangle,
$$
$$
W_1:=\left\langle
\begin{pmatrix}
    0&1&1\\
    0&0&1\\
    -1&-1&-1
\end{pmatrix},
\begin{pmatrix}
    -1&0&0\\
    0&-1&0\\
    0&0&-1
\end{pmatrix}\right\rangle,
$$
$$
W_2:=\left\langle
\begin{pmatrix}
    0&0&1\\
    -1&-1&-1\\
    1&0&0
\end{pmatrix},
\begin{pmatrix}
    -1&-1&-1\\
    0&0&1\\
    0&1&0
\end{pmatrix},
\begin{pmatrix}
    -1&0&0\\
    0&-1&0\\
    0&0&-1
\end{pmatrix}\right\rangle.
$$
Then $U_1\simeq C_2\times C_2$, $W_1\simeq C_2\times C_4$, $W_2\simeq C_2^3$. It follows from \cite{kun3} that 
$$
\rH^1(U_1, \Pic(\widetilde{X}))\neq 0.
$$
Hence, if $\Gamma$ contains a subgroup conjugate to $U_1$, then $\Gamma$ and $G$ do not satisfy {\bf(H1)}.
There are exactly 12 conjugacy classes of such subgroups $\Gamma\subset \GL_3(\bZ)$. 

Furthermore, if $\Gamma$ contains a subgroup conjugate to $W_1$ or  $W_2$, then 
it also follows from \cite{kun3} that the $\Gamma$ and $G$-action on $\mathrm{Pic}(\widetilde{X})$ do not satisfy {\bf(SP)} 
(but $W_1$ and $W_2$ do satisfy  {\bf(H1)}).

\

Here we adapt this to the study of linearizability of the actions of these groups on tori, via a case by case study as in {\em Step 1}.
We identify subgroups of $\GL_3(\bZ)$ with subgroups of $\mathrm{Aut}(X)$ via the standard lifts to $\mathrm{Aut}(T)$ with fixed point $(1,1,1)\in T$. 

\

\noindent
{\bf Case (F):}
Assume that $\Gamma\subseteq\fD_6$, with trivial action on $\bP^1$.
The action of $\fD_6$ on $S$ is not linearizable by Proposition~\ref{prop:dp6} or \cite{isk-s3}. However, by \cite[Proposition 12]{BBT}, $\bP^1\times S$ is {\em linearizable} with the trivial action on $\bP^1$. 
The action of every proper subgroup of $\fD_6$ on $S$ is linearizable. 

Conversely, if $\Gamma=\Gamma_F$, then the action on the toric threefold contributes an absolutely incompressible,
by Proposition~\ref{prop:dp6},
symbol
$$
(C_2, \fD_6\actsfromleft k(S), (1)),
$$
from the origin in the torus.
On the other hand, such symbols do not arise from projectively linear actions, as $S$ is not $\fD_6$-linearizable, nor birational to a product of (projectively) linear actions \cite[Example 9.2]{TYZ-3}.

\

\noindent
{\bf Case (C):}  
The action of $\Gamma_C$ on $\bG_m^3$ is generated by
$$
(x_1,x_2,x_3)\mapsto (x_3,x_2,\frac{1}{x_1}),\quad (\frac{1}{x_1},\frac{1}{x_3},\frac{1}{x_2}), \quad (\frac{1}{x_1},\frac{1}{x_2},\frac{1}{x_3}). 
$$
A birational change of coordinates $y_i:=\frac{1-x_i}{1+x_i}$ yields the action
$$
(y_1,y_2,y_3)\mapsto (y_3,y_2,-y_1),\quad (-y_1,-y_3,-y_2), \quad  (-{y_1},-{y_2},-{y_3}), 
$$
which is clearly linearizable.

\

\noindent
{\bf Case (S):}  
By results in Section~\ref{sect:segre} (Proposition~\ref{prop:x3plane} and Theorem~\ref{thm:segrelin}), it suffices to establish 
the linearizability of the action of 
 $\Gamma=C_2^2$ on $\bG_m^3$, with coordinates $x_1,x_2,x_3$, via
$$
\sigma: x_1\leftrightarrow x_3,\,\, x_2\mapsto 1/x_1x_2x_3, \quad \tau: x_j\mapsto 1/x_j, \quad j=1,2,3. 
$$
This action is conjugate to a subgroup of $\Gamma_C$ in Case (C) and is linearizable. 

\

\noindent
    {\bf Case (P):}  For $\Gamma\subseteq \Gamma_P$, only five groups $G$ do not appear in {Case (F), (C)} or {(S)}, up to conjugation:
$$
\Gamma=\fA_4, C_2\times\fA_4, \fS_4,\fS_4',\,\,\text{or}\,\, \Gamma_P.
$$
Each of them contains a subgroup conjugate to $U_1$ and thus is not linearizable. In summary, a subgroup of $\Gamma_P$ is not linearizable if and only if it contains one of $U_1,W_1,$ or $W_2$.

\

We summarize the above discussion:

\begin{theo} 
\label{thm:lin-tori}
Let $T=\bG_m^3$ and $G\subset \Aut(T)$ be such that 
$\Gamma:=\phi(G)$ contains $U_1,W_1$, or $W_2$. Then the $G$-action on $T$ is not stably linearizable. 
Assume that $T^G\ne\emptyset$, i.e., $G$ fixes a point in $T$. Then 
\begin{itemize}
\item 
if $\Gamma=C_2\times \fD_6$, then the action is not linearizable but stably linearizable,
\item 
if $\Gamma\neq C_2\times \fD_6$ and does not contain $U_1,W_1$, or $W_2$, then the action is linearizable. 
\end{itemize}
\end{theo}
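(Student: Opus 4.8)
The plan is to read the three regimes of the statement directly off the case-by-case analysis of the four maximal finite subgroups $\Gamma_F,\Gamma_C,\Gamma_S,\Gamma_P\subset\GL_3(\bZ)$ carried out in \emph{Steps 1--3} and \emph{Cases (F)--(P)} above, after first reducing to them. I would begin by invoking \emph{Step 1}: every finite $\Gamma\subset\GL_3(\bZ)$ is conjugate into one of these four groups, and for each we have fixed an explicit (possibly singular) projective toric model $X$ with smooth equivariant resolution $\widetilde X\to X$. Since $\widetilde X$ is a smooth projective $G$-compactification of $T$, it is $G$-birational to $T$, so every obstruction attached to $\Pic(\widetilde X)$ transfers to the torus. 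I would then identify $\Gamma$ with a subgroup of $\Aut(X)$ via the standard lift fixing $(1,1,1)\in T$, as prescribed before \emph{Case (F)}.

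For the first assertion (non-stable linearizability when $\Gamma$ contains a conjugate of $U_1,W_1$, or $W_2$) I would argue purely cohomologically, with no fixed-point hypothesis needed. If $\Gamma$ contains $U_1$, then $\rH^1(U_1,\Pic(\widetilde X))\neq 0$ by \cite{kun3}, so {\bf(H1)} already fails; if $\Gamma$ contains $W_1$ or $W_2$, then $\Pic(\widetilde X)$ is not a stably permutation module, so {\bf(SP)} fails although {\bf(H1)} holds. In either case {\bf(SP)} is violated, and since {\bf(SP)} is a necessary condition for stable linearizability computed on the smooth model $\widetilde X$, the $G$-action on $T$ is not stably linearizable.

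Next, under the hypothesis $T^G\neq\emptyset$, I would dispose of the linearizable cases (the second bullet) by running through Cases (C), (S) and the proper subgroups of (F). Case (C) supplies the explicit birational change of coordinates $y_i=(1-x_i)/(1+x_i)$, which is $\Gamma_C$-equivariant and turns the action into a linear one; hence every subgroup of $\Gamma_C$ is linearizable. Case (S), via Proposition~\ref{prop:x3plane} and Theorem~\ref{thm:segrelin}, reduces to the single remaining group $C_2^2$, which is conjugate into $\Gamma_C$ and thus linearizable; and for proper subgroups of $\Gamma_F$ one uses that every proper subgroup of $\fD_6$ acts linearizably on the degree-$6$ Del Pezzo surface $S$, together with \cite[Proposition 12]{BBT}. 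Finally, Case (P) contributes nothing new: the only subgroups of $\Gamma_P$ not already appearing in (F), (C), (S) are $\fA_4,\ C_2\times\fA_4,\ \fS_4,\ \fS_4'$ and $\Gamma_P$ itself, each of which contains a conjugate of $U_1$ and so falls under the first assertion.

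The crux, and the step I expect to be the main obstacle, is the borderline group $\Gamma_F=C_2\times\fD_6$, which must be shown simultaneously \emph{non}-linearizable yet stably linearizable. For non-linearizability I would compute the incompressible Burnside class $[T\actsfromright \Gamma_F]^{\mathrm{inc}}$: the origin of $T$, a $\Gamma_F$-fixed point, produces in a standard model a divisor with stabilizer $C_2$ isomorphic to $S$ and carrying the residual $\fD_6$-action, contributing the symbol $(C_2,\fD_6\actsfromleft k(S),(1))$. This symbol is absolutely incompressible by Proposition~\ref{prop:dp6}, since $\fD_6$ acts transitively on the $(-1)$-curves of $S$ and $\fD_6\not\simeq C_6,\fS_3$; one then verifies, as in \cite[Example 9.2]{TYZ-3}, that it cannot occur in the class of any (projectively) linear action, because $S$ is neither $\fD_6$-linearizable nor $\fD_6$-birational to a product of linear actions. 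For stable linearizability I would use that the $\Gamma_F$-module $\mathfrak{X}^{*}(T)$ splits, placing the action in Kunyavski\u{\i}'s product case, so that it is birational to the product of the inversion $C_2$-action on $\gm$ with the $\fD_6$-action on $\gm^2$; the first compactifies to a linearizable $\bP^1$, while by \cite[Proposition 12]{BBT} one has $S\times\bP^1\sim_{\fD_6}\bP^3$, i.e.\ $S$ is stably $\fD_6$-linearizable, and the product of a linearizable and a stably linearizable action of the factors of a direct product is stably linearizable. The delicate point is exactly this coexistence, namely exhibiting a nonzero birational obstruction that nevertheless evaporates after stabilization, so the real work lies in the negative Burnside computation confirming that the degree-$6$ Del Pezzo symbol is genuinely absent from every linear class.
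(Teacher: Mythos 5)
Your proposal is correct and, for most of the statement, follows the paper's own route: the reduction to the four maximal finite subgroups $\Gamma_F,\Gamma_C,\Gamma_S,\Gamma_P$ with their fixed toric models, the $\mathbf{(H1)}$-obstruction for $U_1$ and the $\mathbf{(SP)}$-obstruction for $W_1,W_2$ (valid without any fixed-point hypothesis, since the $G$-action on $\Pic(\widetilde X)$ factors through $\Gamma$ and both obstructions pass to subgroups), the explicit coordinate change in Case (C), the reduction of Case (S) to the single $C_2^2$ conjugate into $\Gamma_C$, the observation that the new subgroups in Case (P) all contain $U_1$, and the incompressible symbol $(C_2,\fD_6\actsfromleft k(S),(1))$ for nonlinearizability of $\Gamma_F$. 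The one place where you genuinely diverge is the stable linearizability of the $C_2\times\fD_6$-action: the paper disposes of this in one line by invoking the equivariant torsor formalism of \cite{HT-torsor}, whereas you exploit the splitting $\mathfrak X^*(T)=\bZ\oplus\bZ^2$ to write the action as $(\bP^1,C_2)\times(S,\fD_6)$, apply \cite[Proposition 12]{BBT} to replace $(S,\fD_6)\times\bP^1$ by $\bP(V_2)$ with a linear $\fD_6$-action, and then appeal to the general principle that such a product of projectively linear actions of the two direct factors is stably linearizable. That last step is the only point you leave as a black box; it does hold here, via the standard no-name argument
$$
\bP(V_1)\times\bP(V_2)\times\bP^1 \sim_{G_1\times G_2} \bP\big(\cO(-1,0)\oplus\cO(0,-1)\big)\sim_{G_1\times G_2}\bP(V_1\oplus V_2),
$$
which requires that $G_i$ act generically freely on $\bP(V_i)$ --- true for $V_1=1\oplus\chi$ of $C_2$ and for the faithful $\fD_6$-action on $\bP(V_2)\sim S\times\bP^1$. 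Your route is more self-contained and makes the mechanism of ``obstruction that evaporates after stabilization'' explicit at the level of representations, at the cost of this extra lemma; the paper's citation of the torsor formalism is shorter and is the version that generalizes to twisted (non-split) situations. Everything else in your write-up matches the paper's proof.
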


\begin{proof}
    If $\phi(G)=U_1$, then $G$ has an $\mathbf{(H1)}$-obstruction to stable linearizability. If $\phi(G)=W_1$ or $W_2$, then $G$ has an $\mathbf{(SP)}$-obstruction to   stable linearizability. When $T^G\ne\emptyset$, we can assume $(1,1,1)\in T^G$ up to translation. Then we are in one of the cases (F), (C), (S) or (P) discussed above.

    Stable linearizability of the $C_2\times \fD_6$-action is established as in \cite{HT-torsor}, using the equivariant version of the torsor formalism. 
\end{proof}

\begin{rema}
    The second part of Theorem~\ref{thm:lin-tori} does not hold without the assumption $T^G\ne\emptyset$. For example, consider $\Gamma=C_2^2\subset \GL_3(\bZ)$ from Case (S) above. Up to conjugation, we find two translation-free lifts $G\subseteq \Aut(T)$ of $\Gamma$, i.e., $\phi(G)=\Gamma$: the standard lift generated by 
$$
\sigma: y_1\mapsto \frac{1}{y_1},\,\, y_2\leftrightarrow y_3, \quad\tau: y_j\mapsto\frac{1}{y_j}, \quad j=1,2,3,
$$
and a twist of it generated by 
$$
\sigma': y_1\mapsto -\frac{1}{y_1},\,\, y_2\leftrightarrow y_3, \quad\tau: y_j\mapsto\frac{1}{y_j}, \quad j=1,2,3.
$$
The standard lift is linearizable. The twisted one is not linearizable as the $G$-action on the projective model $\bP^1\times\bP^1\times\bP^1$ does not have a fixed point. In particular, these two lifts are not equivariantly birational.

    
\end{rema}

\section{Geometry of the Segre cubic}
\label{sect:segre}

Rationality of forms of the Segre cubic threefold over nonclosed fields has been considered in \cite{FR}; there exist nonrational forms over nonclosed fields. All forms over the reals are rational \cite[Corollary 2.5]{avilov-forms}.

There are 55 nontrivial conjugacy classes of subgroups of $\fS_6$. 
By \cite{Avilov,ChS-5}, everything is known in the {\em minimal case}, when  
$$
G=\fA_5, \quad \fS_5, \quad \fA_6, \quad \fS_6. 
$$
Namely, there are two $\fA_5$ and $\fS_5$ classes, corresponding to the {\em standard}, respectively, {\em nonstandard} embedding of these groups into $\fS_6$. 
If $G$ is a standard subgroup $\fA_5$, then $\mathrm{rk}\,\mathrm{Cl}(X_3)^G=1$ and $X_3$ is $G$-birationally super-rigid \cite{Avilov}.
This also implies that $X_3$ is $G$-birationally super-rigid if $G$ is a standard $\fS_5$, $\fA_6$ or the whole group $\fS_6$,
and the actions of these groups are not linearizable. 
Vice versa, if $G$ is a nonstandard subgroup $\fS_5$,
then  $\mathrm{rk}\,\mathrm{Cl}(X_3)^G=2$, and we have the following nonsymmetric $G$-Sarkisov link:
$$
\xymatrix{
&&V\ar@{->}[dll]_{\varphi}\ar@{->}[dr]_{\varpi}&&V^\prime\ar@{->}[dl]^{\varpi^\prime}\ar@{->}[drr]^{\varphi^\prime}\\%
\mathbb{P}^3&&&X_3&&& S}
$$
where $\varphi$ is a blow up of a $G$-orbit of length $5$,
both $\varpi$ and $\varpi^\prime$ are flopping contractions,
$S$ is a smooth Del Pezzo surface of degree $5$, 
and $\varphi^\prime$ is a $\mathbb{P}^1$-bundle.
In particular, the actions of all subgroups of the nonstandard subgroup $\fS_5$ are linearizable, 
e.g., the nonstandard subgroup $\fA_5$, the unique subgroup $C_5, \mathfrak D_5, \mathfrak F_5$.  

Next we exclude $G$-actions with $G$-fixed singular points, since then $X_3$ is $G$-birational to $\bP^3$. 
There are 25 such conjugacy classes; all such $G$ are contained in the unique class of $\fS_3^2\rtimes C_2$. 

Therefore, all subgroups of the nonstandard $\fS_5'$ or 
$\fS_3^2\rtimes C_2$  yield linearizable actions.
The remaining groups are contained in one of two nonconjugate $C_2\times \fS_4$; 
one of them preserve a plane $\Pi\subset X_3$ (this is a {\em nonstandard} subgroup of $\fS_6$), 
and another one does not preserve any plane in $X_3$ (this is a {\em standard} subgroup, conjugate to the group generated by the involution $(12)$ and permutations of the remaining indices). 

\begin{prop}
\label{prop:x3plane}
Suppose that $X_3$ contains a $G$-invariant plane $\Pi$. Then there exists the following $G$-equivariant diagram:
$$
\xymatrix{
&\widetilde{X}_3\ar[ld]_{f}\ar[rd]^{g}&\\
X_3&&X_{2,2}}
$$
where 
\begin{itemize}
\item $X_{2,2}\subset \bP^5$ is the unique singular toric complete intersections of two quadrics with six nodes,
\item 
$f$ is a small birational morphism, and
\item 
$g$ is a blowdown of the proper transform of the plane $\Pi$ to a smooth point of $X_{2,2}$.
\end{itemize}
Moreover, 
the $G$-action on $X_{2,2}$ preserves the torus in $X_{2,2}$, and $G$ fixes a point in the torus.  
\end{prop}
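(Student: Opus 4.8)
Here is a plan for proving Proposition~\ref{prop:x3plane}. The idea is to reduce to a normal form, write down the target $X_{2,2}$ explicitly, and realize the two maps both through the equivariant MMP of Section~\ref{sect:bir} and through the explicit linear system attached to $\Pi$.

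First I would normalize the plane. The $15$ planes on $X_3$ form a single $\fS_6$-orbit indexed by the perfect matchings of $\{1,\dots,6\}$; since any $G$ preserving a plane is conjugate into the stabilizer of one of them, it suffices to construct a diagram equivariant for the full plane stabilizer $G_0:=\mathrm{Stab}(\Pi)\cong C_2\times\fS_4\cong C_2\wr\fS_3$ (of order $48$), taking $\Pi=\{x_1+x_2=x_3+x_4=x_5+x_6=0\}$. Introducing $u_i:=x_{2i-1}+x_{2i}$ and $v_i:=x_{2i-1}-x_{2i}$, the relation $\sum x_i=0$ becomes $u_1+u_2+u_3=0$, and using $u_1^3+u_2^3+u_3^3=3u_1u_2u_3$ the cubic \eqref{eqn:segre} takes the form
\[
u_1u_2u_3 + u_1v_1^2+u_2v_2^2+u_3v_3^2 = 0, \qquad u_1+u_2+u_3=0,
\]
with $\Pi=\{u_1=u_2=u_3=0\}$; here $\fS_3$ permutes the three indices and the $C_2^3$ acts by $v_i\mapsto\pm v_i$. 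The target is the toric threefold $X_{2,2}=\{z_1z_2=z_3z_4=z_5z_6\}\subset\bP^5$: one checks it is a complete intersection of two quadrics, that its singular locus consists of exactly the six nodes at the coordinate points $e_1,\dots,e_6$, that it is the closure of the torus $\{z_1z_2=z_3z_4=z_5z_6\ne 0\}$, and that $C_2\wr\fS_3$ acts by permuting the three pairs and swapping within them, fixing the point $(1:\cdots:1)$ of the open orbit.

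For the diagram itself I would argue as follows. Since $\mathrm{Pic}(X_3)^{G_0}$ has rank $1$ while $\mathrm{rk}\,\mathrm{Cl}(X_3)^{G_0}=2$, the threefold $X_3$ is not $G_0\mathbb{Q}$-factorial, so a $G_0\mathbb{Q}$-factorialization provides a small $G_0$-morphism $f\colon\widetilde X_3\to X_3$ with $\mathrm{rk}\,\mathrm{Pic}(\widetilde X_3)^{G_0}=2$, placing us in the situation of diagram~\eqref{eq:G-Sarkisov}. The invariant cone $\overline{NE}(\widetilde X_3)^{G_0}$ then has two extremal rays, one contracted by $f$, and the plan is to identify the second contraction $g$ with the blow-down of the proper transform $\widetilde\Pi$. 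Its polarization is forced: from $K_{\widetilde X_3}=g^*K_{X_{2,2}}+2\widetilde\Pi$ (discrepancy $2$ of the blow-up of a smooth point) together with $-K=\cO(2)$ on both sides, one gets $g^*\cO_{X_{2,2}}(1)=f^*H+\widetilde\Pi$. I would then verify by direct computation that $h^0(X_3,\cO(H+\Pi))=6$, that this space is the $6$-dimensional permutation representation of $C_2\wr\fS_3$, and that an eigenbasis $z_1,\dots,z_6$ of quadrics vanishing on $\Pi$ satisfies the two $\fS_3$-invariant relations $z_1z_2-z_3z_4$ and $z_3z_4-z_5z_6$ modulo the ideal of $X_3$; this exhibits the induced map as a birational morphism onto $X_{2,2}$ inverse to the blow-up of $(1:\cdots:1)$.

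All steps above are canonical for $G_0$, hence equivariant for every $G\subseteq G_0$. The ten nodes of $X_3$ split into the four lying on $\Pi$ and the six lying off it; the small morphism $f$ resolves the four nodes on $\Pi$, while the remaining six survive on $\widetilde X_3$ and map isomorphically to the six nodes $e_1,\dots,e_6$ of $X_{2,2}$, and $\widetilde\Pi$ is contracted to the smooth torus point $(1:\cdots:1)$, which is $G_0$-fixed and lies in the open orbit. This yields the last two assertions, and is exactly the fixed point used in Case~(S) of Section~\ref{sect:tori}. The main obstacle I anticipate is the local analysis at the nodes: one must show that the small modification $f$ can be chosen so that $\widetilde\Pi\cong\bP^2$ with normal bundle $\cO(-1)$, and that the second extremal contraction is genuinely divisorial with image the toric $X_{2,2}$ rather than a Del Pezzo or conic-bundle fibration as permitted by~\eqref{eq:G-Sarkisov}. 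This is precisely where the explicit model does the decisive work: the direct check that the six sections $z_i$ satisfy the two quadric equations pins the target down as the unique singular toric intersection of two quadrics with six nodes, and simultaneously certifies that $f$ is small and $g$ is the asserted blow-down of $\Pi$.
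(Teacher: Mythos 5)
Your proposal is correct and is essentially the paper's argument: the paper's entire proof is the single sentence ``Unprojecting from the $G$-invariant plane, we obtain the required commutative diagram,'' and your construction via the quadrics through $\Pi$ (equivalently the linear system $|f^*H+\widetilde\Pi|$, with the new section being the unprojection variable $s=B/u_1=-A/u_2$ for $X_3=\{u_1A+u_2B=0\}$) is precisely that unprojection, written out explicitly. The normal form, the count of $4+6$ nodes, and the identification of the target with $\{z_1z_2=z_3z_4=z_5z_6\}$ (e.g.\ via the Segre symbol $[(11)(11)(11)]$ of the resulting pencil of quadrics) all check out, so your plan is a faithful, more detailed elaboration of the paper's one-line proof rather than a different route.
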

\begin{proof}
  Unprojecting from the $G$-invariant plane, we obtain the required commutative diagram.
\end{proof}

The following diagram summarizes the relations between $G$-actions on $X_3$, when $G$ is contained in one of two nonconjugate subgroups $C_2\times \fS_4$ and is not contained in the nonstandard $\fS_5'$ or $\fS_3^2\rtimes C_2$:

\

{\tiny

\centerline{
\xymatrix{
 \fS_4,\fS_4' \ar[d]& C_2\times \fS_4 (\text{no } \Pi) \ar[l]\ar[d]\ar[dr] & &  \ar[dl] C_2\times \fS_4 (\Pi)  \ar[d] \ar[r] &    \fS_4 \ar[d]    \\
  \fA_4 \ar[ddr]  &  \ar[l] C_2 \times \fA_4 & C_2\times \fD_4   \ar[d] \ar[dl] \ar[dr]\ar[drr]  & C_2 \times \fA_4 \ar[dr]  &{\color{red}U_1=C_2^2} & \\
      & \fD_4,\fD_4'\ar[d]  &   C_2^3\ar[dl]\ar[dr]  &{\color{red}W_2=C_2^3}\ar[d]&{\color{red}W_1=C_2\times C_4} \ar[dl]  \\
                      &    {\color{red}U_1=C_2^2}  & &   {\color{blue}C_2^2}     &   &        
}
}
}

\

\noindent
By Proposition~\ref{prop:x3plane}, we can identify subgroups of $\Aut(X_3)$ leaving a plane invariant with subgroups of $\Aut(X_{2,2})$ fixing the origin of the torus,
which also can be identified with finite subgroups of $\GL_3(\bZ)$ as in Section~\ref{sect:tori}.
In this way, we identified the subgroups $C_2^2, C_2^3$ and $C_2\times C_4$ in the diagram with subgroups of $\GL_3(\bZ)$, using notations from Section~\ref{sect:tori}:
\begin{itemize}
\item 
$U_1=C_2^2\subset \GL_3(\bZ)$ is the group with {\bf (H1)} obstruction to stable linearizability,
\item 
$W_1=C_2\times C_4,W_2=C_2^3$ are groups with {\bf (SP)} obstruction to stable linearizability. 
\end{itemize}
Moreover, the other subgroup $C_2^2$ in the diagram can be uniquely characterized by the following  geometric conditions: \begin{enumerate}
    \item $\mathrm{Sing}(X)$ splits as a union of five $C_2^2$-orbits of length $2$, and
    \item it is not contained in the nonstandard $\fS_5'$, and
    \item it leaves exactly three planes in $X_3$ invariant.
\end{enumerate}

\begin{coro}
\label{coro:x3noplane}
Suppose that the following conditions are satisfied:
\begin{enumerate}
\item $G$ does not fix a singular point of $X_3$, and
\item $G$ is not contained in the nonstandard $\fS_5'$, and
\item $X_3$ does not contain $G$-invariant planes.
\end{enumerate} 
Then $G$ does not satisfy {\bf (H1)}, and the $G$-action is not stably linearizable.
\end{coro}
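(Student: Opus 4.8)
The plan is to locate inside every such $G$ a subgroup belonging to a single explicit conjugacy class of $C_2^2$---the torus group $U_1\subset\GL_3(\bZ)$ of Section~\ref{sect:tori}---and to transport the nonvanishing $\rH^1(U_1,\Pic(\widetilde{X}))\ne 0$ from the toric model $\widetilde{X}$ back to $\widetilde{X}_3$.

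First I would carry out the reduction. By the case analysis preceding Proposition~\ref{prop:x3plane}, conditions (1) and (2) place $G$, up to conjugacy, inside one of the two non-conjugate copies of $C_2\times\fS_4\subset\fS_6$. Every subgroup of the nonstandard copy leaves the plane $\Pi$ invariant, so condition (3) excludes that copy; hence $G$ is conjugate into the standard copy that preserves no plane, and $G$ itself leaves no plane of $X_3$ invariant. An inspection of the subgroup diagram displayed above (confirmed by the {\tt Magma} enumeration of \cite{CTZ-tables}) then shows that the subgroups of this $C_2\times\fS_4$ preserving no plane are exactly those on the left branch of the diagram, namely $\fS_4,\fS_4',\fA_4,C_2\times\fA_4,C_2\times\fD_4,\fD_4,\fD_4'$ and $C_2^3$, and that each of them contains a subgroup $H$ conjugate to the plane-preserving torus class $U_1\simeq C_2^2$ (as opposed to the linearizable class of $C_2^2$, which is distinguished by having three invariant planes and by the splitting of $\mathrm{Sing}(X_3)$ into five orbits of length $2$).

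It remains to produce the cohomology and to transfer it. For the subgroup $H\simeq U_1$, which does leave some plane $\Pi'\subset X_3$ invariant, Proposition~\ref{prop:x3plane} provides an $H$-equivariant birational map from $X_3$ to the singular toric intersection of two quadrics $X_{2,2}$, under which $H$ is identified with $U_1\subset\GL_3(\bZ)$ acting on the torus; by the computation recorded in Step 3 of Section~\ref{sect:tori} we have $\rH^1(U_1,\Pic(\widetilde{X}))\ne 0$ on the toric resolution. Since over a finite group every permutation module has vanishing $\rH^1$, and $\Pic$ of a smooth projective model is determined up to permutation summands by the stable birational class, the nonvanishing of $\rH^1(H,\Pic(-))$ is an $H$-stable-birational invariant; as $\widetilde{X}_3$ and $\widetilde{X}$ are $H$-birational smooth projective models, I conclude $\rH^1(H,\Pic(\widetilde{X}_3))\ne 0$. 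Taking $G'=H$ in {\bf (H1)} shows that $G$ fails {\bf (H1)}, and since {\bf (SP)} implies {\bf (H1)} while {\bf (SP)} is necessary for stable linearizability, the $G$-action on $X_3$ is not stably linearizable.

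The main obstacle is the group-theoretic bookkeeping in the reduction: one must verify uniformly, over all $G$ satisfying (1)--(3), that $G$ contains the \emph{correct} conjugacy class of $C_2^2$---the plane-preserving $U_1$ rather than the linearizable class---which requires tracking the action on the planes and involutions inside $C_2\times\fS_4$. A secondary point deserving care is the transfer in the last step: one must be sure that the cohomological nonvanishing is genuinely a stable $H$-birational invariant, so that it may legitimately be read off from the toric model instead of being computed directly on $\widetilde{X}_3$.
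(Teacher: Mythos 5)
Your proof is correct and follows essentially the same route as the paper's own (one-line) argument: the paper likewise reads off from the subgroup diagram that every $G$ satisfying (1)--(3) contains the class $U_1\simeq C_2^2$ carrying the nonvanishing $\rH^1$, noting that the obstruction can alternatively be computed directly on $\Pic(\widetilde{X}_3)$ in {\tt Magma}; your explicit transfer of the cohomology from the toric model via the stable-birational invariance of $\rH^1(-,\Pic)$ is exactly the step the paper leaves implicit. One harmless slip in your bookkeeping: since the $15$ planes of $X_3$ form a set of odd cardinality, every $2$-subgroup of $\fS_6$ fixes one of them, so $C_2\times\fD_4$, $\fD_4$, $\fD_4'$ and $C_2^3$ actually fail condition (3) and are treated through the toric case --- your list is therefore an over-inclusion rather than the exact set, but this does not affect the argument, since it still contains every group satisfying (1)--(3) and each listed group does contain $U_1$.
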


\begin{proof}
By the diagram above, $G$ contains the subgroup $C_2^2$ with the {\bf(H1)} obstruction. This can also be checked directly, via {\tt Magma}.
\end{proof}

If $G$ leaves invariant a plane $\Pi\subset X_3$, the linearization problem is reduced 
to $G$-actions on a three-dimensional torus with $G$-fixed points, which was studied in \cite[Section 9]{HT-quad}, and in detail in Section~\ref{sect:tori}. 
Combining Proposition~\ref{prop:x3plane}, Corollary~\ref{coro:x3noplane} and results in Section~\ref{sect:tori}, we obtain:

\begin{theo}
\label{thm:segrelin}
The $G$-action on $X_3$ is linearizable if and only if
either
\begin{itemize}
\item $G$ fixes a singular point on $X_3$, or
\item $G$ is contained in the nonstandard $\fS_5'$, or
\item $G=C_2^2$, $X_3$ contains three $G$-invariant planes, and $\mathrm{Sing}(X)$ splits as a union of five $C_2^2$-orbits of length $2$.
\end{itemize}
Moreover, when the $G$-action is not linearizable, it is not stably linearizable.
\end{theo}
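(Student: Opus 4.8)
The plan is to prove both implications by reducing every nontrivial $G\subseteq\fS_6$ to one of the situations already analyzed: a $G$-fixed singular point, containment in the nonstandard $\fS_5'$, or the torus picture of Proposition~\ref{prop:x3plane} together with Theorem~\ref{thm:lin-tori}. First I would dispose of the \emph{if} direction. If $G$ fixes a singular point of $X_3$, projection from that point gives a $G$-equivariant birational map $X_3\dashrightarrow\bP^3$, so the action is linear. If $G\subseteq\fS_5'$, linearizability is the content of the Sarkisov link recalled above (following \cite{Avilov,HT-torsor}), which contracts a length-$5$ orbit and terminates at $\bP^3$. Finally, if $G=C_2^2$ with three invariant planes and $\mathrm{Sing}(X_3)$ splitting into five orbits of length $2$, then $X_3$ carries a $G$-invariant plane, so Proposition~\ref{prop:x3plane} produces a $G$-equivariant birational map to $X_{2,2}$, and hence to $\bG_m^3$ with a $G$-fixed point; the resulting $\Gamma=\phi(G)\subset\GL_3(\bZ)$ is the distinguished copy of $C_2^2$ that is \emph{not} conjugate to $U_1$, $W_1$, or $W_2$, so the second bullet of Theorem~\ref{thm:lin-tori} yields linearizability.

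\textbf{The \emph{only if} direction.} I would argue the contrapositive and show that, outside the three listed cases, the action is not even stably linearizable. The combinatorial input is the partition of the $55$ conjugacy classes: after removing the minimal standard groups $\fA_5,\fS_5,\fA_6,\fS_6$, the $25$ classes fixing a singular point, and the subgroups of $\fS_5'$, every remaining class lies in one of the two nonconjugate copies of $C_2\times\fS_4$ displayed in the diagram above. I would then split on the existence of a $G$-invariant plane. If $X_3$ carries no $G$-invariant plane, Corollary~\ref{coro:x3noplane} shows that $G$ contains the copy $U_1$ of $C_2^2$ and hence fails \textbf{(H1)}; since \textbf{(H1)} is a stable obstruction inherited by overgroups, the action is not stably linearizable.

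\textbf{The torus case.} If instead $X_3$ carries a $G$-invariant plane but $G$ is not the distinguished $C_2^2$, I would push the action through Proposition~\ref{prop:x3plane} onto $\bG_m^3$ and read off the conclusion from Theorem~\ref{thm:lin-tori}: every such $\Gamma=\phi(G)$ other than the distinguished $C_2^2$ contains a conjugate of $U_1$, $W_1$, or $W_2$, carrying an \textbf{(H1)} or \textbf{(SP)} obstruction. Because $\rH^1$ of a smooth model and the property \textbf{(SP)} are $G$-birational invariants, these obstructions transfer back from $X_{2,2}$ to $\widetilde X_3$ and propagate to any overgroup; this is also what covers the minimal standard groups, each of which is seen to contain a $C_2^2$ of obstruction type. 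Collating the three cases yields the stated equivalence together with the stable refinement.

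\textbf{Main obstacle.} The step I expect to be hardest is the precise group-theoretic bookkeeping matching abstract subgroups of $\fS_6$ — in particular the several conjugacy classes of $C_2^2$ — with their torus avatars $U_1,W_1,W_2$ and the distinguished linearizable $C_2^2$, and confirming the presence of an obstruction-bearing subgroup in every nonlinearizable class. This identification is exactly what separates the third linearizable case from the superficially identical $U_1$, and is most efficiently completed by the \texttt{Magma} computation of the $G$-action on $\Pic(\widetilde X_3)$ underlying \cite{CTZ-tables}, as already invoked in Corollary~\ref{coro:x3noplane}.
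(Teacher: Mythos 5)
Your proposal is correct and follows essentially the same route as the paper: projection from a fixed singular point or the $\fS_5'$ Sarkisov link for the linearizable cases, Corollary~\ref{coro:x3noplane} (the $U_1$-type \textbf{(H1)} obstruction) when no invariant plane exists, and Proposition~\ref{prop:x3plane} plus the torus analysis of Theorem~\ref{thm:lin-tori} when a plane is invariant, with the distinguished $C_2^2$ singled out as the unique surviving linearizable class. The bookkeeping you flag as the main obstacle is indeed where the paper also defers to the group diagram and the {\tt Magma} computations of \cite{CTZ-tables}.
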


\begin{proof}
   If one of the first two conditions is satisfied, then $G$ is linearizable, as explained above. If $X_3$ does not contain $G$-invariant planes, then $G$ is not stably linearizable by Corollary~\ref{coro:x3noplane}. Hence we may assume $G$ does not fix a singular point, $G$ is not contained in the nonstandard $\fS_5'$ and $X_3$ contains a $G$-invariant plane $\Pi$. By Proposition~\ref{prop:x3plane}, there then exists a $G$-equivariant birational map from $X_3$ to the toric intersection of two quadrics $X_{2,2}\subset \bP^5$. 

   Going through the group diagram above, we see $G$ is not stably linearizable when $G$ is not conjugate to the $C_2^2$ characterized in the third condition; this $C_2^2$ can be identified with the subgroup in $\GL_3(\bZ)$ generated by
$$
\begin{pmatrix}
    0&0&1\\
    -1&-1&-1\\
    1&0&0
\end{pmatrix},
\begin{pmatrix}
    -1&0&0\\
    0&-1&0\\
    0&0&-1
\end{pmatrix}.
$$
It is linearizable, as explained in Section~\ref{sect:tori}.
\end{proof}

\begin{rema}
The Burnside formalism of \cite{BnG} {\em does not} allow to decide the linearizability of the actions of both $C_2^2$ at the bottom of the lattice diagram above. On the other hand, the formalism of {\em incompressible} symbols as in Section~\ref{sect:incomp}, proves nonlinearizability in several cases; note that these cases are obstructed by {\bf(SP)}, as they contain $W_1$.  

    Let $G=C_2\times\fA_4$, generated by 
    \begin{align*}
    \iota: (x_1,x_2,x_3,x_4,x_5,x_6)\mapsto (x_3,x_5,x_1,x_6,x_2,x_3),\\
    \tau: (x_1,x_2,x_3,x_4,x_5,x_6)\mapsto (x_3,x_5,x_1,x_4,x_2,x_6),\\
    \sigma: (x_1,x_2,x_3,x_4,x_5,x_6)\mapsto (x_5,x_6,x_2,x_1,x_4,x_3).
    \end{align*}
The fixed locus for the involution $\iota$ is a plane $\Pi\subset X_3$ given by 
$$
x_1+x_3=x_2+x_5=x_4+x_6=0
$$
with a residue $\fA_4$-action on it. This produces an absolutely incompressible divisorial symbol (see Proposition~\ref{prop:P2-inc})
$$
\mathfrak s:=(C_2,\fA_4\actsfromleft k(\bP^2), (1)).
$$
The model $X_3\actsfromright G$ is not in standard form. However, $G$ does not leave invariant any irreducible subvariety of $X_3$ with nontrivial stabilizer except $\Pi$. 
Therefore, no equivariant blow-up of $X$ can possibly contribute the symbol $\mathfrak s$ to the class $[X_3\actsfromright G]^{\rm inc}$. The class then contains the incompressible symbol $\mathfrak s$ with multiplicity $1$. On the other hand, the algorithm in \cite{BnG}, implemented in \cite{TYZ}, shows that $[\mathbb P^3\actsfromright G]^{\rm inc}$ contains $\mathfrak s$ with multiplicity $2$ for any (projectively) linear action $\mathbb P^3\actsfromright G$. We conclude that this $G$-action  on $X_3$, and thus also the action of $C_2\times\fS_4$ containing $G$, are  not (projectively) linearizable.
\end{rema}

\section{Geometry of the Burkhardt quartic}
\label{sect:burk}

The Burkhardt quartic $X_4$ can be defined in $\bP^4\subset \bP^5$ by \eqref{eqn:quart},
or in $\bP^4$ by \eqref{eqn:burg}. Up to projectivity, $X_4$ is the unique quartic threefold with 45 nodes \cite{dejong},
and $\mathrm{Aut}(X_4)=\mathsf{PSp}_4(\bF_3)$ acts on $\bP^4$ via an irreducible 5-dimensional representation of its central extension~$\mathrm{Sp}(\bF_3)$.
Our goal is to identify subgroups $G\subseteq \mathsf{PSp}_4(\bF_3)$ giving rise to (projectively) linearizable actions on $X_4$. 

Arithmetic aspects of the Burkhardt quartic, in particular, its rationality over nonclosed ground fields $k$, have been explored in \cite{bruin-1,bruin-2,Calegari}. For example, the form  \eqref{eqn:burg} is rational over $\bQ$. For all forms $X$ of $X_4$ over nonclosed fields of characteristic zero there exist a dominant, degree 6, map $M\to X$, 
where $M$ is a Brauer-Severi variety of dimension 3 \cite[Theorem 1.1]{bruin-1}; 
forms arising from moduli spaces of abelian surfaces are unirational, 
in particular, their rational points are Zariski dense 
(see  \cite{Calegari} and references therein). 
It is an open problem to determine which forms $X$ are rational over $\bQ$ \cite[Question 2.9]{bruin-2};  
there certainly are $k$-forms that are not $k$-rational \cite{Calegari,bruin-1}. 

Note that \cite[Section 3]{Calegari} lists all subgroups $G\subseteq \mathsf{PSp}_4(\bF_3)$ with nontrivial cohomology 
$$
\rH^1(G, \Pic(\widetilde{X}_4)), 
$$
where $\widetilde{X}_4$ is the standard resolution of singularities of the Burkhardt quartic, denoted by 
$\mathcal A^*_2(3)$ in \cite{Calegari}. 
Recall that this is an obstruction to stable (projective) linearizability of the $G$-action. 
In particular, of the 115 conjugacy classes of nontrivial subgroups of $\mathsf{PSp}_4(\bF_3)$, 
only 26 do not have the cohomological obstruction to {\bf (H1)}. 

\begin{rema}
\label{rema:E6}
Note that 
$\mathsf{PSp}_4(\bF_3)\subset W(\mathsf E_6)$, as an index-two subgroup. If we consider the $W(\mathsf E_6)$-action on the Picard lattice of a smooth cubic surface, then, by \cite{TY}, {\em every} subgroup 
$G\subseteq W(\mathsf E_6)$ arising from a {\em minimal} 
action on a cubic surface
has nontrivial cohomology. 
\end{rema}

\begin{prop}
\label{prop:burk}
Assume that $G$ contains an involution exchanging two coordinates in $\mathbb{P}^5$. Then the $G$-action is not projectively linearizable and not equivariantly birational to the action on the Segre cubic. 
\end{prop}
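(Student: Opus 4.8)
The plan is to prove nonlinearizability by exhibiting an incompressible divisorial symbol from the involution, using the Burnside formalism of Section~\ref{sect:incomp}, and then to compare with the Segre cubic by the same local data. Let $\iota\in G$ be the involution exchanging two coordinates in $\bP^5$, say $x_i\leftrightarrow x_j$. The first step is geometric: I would locate the fixed locus of $\iota$ on $X_4$. A transposition of two coordinates in the $\fS_6$-embedding fixes a hyperplane (the $x_i=x_j$ hyperplane) and its complementary line; intersecting the fixed hyperplane with $X_4$ should cut out a surface $D\subset X_4$ which is fixed pointwise by $\iota$ and inherits a residual action of $Y:=Z_G(\iota)/\langle\iota\rangle$. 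The key geometric computation is to identify the birational type of $D$ together with its $Y$-action; I expect $D$ to be a rational surface (likely a cubic surface or a Del Pezzo surface of low degree, given the $\fS_6$-geometry), and the residual action to be large enough to be minimal.

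Next I would pass to a standard model: after equivariant resolution and blowups, $D$ contributes a divisorial symbol $(C_2, Y\actsfromleft k(D), (b))$ to the class $[X_4\actsfromright G]^{\mathrm{inc}}$. The crucial step is to show this symbol is \emph{incompressible}, using the sufficient conditions recalled in Section~\ref{sect:incomp}: either $\rH^1(Y,\Pic(D))\neq 0$, or $D$ is $Y$-birational to a $Y$-solid Del Pezzo surface, or the $Y$-action is that of a transitive subgroup of $\PGL_3$ on $\bP^2$ (Proposition~\ref{prop:P2-inc}), or $D$ is $Y$-birationally rigid. Exactly which criterion applies depends on the birational type of $(D,Y)$ found in the first step; the point is that the residual action on the fixed surface is rigid enough that the symbol cannot be cancelled. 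To conclude nonlinearizability, I would then invoke the comparison with linear actions: by \cite[Corollary 6.1]{TYZ}, the class $[\bP^3\actsfromright G]^{\mathrm{inc}}$ of any projectively linear action is computed by the algorithm of \cite{BnG}, and I would argue that an incompressible symbol of this specific type either does not appear, or appears with a different multiplicity, in $[\bP^3\actsfromright G]^{\mathrm{inc}}$ than in $[X_4\actsfromright G]^{\mathrm{inc}}$.

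For the second assertion, that the $G$-action is not equivariantly birational to the action on the Segre cubic $X_3$, the strategy is the same invariant comparison but now between two Fano threefolds. Since $[\,\cdot\,]^{\mathrm{inc}}$ is a $G$-birational invariant, it suffices to show that the symbol contributed by $\iota$ on $X_4$ does \emph{not} match the analogous contribution on $X_3$. A transposition in $\fS_6=\Aut(X_3)$ also fixes a surface in $X_3$, and I would compare the residual $Y$-action and the character $b$ of the two fixed-surface symbols: if the birational type of the fixed surface of $\iota$ on $X_4$ differs from that on $X_3$ (as $Y$-surfaces), or the characters $b$ differ, the two classes cannot agree, ruling out any $G$-equivariant birational map.

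The main obstacle I anticipate is the first step: pinning down the precise birational type of the fixed surface $D$ together with its residual $Y$-action, and verifying one of the incompressibility criteria for it. This requires an honest analysis of how the $x_i\leftrightarrow x_j$ involution meets the 45 nodes and the planes of $X_4$, and of the induced action on $\Pic(D)$ (or $\Cl$) after resolving. Once $(D,Y)$ and its symbol are identified, both the nonlinearizability and the non-birationality to $X_3$ follow from the formal properties of $[\,\cdot\,]^{\mathrm{inc}}$ together with the surface classification results quoted in Section~\ref{sect:incomp}.
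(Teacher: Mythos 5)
Your overall framework (extract a divisorial symbol from the fixed locus of the involution and show it is incompressible and absent from linear actions) is exactly the paper's strategy, but your key geometric step goes wrong. The fixed locus of the transposition $x_i\leftrightarrow x_j$ on $X_4$ is the intersection of $X_4$ with the hyperplane $\{x_i=x_j\}$, i.e.\ a \emph{quartic} surface in $\bP^3$: it is a K3 surface with $12$ nodes, not a cubic or Del Pezzo surface. Your proposal explicitly anticipates a rational surface and plans to verify incompressibility via the rational-surface criteria (transitive subgroups of $\PGL_3$, $Y$-solid Del Pezzo surfaces, $\rH^1(Y,\Pic(D))\neq 0$); none of these is the relevant tool here. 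Once $D$ is correctly identified, incompressibility is immediate from the first and simplest criterion listed in Section~\ref{sect:incomp}: $D$ is not uniruled. The same observation disposes of the comparison with linear actions, since every divisorial symbol arising from a (projectively) linear action on $\bP^3$ has uniruled support, so a K3 symbol can never occur there. Your anticipated ``main obstacle'' --- analyzing the residual action on $\Pic(D)$ of a rational surface --- is therefore based on a false premise and would lead you nowhere.

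For the second assertion your route also differs from the paper's and is more laborious than necessary. The paper simply notes that every involution in $\fS_6$ fixes a singular point of $X_3$, so every $C_2$-action on the Segre cubic is linearizable by projection from that point; a $G$-equivariant birational map $X_4\dasharrow X_3$ would restrict to a $C_2$-equivariant one and would make the $C_2$-action on $X_4$ linearizable, contradicting the first part. Your proposed symbol-by-symbol comparison can be made to work (the transposition fixes a cubic, hence rational, surface on $X_3$, which contributes no incompressible symbol, while the K3 symbol on $X_4$ does), but be careful with the logic: a compressible symbol on $X_3$ simply does not appear in $[X_3\actsfromright G]^{\mathrm{inc}}$, so the argument is that the K3 symbol has no counterpart at all, not that two symbols ``differ in birational type or character.''
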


\begin{proof}
We apply the formalism of Section~\ref{sect:incomp}. 
The involution action leads to classes 
$$
(C_2, Y\actsfromleft k(D), (1)),
$$
where $D$ is a quartic K3 surface with 12 nodes. This is an absolutely incompressible symbol, since $D$ is not uniruled. Furthermore,  
it does not arise from projectively linear actions.

On the other hand, every $C_2$-action on the Segre cubic fixes a singular point, and the action is linearizable.
\end{proof}

\begin{rema}
\label{rem:bbur}
There are 12 (conjugacy classes of) subgroups of $\mathsf{PSp}_4(\bF_3)$ containing this involution and satisfying {\bf (H1)}:
$$
C_2,C_4,C_2^2,C_6,C_6',Q_8,\fD_4,C_2\times C_6, C_{12},\SL_2(\bF_3), C_3\times Q_8, C_3\rtimes \fD_4,
$$
in particular, cohomology does not allow to distinguish
linearizability from nonlinearizability in these cases; the corresponding actions are specified in \cite{CTZ-tables}. 
The Burnside obstruction to linearizability in Proposition~\ref{prop:burk} vanishes for $X_4\times \bP^1$, with trivial action on the second factor, see, e.g.,  \cite[Section 3.5]{TYZ-3}. The {\bf (SP)}-obstruction is also trivial, at least for $G=C_2$. 
Thus we are led to speculate that the $C_2$-action on the threefold $X_4\times \bP^1$ {\em is} linearizable. 
\end{rema}

Using Proposition~\ref{prop:inv-class}, we see that \mbox{$\mathrm{rk}\,\mathrm{Cl}^G(X_4)=1$}
for $34$ out of $115$ conjugacy classes of nontrivial subgroups in $\mathsf{PSp}_4(\bF_3)$.
All these groups are not linearizable by the following result.

\begin{prop}
\label{thm:Burkhard-BSR}
Let $G\subseteq\mathrm{Aut}(X_4)$ be such that \mbox{$\mathrm{rk}\,\mathrm{Cl}(X_4)^G=1$}.
Then $X_4$ is $G$-birationally super-rigid.
\end{prop}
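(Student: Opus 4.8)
The plan is to verify the single hypothesis of Corollary~\ref{coro:NFI}, namely that for every $G$-invariant non-empty mobile linear system $\cM$ on $X_4$, the log pair $(X_4, \lambda\cM)$ has canonical singularities, where $\lambda\in\bQ_{>0}$ is chosen so that $\lambda\cM\sim_{\bQ}-K_{X_4}$. Since we are given $\mathrm{rk}\,\mathrm{Cl}(X_4)^G=1$, Corollary~\ref{coro:NFI} applies directly and reduces $G$-birational super-rigidity to this canonicity statement. So the entire proof is a Noether--Fano style argument: assume for contradiction that $(X_4,\lambda\cM)$ is not canonical along some irreducible center $Z\subseteq X_4$, and derive a contradiction from the geometry of the Burkhardt quartic and its $45$ nodes.

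First I would set up the numerics: $X_4\subset\bP^4$ is an anticanonically embedded Fano threefold with $-K_{X_4}=H$ the hyperplane class, $(-K_{X_4})^3=4$, and $45$ ordinary double points. Writing $\cM\sim_{\bQ} n H$ with $\lambda = 1/n$, I would split into cases according to the dimension of the (maximal) non-canonical center $Z$. The case $\dim Z=1$ (a non-canonical curve) is handled by the standard multiplicity-and-degree estimate: if $\cM$ has a curve of high multiplicity, intersecting with a general surface in $\cM$ and a general hyperplane section forces $\mathrm{mult}_Z\cM > n$ to violate the bound coming from $(-K_{X_4})^3=4$; one then shows such a curve cannot exist by a degree count, exactly as in the proof of \cite[Theorem~A]{CheltsovPark2010} which underlies Theorem~\ref{theo:CP}. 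The case $\dim Z=0$ is the delicate one, because the non-canonical point $Z$ could be one of the $45$ nodes or a smooth point, and at a node the local analysis of multiplicities differs (one works on the standard resolution $\widetilde{X}_4$ and tracks discrepancies of the exceptional divisors over the node). Here the $G$-action is essential: since $\mathrm{rk}\,\mathrm{Cl}(X_4)^G=1$, the $G$-orbit of any non-canonical point is a non-canonical $G$-invariant $0$-cycle, and by Proposition~\ref{prop:inv-class} the relevant $G$ contain one of the listed subgroups, which forces the orbit to be long; a long orbit of points, each carrying multiplicity $>n$, again overwhelms the intersection-theoretic bound $(-K_{X_4})^3=4$.

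The key structural input is therefore the interaction between the $G$-action and the stratification of $X_4$ by its singular locus. I would use that the $45$ nodes of the Burkhardt quartic form a single highly symmetric configuration (the $45$ tritangent-type nodes / Burkhardt's configuration), and that any $G$ with $\mathrm{rk}\,\mathrm{Cl}(X_4)^G=1$ acts on this configuration and on the associated linear systems without small invariant subsets. Concretely, one rules out isolated invariant non-canonical points by bounding the length of $G$-orbits from below via Proposition~\ref{prop:inv-class}, and rules out invariant non-canonical curves by showing $X_4$ contains no low-degree $G$-invariant curves of the requisite multiplicity (the lines and conics on $X_4$ come in $G$-orbits that are too long). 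This is precisely the content Conjecture~\ref{conj:CKS} predicts for all nodal quartics, and which the statement asserts unconditionally for the Burkhardt quartic.

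I expect the main obstacle to be the $0$-dimensional case at the nodes. Away from the nodes the argument is the textbook Noether--Fano inequality for a smooth point on a Fano threefold of small anticanonical degree, but at an ordinary double point the infinitely-near analysis is genuinely different: one must compute discrepancies on a small resolution or a weighted blowup of the node and check that a non-canonical singularity of $(X_4,\lambda\cM)$ there still produces a multiplicity bound incompatible with $\deg\cM$. The cleanest route is probably to pass to the standard resolution $\widetilde{X}_4\to X_4$ (already used throughout the paper), where $\widetilde{X}_4$ is smooth and one has explicit control of $\Pic(\widetilde{X}_4)$ and the exceptional divisors, and to run the Noether--Fano inequality upstairs, combining it with the $G$-orbit length estimates from Proposition~\ref{prop:inv-class}. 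The remaining work is then a finite, if somewhat intricate, bookkeeping of which $G$-orbits of nodes and of lines/conics on $X_4$ could a priori support a maximal singularity, and checking in each case that the orbit is too long or the multiplicity too small.
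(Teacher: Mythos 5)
Your reduction via Corollary~\ref{coro:NFI} and the three-way split on the non-canonical center (smooth point, node, curve) match the paper, and the smooth-point case is indeed the textbook $4/\lambda^2$ bound against $(-K_{X_4})^3=4$. But from that point on your strategy diverges from the paper's and, as described, has genuine gaps. The paper's proof uses \emph{no} group theory beyond the hypothesis $\mathrm{rk}\,\mathrm{Cl}(X_4)^G=1$ needed to invoke Corollary~\ref{coro:NFI}: every case is killed by the classical geometry of the $40$ Jacobi planes and the $40$ tetrahedra (Steiner hyperplane sections). At a node $Z$, one writes $\widetilde{\mathcal M}\sim_{\mathbb Q}f^*(-nK_{X_4})-aE$ with $a>n$, picks one of the $8$ Jacobi planes through $Z$ and a general line $L$ in it through $Z$, and gets $0\leqslant\widetilde M\cdot\widetilde L=n-a<0$. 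For a curve center, the degree count gives $\deg Z\leqslant 3$ but does \emph{not} finish the job; the paper then needs Lemmas~\ref{lemma:tetrahedra}--\ref{lemma:twisted-cubics} (tetrahedra disjoint from small sets of nodes, and an explicit intersection computation on a blowup excluding $\mathrm{mult}_Z(\mathcal M)>n$ for twisted cubics through $5$ or $6$ nodes) to produce a Jacobi plane meeting $Z$ in a smooth point, after which a general line in that plane gives $n\geqslant\mathrm{mult}_Z(\mathcal M)>n$.

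Your substitute for these steps --- bounding $G$-orbit lengths of nodes and of low-degree curves from below via Proposition~\ref{prop:inv-class} --- is not carried out and would fail as stated. At a node the local estimate gives roughly $(M_1\cdot M_2)_Z\geqslant 2a^2>2n^2$, so an orbit of length $\geqslant 2$ contradicts $4n^2$, but a $G$-\emph{fixed} node is not excluded by this count, and Proposition~\ref{prop:inv-class} says nothing about orbit lengths of nodes; you would have to verify separately that no relevant $G$ fixes a node, which the paper never needs to do. Similarly, for curves you assert that lines and conics on $X_4$ ``come in orbits that are too long'' without proof, and the center $Z$ need not be $G$-invariant in the first place, so the orbit-length heuristic does not obviously apply. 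The missing idea is precisely the Jacobi-plane/tetrahedron machinery: that the union of planes in $X_4$ lies in $|10(-K_{X_4})|$, that every node lies on $8$ of them, and that every curve of degree $\leqslant 3$ meets one of them in a smooth point. Without some replacement for that input, the node and curve cases remain open in your argument.
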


\begin{proof}
See Section~\ref{sect:Burk-rigid}.    
\end{proof}

Now, excluding groups with nontrivial cohomological obstructions to linearizability, those containing an involution fixing a K3 surface  in~$X_4$,
and those with $\mathrm{rk}\,\mathrm{Cl}^G(X_4)=1$,
we are left with the following tree of 12 groups, where the left column lists $\mathrm{rk}\, \mathrm{Cl}(X_4)^G$:

\ 

$$
\xymatrix{
2  &             &  & \fD_6\ar[dl]  \ar[d] \ar[dr]         & C_3\rtimes C_4  \ar[d] \ar[dr]  &                       &  \fD_5 \ar[d]\ar[ddl] \\
4  &  {\color{blue}C_9}\ar[dd] & \fS_3\ar[drrr]\ar[drr] & {\color{blue}C_2^2}\ar[drr] &     {\color{blue}C_6} \ar[dr]\ar[d]                 & {\color{blue}C_4} \ar[d]            & {\color{blue}  C_5}                   \\
8  &             &  &                 & {\color{blue} C_3'}                                      &   {\color{blue} C_2}                 &                     \\
10 &  {\color{blue}C_3}        &  &                     &                                       &                       &       
}
$$

\

Concretely, put

{\tiny
$$
\sigma_4=\begin{pmatrix}
             1&0&0&0&0\\
         0&0&0&0&q\\
         0&0&0&q^2&0\\
         0&1&0&0&0\\
         0&0&1&0&0\\
\end{pmatrix},\quad\sigma_9=\begin{pmatrix}
 2q + 1&  2q + 4     &        0   &          0     &        0\\
-2q - 1 &   q + 2     &        0   &          0    &         0\\
            0        &     0&  2q + 1 &  -q + 1 & 2q + 1\\
            0        &     0&   -q + 1 & 2q + 1 & 2q + 1\\
            0        &     0&   -q - 2 &  -q - 2&  2q + 1\\
\end{pmatrix},
$$
$$
\sigma_2=\begin{pmatrix}
                         1 &          0 &          0 &          0 &          0\\
          0 &          0 &          0 &          0 &          1\\
          0 &          0 &          0 &q^2 &          0\\
          0 &          0 &     q &          0 &          0\\
          0 &          1 &          0 &          0 &          0\\
\end{pmatrix},\quad\quad \quad\quad \quad\quad \quad \quad \,\,\,
\sigma_6=\begin{pmatrix}
            -1  &   2q &  2q^2 &           2&            2\\
           1  &     q &    q^2 &           1&             2\\
   q^2  &           2 &      q &    q^2&     q^2\\
    q^2  &          1 &     2q &   q^2&     q^2\\
   q^2  &          1 &      q &2q^2&    q + 1\\
\end{pmatrix},
$$
$$
\sigma_3=\begin{pmatrix}
               -1 &            2 &     2q &2q^2  &           2\\
       q &       q & -2q^2 &            1  &      q\\
            1 &           -2 &       q &  q^2  &           1\\
            1 &            1 &       q &  q^2  &          -2\\
  q^2 &  q^2 &            1 &    -2q  & q^2\\
\end{pmatrix},\quad \quad \quad \quad \quad \,\,\,\sigma_5=\begin{pmatrix}
               -1  &  2q &2q^2  &         2 &          2\\
  q^2  &         1 &    2q &  q^2 &  q^2\\
     q  & q^2 &          1&     2q &     q\\
     q  & q^2 &          1&      q &    2q\\
  q^2  &          2 &     q&   q^2 &  q^2\\
\end{pmatrix},
$$
}

\noindent
where $q$ is a primitive third root of unity.
The groups in the diagram are given in the above generators by: 
$$
\fD_6=\langle\sigma_2,\sigma_6\rangle, \quad C_3\rtimes C_4=\langle\sigma_3,\sigma_4\rangle,\quad \fD_5=\langle\sigma_4^2,\sigma_5\rangle,\quad C_9=\langle\sigma_9\rangle,
$$
$$
\fS_3=\langle\sigma_2,\sigma_6^2\rangle, \quad C_2^2=\langle\sigma_2,\sigma_6^3\rangle,\quad C_6=\langle\sigma_6\rangle,\quad C_4=\langle\sigma_4\rangle,
$$
$$
C_5=\langle\sigma_5\rangle,\quad C_3=\langle\sigma_9^3\rangle,\quad
C_3'=\langle\sigma_3\rangle,\quad C_2=\langle\sigma_4^2\rangle,
$$

The rest of this section is devoted to a case by case analysis of these actions, organized as follows: 
First, we recall classical linearization constructions for $C_9$ and $C_5$. 
Then we present a new linearization construction for $C_6$,
which also gives linearization of $C_2$ and $C_3$,  
and use this construction to create a $\fD_6$-equivariant birational map from $X_4$ to a smooth quadric threefold $X_2\subset\mathbb{P}^4$,
which gives a linearization for $C_2^2$.
Finally, we present a linearization construction for $C_4$.
The remaining four subgroups are 
\begin{center}
$\mathfrak{S}_3$, $\fD_5$, $\fD_6$, $C_3\rtimes C_4$.
\end{center}
We do not know whether or not they are linearizable.

To study the linearization problem for $\fD_6$ and $\mathfrak{S}_3$,
one can use their actions on the quadric $X_2$.
For $\fD_5$, recall from \cite[Example 5.8]{CKS} that there exists 
a $\fD_5$-equivariant commutative diagram \eqref{eq:G-Sarkisov-symmetric} such that $Z$ is the smooth Del Pezzo surface of degree $5$, and $\varphi$ is a conic bundle, whose discriminant curve is the union of all $(-1)$-curves in $Z$.
This might be a good model for the  study of the linearization problem for $\fD_5$. 
Before showing linearization constructions for abelian groups, we present an explicit $C_3\rtimes C_4$-equivariant birational map from $X_4$ to a fibration into quartic Del Pezzo surfaces, which could hopefully be used to study the linearization problem $C_3\rtimes C_4$.

\begin{exam}
\label{exam:C3-C4}
Let $G=\langle \sigma_3, \sigma_4\rangle\simeq C_3\rtimes C_4$. The defining 
equation of $X_4\subset\bP^4$ can be rewritten as $f_2r_2=g_2h_2$, where
{\footnotesize
\begin{align*}
    f_2= &y_1y_2+q^2y_1y_3-y_1y_4-q^2y_1y_5-q^2y_2^2+qy_2y_3+(q-1)y_2y_4+\\
         &(-q-2)y_2y_5-y_3^2+(-q-2)y_3y_4+(-2q-1)y_3y_5+q^2y_4^2-qy_4y_5+y_5^2,\\
    g_2= &y_1^2 -q^2y_1y_4 - qy_1y_5 - qy_2^2 + y_2y_3 -q^2y_2y_4 -q^2y_2y_5-q^2y_3^2-q^2y_3y_4-y_3y_5,\\
r_2= &qy_1y_4 + y_1y_5 + y_2^2 -q^2y_2y_3 + qy_3^2,\\
h_2= &y_1^2 + q^2y_1y_4 + qy_1y_5 + qy_2^2 - y_2y_3 - y_2y_4 -qy_2y_5+ q^2y_3^2- qy_3y_4 - qy_3y_5.
\end{align*}}
\noindent The surfaces $\{f_2=h_2=0\}$ and $\{g_2=r_2=0\}$ generate a $G$-invariant pencil,
and there exists a $G$-equivariant birational morphism $\widetilde{X}_4\to X_4$,
where 
$$
\widetilde{X}_4=\{f_2u+g_2v=h_2u+r_2v=0\}\subset\bP^4\times\bP^1_{u,v},
$$
and the $G$-action on  $\bP^4\times\bP^1$ is generated by
$$
    \sigma_4\times\begin{pmatrix}
      1&0\\
      q^2&-1
    \end{pmatrix}_,\quad
    \sigma_3\times\begin{pmatrix}
      1&-2q-1\\
      0&q
\end{pmatrix}_.
$$
The birational morphism $\widetilde{X}_4\to X_4$ is given by the projection to the first factor of $\bP^4\times\bP^1$,
while the projection to the second factor $\widetilde{X}_4\to\mathbb{P}^1$ is a $G$-equivariant fibration into Del Pezzo surfaces of degree $4$. 
\end{exam}

\subsection*{Linearization of $C_9$}
By \cite[Remark 4.3] {bruin-2}, the classical  parametrization of the Burkhardt quartic by 
Baker is $C_9$-equivariant, i.e., the action of $C_9$ pulls back to an action on $\bP^3$.  
An explicit description of the base locus of the birational map $\phi:\bP^3 \dashrightarrow X_4$ can be found in \cite{finkelberg}:

Consider a configuration in $\bP^3$ consisting of 9 lines $l_1,...,l_9$ with $l_i$ meeting $l_{i+1}$ in a point $p_i$, and $l_9$ meeting $l_1$ in $p_9$. The points $\{p_1,p_4,p_7\}, 
\{p_2,p_5,p_8\}$ and 
$\{p_3,p_6,p_9\}$ define lines that intersect in a common point $p_{10}$ and 
$$
l_1\cap l_4\cap l_7, \quad l_2 \cap l_5\cap l_8, \quad l_3\cap l_6\cap l_9
$$ 
define a further 3 points. There is a unique such configuration, modulo $\PGL_4$, 
and the linear system of all quartic surfaces in $\bP^3$ containing these $9$ lines gives $\phi$.
The symmetry group of this configuration is indeed $C_9$.

\subsection*{Linearization of $C_5$}
There is also another parametrization, due to Todd \cite{todd}, and described in detail in \cite[Section 5.1]{pettersen}. There is a rigid configuration of 10 lines and 15 points in $\bP^3$, and the linear system of quartic surfaces passing through these 10 lines gives a birational map from $\bP^3$ to the Burkhardt quartic \cite[Figure 5.1]{pettersen}. For example, the $10$ lines can be given by equations and visualized as follows
\begin{align*}
l_1&=\{x_0 - x_3=x_1 + q^2x_2 + qx_3=0\}, &l_2=\{x_2=x_3=0\},\\
l_3&=\{x_1=x_2 + q^2x_3=0\}, &l_4=\{x_1=x_2=x_3\},\\
l_5&=\{x_0 + q^2x_3=x_2 + q^2x_3\}, &l_6=\{x_0=x_1=x_2\},\\
l_7&=\{x_0 + q^2x_3=x_1 - x_3\},&l_8=\{x_0=x_1=0\},\\
l_9&=\{x_0 + q^2x_1=x_2\}, &l_{10}=\{x_0=x_3=0\}.
\end{align*}

{\begin{center}
\begin{tikzpicture}[scale=1.3, dot/.style={circle,inner sep=1pt,fill,label={#1},name=#1},
 extended line/.style={shorten >=-#1,shorten <=-#1},
 extended line/.default=1cm]
 
\textbf{}
\coordinate (p1) at (-1.68,0);
\coordinate (p2) at ($(p1)+(36:2cm)$);
\coordinate (p3) at ($(p2)+(360-36:2cm)$);
\coordinate (p4) at ($(p3)+(180+72:2cm)$);
\coordinate (p5) at ($(p4)+(180:2cm)$);
\coordinate (p1m) at  ($(p1)+(180+36:0.2cm)$);
\coordinate (p2m) at  ($(p2)+(180-36:0.2cm)$);
\coordinate (p3m) at  ($(p3)+(72:0.2cm)$);
\coordinate (p4m) at  ($(p4)+(0:0.2cm)$);
\coordinate (p5m) at  ($(p5)+(360-72:0.2cm)$);
\coordinate (p1p) at  ($(p1)+(36:0.5cm)$);
\coordinate (p2p) at  ($(p2)+(360-36:0.5cm)$);
\coordinate (p3p) at  ($(p3)+(180+72:0.5cm)$);
\coordinate (p4p) at  ($(p4)+(180:0.5cm)$);
\coordinate (p5p) at  ($(p5)+(108:0.5cm)$);

\draw [name path=l1](p1m) --node[above] {\tiny $l_4$} ++ (36:2.4cm) ;
\draw [name path=l2](p2m) -- node[above] {\tiny $l_6$}++ (360-36:2.4cm) ;
\draw [name path=l3](p3m) -- node[right] {\tiny $l_8$}++ (180+72:2.4cm) ;
\draw [name path=l4](p4m) --node[below] {\tiny $l_{10}$} ++ (180:2.4cm) ;
\draw [name path=l5](p5m) --node[left] {\tiny $l_2$} ++ (108:2.4cm) ;
\draw [name path=k4,blue][extended line=0.2cm] (p4p) -- (p2);
\draw [name path=k1,red][extended line=0.2cm] (p1p) -- (p4);
\draw [name path=k5,red][extended line=0.2cm]  (p5p) -- (p3);
\draw [name path=k2][extended line=0.2cm] (p2p) -- (p5);
\draw [name path=k3,blue][extended line=0.2cm] (p3p) -- (p1);
\path [name intersections={of=k3 and k1,by=A}];
\path [name intersections={of=k2 and k4,by=B}];
\path [name intersections={of=k3 and k5,by=C}];
\path [name intersections={of=k4 and k1,by=D}];
\path [name intersections={of=k2 and k5,by=E}];
\path [name intersections={of=k3 and k2,by=F}];
\path [name intersections={of=k3 and k4,by=G}];
\path [name intersections={of=k4 and k5,by=H}];
\path [name intersections={of=k5 and k1,by=I}];
\path [name intersections={of=k2 and k1,by=J}];
\draw[white,fill=white] (A) circle (.6ex);
\draw[white,fill=white] (B) circle (.6ex);
\draw[white,fill=white] (C) circle (.6ex);
\draw[white,fill=white] (D) circle (.6ex);
\draw[white,fill=white] (E) circle (.6ex);
\draw (p1) [blue]-- node[below] {\tiny \color{blue}$l_3$} (F); 
\draw (p2) [blue]--node[right] {\tiny $l_1$} (G); 
\draw [red] (p3) -- node[below] {\tiny $l_9$} (H); 
\draw [red](p4) -- node[left] {\tiny $l_7$} (I); 
\draw (p5) -- node[right] {\tiny $l_{5}$} (J); 
\end{tikzpicture}
\end{center}}
\noindent The symmetry group of this configuration is $C_5$, and there are exactly two such configurations in $\bP^3$, swapped by a $\fD_5$-action on $\bP^3$.

\subsection*{Linearization of $C_6$}
Let $G=\langle\sigma_6\rangle\simeq C_6\subset\Aut(X_4)$. Fix the following eight Jacobi planes:{\footnotesize{\begin{align*}
\Pi_1&=\{y_1=y_5=0\},\\
\Pi_2&=\{y_1=y_2=0\},\\
\Pi_3&=\{(-q+1)y_1+(-2q-1)y_5=qy_1-q^2y_2-y_3-y_4-q^2y_5=0\},\\
\Pi_{16}&=\{qy_1-y_2-y_3-qy_4-y_5=qy_1-q^2y_2-y_3-y_4-q^2y_5=0\},\\
\Pi_{18}&=\{qy_1-y_2-qy_3-y_4-y_5=qy_1-q^2y_2-y_3-y_4-q^2y_5=0\},\\
\Pi_{22}&=\{q^2y_1-q^2y_2-y_3-y_4-y_5=(q+2)y_1+(-q-2)y_2=0\},\\
\Pi_{25}&=\{y_1-y_5=y_2+q^2y_3+qy_4=0\},\\
\Pi_{34}&=\{q^2y_1-y_2-qy_3-q^2y_4-q^2y_5=qy_1-qy_2-y_3-y_4-y_5=0\},
\end{align*}}}

\noindent
and let 
$D_8=\Pi_1+\Pi_2+\Pi_3+\Pi_{16}+\Pi_{18}+\Pi_{22}+\Pi_{25}+\Pi_{34}$. 
Here we keep the enumeration of Jacobi planes in $X_4$ as set in {\tt Magma}, and recorded in \cite{CTZ-tables}. 
Let
$$
\mathcal{M}_4=|4(-K_{X_4})-D_8|,
$$
it is a $G$-invariant four-dimensional mobile linear system,
since  the class $[D_8]\in\mathrm{Cl}(X_4)$ is $G$-invariant.
Choosing an appropriate basis $f_1,\ldots,f_5$ of $\cM_4$ (see \cite{CTZ-tables} for explicit equations of polynomials of the choice), we obtain an explicit rational map $X_4\dasharrow\mathbb{P}^4$ given by 
$$
[y_1:y_2:y_3:y_4:y_5]\mapsto [f_1:f_2:f_3:f_4:f_5],
$$
whose image is a quadric threefold $Q\subset\mathbb{P}^4$ with equation
\begin{align*}
&3y_1^2-3y_1y_2+3y_2^2+(2q-2)y_1y_3+(-4q-2)y_2y_3+\\
&(5q+4)y_1y_4+(-q-5)y_2y_4+(-q-2)y_3y_4+(2q+1)y_4^2+\\
&(-q+1)y_1y_5+(2q+1)y_2y_5+(q-1)y_3y_5+3y_4y_5+(-2q-1)y_5^2=0.
\end{align*}
Note that $Q$ is a quadric cone with vertex at  $[0:0:q^2:q:1]$.

We have constructed a $G$-equivariant rational map $\chi\colon X_4\dasharrow Q$,
where the induced $G$-action on $Q$ is given by the projective transformation
$$
\begin{pmatrix}
    -3-3 q  & 0 & q -1 & q +2 & -2 q -1 \\
 3 & 0 & q +2 & 4 q +2 & 4 q +5\\
 1 & q +1 & -2 q +2 & -2 q -1 & q +2 \\
 2 q +2 & -q -3 & 2 q -2 & 2 q -2 & -q +1 \\
 q +3 & 3 q -1 & 3 q +3 & 6 q +3 & 3 
\end{pmatrix}.
$$
Using {\tt Magma}, one can check that $\chi$ is birational.
Note that $G$ fixes the point 
\begin{align}\label{eq:C6fixedpoint}
  [-2+q:-3q-8:3q+1:3q+1:7]\in Q 
\end{align}
Thus, composing $\chi$ with the projection $Q\dasharrow\mathbb{P}^3$ from this point,
we obtain a $G$-equivariant birational map $X_4\dasharrow \mathbb{P}^3$, which gives a linearization of the subgroup $G\simeq C_6$.

The linearization of $G$ can also be proved as follows.
Consider the following six lines in $\mathbb{P}^4$:
\begin{align*}
L_1&=\{y_4-y_5=y_3-y_5=y_1+qy_2=0\},\\
L_2&=\{2y_3-y_4-y_5=2y_2-y_4+y_5=2y_1-q^2y_4+q^2y_5=0\},\\
L_3&=\{y_4-qy_5=2y_3+q^2y_5=y_1+q^2y_2-(q+2)y_5=0\},\\
L_4&=\{y_4+q^2y_5=y_3=y_1+q^2y_2+qy_5=0\},\\
L_5&=\{y_3+qy_4+y_5=y_2-y_4-q^2y_5=y_1+2qy_4+(q+2)y_5=0\},\\
L_6&=\{y_4-qy_5=y_3-qy_5=y_1+qy_2=0\}.
\end{align*}
They are contained in the quadric cone $Q$, and they form a hexagon.
Now, consider the following two conics in $Q$: 
\begin{align*}
R&=\{2y_3-q^2y_4-qy_5=2y_1+2qy_2-q^2y_4+qy_5=0\}\cap Q,\\
R^\prime&=\{y_3-y_4=y_1+q^2y_2+qy_4-qy_5=0\}\cap Q.
\end{align*}
Both $R$ and $R^\prime$ are smooth, they intersect transversally at the $G$-fixed point \eqref{eq:C6fixedpoint},
and they do not contain the singular point of the cone $Q$.
Note that 
\begin{itemize}
\item $R$ contains the intersection points $L_2\cap L_3$, $L_4\cap L_5$, $L_6\cap L_1$,
\item $R^\prime$ contains the intersection points  $L_1\cap L_2$, $L_3\cap L_4$, $L_5\cap L_6$.
\end{itemize}
Let $Z$ be the curve $L_1+\cdots+L_6+R+R^\prime$.
Then $Z$ is a (singular) $G$-invariant curve of degree $10$ and arithmetic genus $8$, which can be visualized as follows:
{\begin{center}
\begin{tikzpicture}
   \newdimen\R
   \R=2cm
   \draw (0:\R) \foreach \x in {60,120,...,360} {  -- (\x:\R) };
   \draw (-1.3,1.732) -- (-1,1.732) ;
   \draw (1,1.732) -- (1.3,1.732) ;
   \draw (1,-1.732) -- (1.3,-1.732) ;
   \draw (-1,-1.732) -- (-1.3,-1.732) ;
   \draw (-2,0) -- (-2.15,-0.259) ;
   \draw (-2,0) -- (-2.15,0.259) ;
   \draw (-1,1.732) -- (-0.85,1.991) ;
   \draw (-1,-1.732) -- (-0.85,-1.991) ;
   \draw (1,1.732) -- (0.85,1.991) ;
   \draw (1,-1.732) -- (0.85,-1.991) ;
    \draw (2,0) -- (2.15,-0.259) ;
   \draw (2,0) -- (2.15,0.259) ;

    \node at (-2,0)[circle,fill=black, inner sep=1.2pt, label=left:]{};
    \node at (-0.05,1.55)[label=above:{\small $L_1$}]{};
    \node at (-1,1.732)[circle,fill=black, inner sep=1.2pt, label=left:]{};
    \node at (1.25,1.0)[label=right:{\small $L_2$}]{};
    \node at (1.25,-1.0)[label=right:{\small $L_3$}]{};
    \node at (-0.05,-1.55)[label=below:{\small $L_4$}]{};
    \node at (-1.25,-1.0)[label=left:{\small $L_5$}]{};
    \node at (-1.25,1.0)[label=left:{\small $L_6$}]{};

    \node at (1,1.732)[circle,fill=black, inner sep=1.2pt, label=right:]{};
    \node at (2,0)[circle,fill=black, inner sep=1.2pt, label=right:]{};
    \node at (1,-1.732)[circle,fill=black, inner sep=1.2pt, label=below:]{};
     \node at (-2.05,2.2)[label={[blue]left:${\small R'}$}]{};
     \node at (2.3,-1)[ label={[red]right:${\small R}$}]{};

    \node at (-1,-1.732)[circle,fill=black, inner sep=1.2pt, label=left:]{};
    \node at (2,-2)[circle,fill=black, inner sep=1.2pt, label=below:]{};
    
\draw[blue] plot[smooth] coordinates 
{(0.8,1.2) (1,1.732) (0.9,2.2)(0.6,2.5)(0,2.7) (-1,2.7) (-2,2.2)(-2.4,1.5)(-2,0) (1,-1.732) (3.5,-2.2)};
\draw[red] plot[smooth] coordinates 
{(-1.2,2.2)(-1,1.732)(0,0.5)(2,0)(2.5,-1)(2,-2)(1,-3)(0,-3)(-0.8,-2.4)(-1,-1.732)(-0.8,-1.3)};
\end{tikzpicture}
\end{center}}
Let $\mathcal{M}_3$ be the linear subsystem in $|\mathcal{O}_{Q}(3)|$ consisting of all surfaces that contain $Z$.
Then $\mathcal{M}_3$ is a $G$-invariant mobile four-dimensional linear system, 
and it gives a $G$-equivariant rational map $Q\dasharrow \mathbb{P}^4$ 
whose image is an irreducible quartic threefold $X_4^\prime$ that has $45$ nodes.
This gives $X_4^\prime\simeq X_4$ \cite{dejong},
so choosing a suitable basis for $\mathcal{M}_3$ (see \cite{CTZ-tables} for equations of the choice), 
we get $X_4'=X_4$. 

This gives us a $G$-equivariant rational map $\rho\colon Q\dasharrow X_4$.
One can check that $\rho$ is birational.
Moreover, choosing a suitable basis of $\mathcal{M}_3$, we get $\rho=\chi^{-1}$.
We conclude that $G$ is linearizable;
$\mathsf{PSp}_4(\bF_3)$ contains four subgroups isomorphic to $C_6$ (up to conjugation),
and we already proved that three of them are not linearizable.

Note that the indeterminacy of $\rho$ can be resolved via the following $G$-equivariant commutative diagram:
\begin{equation}
\label{eq:C6}
\xymatrix{
U\ar@{->}[d]_{\eta}\ar@{->}[rr]^{\pi}&&V\ar@{->}[d]^{\phi}\\%
X_4&&Q\ar@{-->}[ll]^{\rho}}
\end{equation}
where $\phi$ is a blow up of the singular curve $R+R^\prime$, 
$\pi$ is a blow up of a nodal curve of arithmetic genus $1$ that is a union of the proper transforms of the lines $L_1,\ldots,L_6$
and the fibers of the morphism $\phi$ over the points 
$L_1\cap L_2$, $L_2\cap L_3$, $L_3\cap L_4$, $L_4\cap L_5$, $L_5\cap L_6$, $L_6\cap L_1$,
and $\eta$ is a birational morphism
that contracts $31$ disjoint  curves to $31$ nodes of the quartic $X_4$.
The threefold $V$ has two nodes, and $U$ has $14$ nodes, since the curves blown up by $\pi$ form a dodecagon.

\subsection*{Linearization of $C_2^2$}
In the previous subsection, we presented an explicit $C_6$-equivariant birational map $\chi\colon X_4\dasharrow Q$,
where $Q$ is a quadric cone in $\mathbb{P}^4$.
Since $C_6$ fixes a point in $Q$, this gave us a linearization of~$C_6$.
Let us use $\chi$ to construct a $\fD_6$-birational map from $X_4$ to a smooth quadric threefold in $\mathbb{P}^4$,
which will give us a  linearization of  $C_2^2\subset\fD_6$.
Set $G=\langle \sigma_6,\sigma_2\rangle\simeq\mathfrak{D}_6$. Recall that our birational map $\chi$ is $\langle\sigma_6\rangle$-equivariant,
but not $G$-equivariant,
since the involution $\sigma_2$ acts birationally on the quadric cone $Q$ via
$$
[y_1:y_2:y_3:y_4:y_5]\mapsto [t_1:t_2:t_3:t_4:t_5],
$$
where 
\begin{multline*}
t_1=19y_1y_3+(18q+7)y_1y_4+(-6q+4)y_1y_5+(-5q-3)y_2y_3+\\
(-5q-3)y_2y_4+(4q-9)y_2y_5+(2q-14)y_3^2+(2q+5)y_3y_4+\\
(11q+18)y_3y_5+(3q-2)y_4^2+(-9q+6)y_4y_5+(-9q-13)y_5^2,
\end{multline*}
\begin{multline*}
t_2=(3q+17)y_1y_3+(6q-4)y_1y_4+(-3q+2)y_1y_5+(-q-12)y_2y_3+\\
(-10q-6)y_2y_4+(-q-12)y_2y_5+(4q-28)y_3^2+(-5q+16)y_3y_4+\\
19qy_3y_5+(12q+11)y_4^2+(12-18q)y_4y_5+(-12q-11)y_5^2,
\end{multline*}
\begin{multline*}
t_3=(12q+11)y_1y_3+(-2q-5)y_1y_4+(5q+3)y_1y_5+\\
(-q-12)y_2y_3+(-3q+2)y_2y_4+(-2q-5)y_2y_5+(8q+1)y_3^2+\\
(-2q-5)y_3y_4+(2q+5)y_3y_5+(-3q+2)y_4^2+(-5q-3)y_5^2,
\end{multline*}
\begin{multline*}
t_4=(20q+12)y_1y_3+(-9q-13)y_1y_4+(4q+10)y_1y_5+\\
(-2q-5)y_2y_3+(5q+3)y_2y_4+(-9q-13)y_2y_5+(8q+1)y_3^2+(-9q-13)y_3y_4+\\
(12q-8)y_3y_5+(-4q+9)y_4^2+(6q+15)y_4y_5+(-13q-4)y_5^2,
\end{multline*}
\begin{multline*}
t_5=(q+1)y_1y_3+(-q-12)y_1y_4+(-3q+2)y_1y_5+(-9q-13)y_2y_3+\\
(4q+10)y_2y_4+(-q-12)y_2y_5+(-10q+13)y_3^2+(23q-9)y_3y_4+\\
(10q+6)y_3y_5+(-17q-14)y_4^2+(6q+15)y_4y_5+(-12q-11)y_5^2.
\end{multline*}
However, the linear system $\mathcal{M}_3\subset|\mathcal{O}_{Q}(3)|$ constructed in the previous subsection is $G$-invariant,
and  $G$ acts biregularly on the threefold $V$.
Thus, the birational map $\pi\circ\eta^{-1}\colon X_4\dasharrow V$ in the diagram \eqref{eq:C6} is also $G$-equivariant.
Observe that $\mathrm{rk}\,\mathrm{Cl}(V)^{G}=\mathrm{rk}\,\mathrm{Pic}(V)^{G}=1$, 
so $V$ is a $G$-Mori fiber space, cf. \cite[Theorem~6.5(iii)]{Prokhorov2013}.

Now, let $\mathrm{pr}\colon Q\dasharrow\mathbb{P}^2$ be the projection of the quadric cone $Q$ from the line 
passing through the vertex of $Q$ and the $G$-fixed point \eqref{eq:C6fixedpoint}. 
Choosing appropriate coordinates on $\mathbb{P}^2$, the projection map $\mathrm{pr}$ is given by
$$
\begin{pmatrix}
    3&0&0\\
    0&3&0\\
    0&0&3\\
    q+2&5q+4&-6q\\
    -q+1&q+5&3q^2
\end{pmatrix}.
$$
One can check that 
$$
\mathrm{pr}\times(\mathrm{pr}\circ \iota)\colon Q\dasharrow \bP^2_{x_0,x_1,x_2}\times\bP^2_{z_0,z_1,z_2}
$$
gives a birational map $\varrho\colon Q\to W$, where $W\subset \bP^2\times\bP^2$ is a smooth divisor of bidegree $(1,1)$ 
that is given by
\begin{align*}
&6x_0z_0-3x_0z_1+2(q-1)x_0z_2-3x_1z_0-3x_1z_1+(5q+4)x_1z_2\\
+&2(q-1)x_2z_0+(5q+4)x_2z_1+(-2q+5)x_2z_2=0.    
\end{align*}
The birational map $\varrho$ is $G$-equivariant,
and \mbox{$\mathrm{rk}\,\mathrm{Cl}(W)^{G}=1$}, so $W$ is a $G$-Mori fiber space.
This gives the $G$-equivariant commutative diagram:
$$
\xymatrix{
&\widetilde{V}\ar@{-->}[rr]\ar@{->}[dl]_{\alpha}&&\widetilde{W}\ar@{->}[dr]^{\beta}&\\%
V\ar@{-->}[rrrr]^{\varrho}&&&& W}
$$
where $\alpha$ is the blowup of both singular points of $V$,  
$\widetilde{V}\dasharrow\widetilde{W}$ is a flop in the strict transform of the line in $Q$ passing through
it vertex and the point \eqref{eq:C6fixedpoint}, and $\beta$ is a blow up of a $G$-irreducible smooth curve
consisting of four irreducible components such that one of them is 
the fiber of the projection to the second factor $W\to\mathbb{P}^2$ over $[1:3q + 2:3]$.

The group $G$ leaves invariant the curve 
$$
\{ 3x_1-(3q+2)x_2=3z_1-(3q+2)z_2=0\}\subset W,
$$
which is a curve of degree $(1,1)$. 
Blowing up this curve $\gamma\colon\widehat{W}\to W$,
we obtain the following (classical) $G$-Sarkisov link:
$$
\xymatrix{
&\widehat{W}\ar@{-->}[rr]\ar@{->}[dl]_{\gamma}&&\widehat{X}_2\ar@{->}[dr]^{\delta}&\\%
W&&&& X_2}
$$
where $X_2$ is a smooth quadric 3-fold in $\mathbb{P}^4$, and $\delta$ is a blow up of two disjoint lines in $X_2$. 

To describe the map $W\dasharrow X_2$ explicitly, observe that $G$
leaves invariant the affine chart of $W$ given by 
$$3x_1-(3q+2)x_2\ne 0\quad \text{ and } \quad 3z_1-(3q+2)z_2\ne0,
$$
which is an affine quadric 3-fold, 
whose $G$-equivariant compactification is $X_2$. 
Hence, choosing appropriate coordinates on $\mathbb{P}^4$, we may assume that $X_2$ is given by 
$$
y_1y_4-y_2y_3+y_5^2=0,
$$
and the induced $G$-action on $X_2$ is generated by 
\begin{align*}
[y_1:y_2:y_3:y_4:y_5]&\mapsto [-q^2y_1: q^2y_2: qy_3: -qy_4: -y_5],\\
[y_1:y_2:y_3:y_4:y_5]&\mapsto [y_4:y_3:y_2:y_1:y_5].
\end{align*}
In particular, we see that the subgroup $\langle\sigma_2,\sigma_6^3\rangle\simeq C_2^2$ is linearizable,
because the corresponding $C_2^2$-action on $X_2$ has a fixed point.

\subsection*{Linearization of $C_4$}
Now, we let $G\simeq C_4$ be the subgroup in $\mathsf{PSp}_4(\bF_3)$ generated by $\sigma_4$. Consider the $G$-orbit of four planes given by 
{\footnotesize\begin{align*}
    \Pi_3&=\{(-q + 1)y_1+(-2q - 1)y_5=0,
    qy_1 -q^2y_2 - y_3 - y_4 -q^2y_5=0\},\\
    \Pi_{8}&=\{q^2y_1 - qy_2 -q^2y_3 - qy_4 - qy_5=0,
    (q + 2)y_1 + (2q + 1)y_2=0\},\\
    \Pi_{12}&=\{q^2y_1 - qy_2 - qy_3 -q^2y_4 -qy_5=0,
    (q + 2)y_1 + (2q + 1)y_5=0\},\\
     \Pi_{15}&=\{q^2y_1 - qy_2 - y_3 - y_4 - qy_5=0, qy_1-q^2y_2 - qy_3 -q^2y_4-q^2y_5=0\},
\end{align*}}

\noindent
where $q$ is a primitive cube root of unity, as above.
Then projection from each of these planes produces a map $X_4\dasharrow \bP^1$. 
The product of these four projections results in a $G$-equivariant map
$$
\pi: X_4\dasharrow \bP^1_{x_0,x_1}\times\bP^1_{z_0,z_1}\times\bP^1_{u_0,u_1}\times\bP^1_{t_0,t_1}.
$$
One can check that this map is birational onto its image. Choosing appropriate coordinates, the image $V$ is a divisor in $(\bP^1)^4$ given by 
$$
x_0z_0u_0t_0 + x_0z_0u_0t_1 + x_0z_1u_0t_1 + x_1z_1u_0t_1 + x_1z_1u_1t_1=0.
$$ 
Notice that $V$ has $5$ singular points, which are ordinary double points. 
Observe also that $\mathrm{rk}\,\mathrm{Pic}(V)^{G}=1$, but $\mathrm{rk}\,\mathrm{Cl}(V)^{G}=2$, so $V$ is not a $G$-Mori fiber space. 

Let us find a $G\mathbb{Q}$-factorialization of $V$.
To do this, we choose another $G$-orbit of four planes in $X_4$:
{\footnotesize
\begin{align*}
    \Pi_1&=\{(-q + 1)y_1 + (-2q - 1)y_5=(q + 2)y_1 + (2q + 1)y_5=0\},    \\
    \Pi_{5}&=\{(-q + 1)y_1 + (-2q - 1)y_5=0, qy_1 + (q + 1)y_2 - qy_3 -q^2y_4 -q^2y_5=0\},\\
    \Pi_{14}&=\{q^2y_1 - qy_2 -q^2y_3 - qy_4 - qy_5=0, (q + 2)y_1 + (2q + 1)y_5=0\},\\ \Pi_{20}&=\{q^2y_1 - qy_2 -q^2y_3 - qy_4 - qy_5=0, qy_1 -q^2y_2 - qy_3 -q^2y_4 -q^2y_5=0\}.
\end{align*}
}
One can check they are not contracted under the map $\pi$,
and the $G$-invariant divisor $\pi(\Pi_1)+\pi(\Pi_{5})+\pi(\Pi_{14})+\pi(\Pi_{20})$ is not $\mathbb{Q}$-Cartier.
The linear system 
$$
|-K_V-\pi(\Pi_1)-\pi(\Pi_5)-\pi(\Pi_{14})-\pi(\Pi_{20})|
$$
is $G$-invariant, and its projective dimension is $5$. 
Moreover, under the choice of basis 
$$
x_0z_0u_0t_1,\,\, \,x_0z_1u_0t_0,\,\,\, x_0z_1u_0t_1, \,\,\,x_0z_1u_1t_1,\,\,\, x_1z_1u_0t_1,\,\,\, x_1z_1u_1t_1,
$$
it gives a $G$-equivariant birational map $\rho: V\dasharrow X_{2,2}$,
where $X_{2,2}$ is a complete intersection in $\bP^5$, with equations:
$$
v_4v_5 - v_3v_6=v_1v_2 + v_1v_3 + v_3^2 + v_3v_5 + v_4v_5=0.
$$
The induced $G$-action on $X_{2,2}$ is given by 
$$
\begin{pmatrix}
          0&           0 &          0 &          1&           0&           0\\
          0&           0 &          0 &          0&           1&          -1\\
          0&  -q^2&          -1 &          1&           1 &          0\\
          0&      q&           0&          0&           0 &          0\\
q^2  &         0&           0&           0&           0 &          1\\
          0&           0&           0 &          0 &          0 &          1
\end{pmatrix}.
$$
Note that this $X_{2,2}$ has $5$ singular points and is not toric, so it is different from the toric intersection of two quadrics with $6$ singular points that appeared in Section~\ref{sect:tori} and ~\ref{sect:segre}. 

The birational map $\rho$ fits the following $G$-equivariant commutative diagram:
$$
\xymatrix{
&\widetilde{V}\ar@{->}[dl]_\alpha\ar@{->}[dr]^\beta&&\widehat{V}\ar@{->}[dr]^\delta\ar@{->}[dl]_\gamma\\%
\bP^1&&V\ar@{-->}[rr]^{\rho}&& X_{2,2}}
$$
where $\beta$ and $\gamma$ are small $G$-equivariant birational morphisms
that resolve $4$ singular points of $V$ forming one $G$-orbit,
$\delta$ is a blow up of $4$ singular points of $V$ forming one $G$-orbit,
and $\alpha$ is a fibration into Del Pezzo surfaces of degree $4$.
The small birational morphisms $\beta$ and $\gamma$ are $G\mathbb{Q}$-factorializations of $V$,
and the composition $\alpha\circ\beta^{-1}\circ\gamma\circ\delta^{-1}$ is 
given by the projection $\mathbb{P}^5\dasharrow\mathbb{P}^1$ from the three-dimensional linear subspace in $\mathbb{P}^5$ that contains $4$ singular points of $X_{2,2}$ blown up by $\delta$.
A similar $G$-Sarkisov link appeared in the proof of \cite[Lemma~2.16]{Avilov2016}.

Now, we let $P=[0:0:0:0:1]$. Then $P$ is a $G$-fixed singular point of $X_{2,2}$.
Projection from $P$ gives a $G$-birational map from $X_{2,2}$ to a smooth quadric threefold $X_2\subset\bP^4$
that fits the following $G$-equivariant commutative diagram:
$$
\xymatrix{
&Y\ar@{->}[dl]\ar@{->}[dr]&\\%
X_{2,2}\ar@{-->}[rr]&&  X_{2}}
$$
where $Y\to X_{2,2}$ is the blowup of the point $P$,
and $Y\to X_{2}$ is the blow up of a singular connected curve of arithmetic genus $1$ and degree $4$,
which is a union of four lines. Note that $Y$ is a singular Fano threefold in the deformation family \textnumero 2.23,
and the constructed $G$-Sarkisov link is a degeneration of a classical Sarkisov link
that blows up  a smooth quadric threefold along a smooth quartic elliptic curve.

Since the $G$-action on a smooth quadric threefold has a fixed point, the action of $G$ is linearizable.

\section{Equivariant birational rigidity}
\label{sect:Burk-rigid}

Let $X_4\subset \mathbb{P}^4$ be the Burkhardt quartic and $G\subseteq \mathrm{Aut}(X_4)$ be such that $\mathrm{rk}\,\mathrm{Cl}^G(X_4)=1$.
In this section we prove Proposition~\ref{thm:Burkhard-BSR}, i.e., 
we show that $X_4$ is $G$-birationally super-rigid. 
We start by recalling several well-known geometric facts about $X_4$, 
and proving three technical lemmas.

The quartic $X_4$ has $45$ isolated ordinary double points (nodes).
One can also check that
\begin{itemize}
\item a line in $\mathbb{P}^4$ can contain $1$, $2$ or $3$ nodes,

\item a plane in $\mathbb{P}^4$ can contain $1$, $2$, $3$, $4$, $6$ or $9$ nodes,

\item a hyperplane in $\mathbb{P}^4$ can contain $1$, $2$, $3$, $4$, $7$, $10$, $12$ or $18$ nodes.
\end{itemize}
Planes in $\mathbb{P}^4$ containing $9$ nodes of $X_4$ are called Jacobi planes --- these planes are contained in $X_4$.
The threefold $X_4$ contains $40$ Jacobi planes,
each of these $40$ planes contains exactly $9$ nodes of $X_4$,
and there are exactly $8$ planes in $X_4$ that pass through a given node.
The union of all planes in $X_4$ is a divisor in $|10(-K_{X_4})|$, which we denote by~$\mathbf{J}$.
Similarly, hyperplanes in $\mathbb{P}^4$ containing $18$ nodes of $X_4$ are called Steiner hyperplanes ---
their intersections with $X_4$ split into unions of $4$ Jacobi planes.
We will call such unions of $4$ Jacobi planes \emph{tetrahedra}.
There are $40$ Steiner hyperplanes, so $X_4$ contains $40$ tetrahedra.

\begin{lemm}
\label{lemma:tetrahedra}
Let $\Sigma$ be a subset of the singular locus $\mathrm{Sing}(X_4)$, of cardinality  $s=|\Sigma|\ge 1$.
Suppose that at least one of the following  conditions is satisfied:
\begin{enumerate}
\item[$(\mathrm{1})$] $s\leqslant 4$,
\item[$(\mathrm{2})$] $s\in\{5,6\}$, the set $\Sigma$ is contained in a plane in $\mathbb{P}^4$, no $3$ points in $\Sigma$ are collinear,
\item[$(\mathrm{3})$] $s=7$, the set $\Sigma$ is contained in a hyperplane in $\mathbb{P}^4$, no $4$ points of the set $\Sigma$ are contained in a plane in $\mathbb{P}^4$.
\end{enumerate}
Then $X_4$ contains a tetrahedron that is disjoint from $\Sigma$.
\end{lemm}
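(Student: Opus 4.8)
The plan is to argue by a counting (pigeonhole) argument, leveraging the rich combinatorial structure of the $40$ tetrahedra on $X_4$ and the highly transitive action of $\mathsf{PSp}_4(\bF_3)$ on the $45$ nodes. The key numerical input is the incidence between nodes and tetrahedra: there are $40$ tetrahedra, each a union of $4$ Jacobi planes containing $9$ nodes each, but with overlaps along the edges of the tetrahedron. I would first compute, for a \emph{single} node $p\in\mathrm{Sing}(X_4)$, exactly how many of the $40$ tetrahedra contain $p$; call this number $N_1$. Then the number of tetrahedra \emph{disjoint} from $\{p\}$ is $40-N_1$, and more generally, by inclusion–exclusion, the number of tetrahedra meeting a set $\Sigma$ of size $s$ is at most $\sum_{p\in\Sigma} N_1 = s\cdot N_1$ (this crude bound already suffices when $s$ is small relative to $40/N_1$). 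The heart of the proof is to show $s\cdot N_1<40$ in each of the three cases, so that at least one tetrahedron avoids $\Sigma$.

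\textbf{Step 1 (node–tetrahedron incidence).} I would extract $N_1$ from the known configuration: each tetrahedron contains the nodes lying on its four Jacobi planes, and using that $8$ Jacobi planes pass through each node, together with the grouping of the $40$ planes into $40$ tetrahedra, one obtains the precise count of tetrahedra through a fixed node. This is a finite, $\mathsf{PSp}_4(\bF_3)$-symmetric computation (the stabilizer of a node acts on the tetrahedra through it), and I would carry it out in {\tt Magma} on the explicit data recorded in \cite{CTZ-tables}. I expect $N_1$ to be small enough that $4N_1<40$, which would immediately dispatch case $(1)$.

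\textbf{Step 2 (cases $(2)$ and $(3)$: refined incidence).} For $s\in\{5,6,7\}$ the crude bound $s\cdot N_1$ may exceed $40$, so here the geometric hypotheses must be used to sharpen the estimate. In case $(2)$, the constraint that $\Sigma$ lies in a plane with no three collinear points restricts how $\Sigma$ can distribute among the Jacobi planes of a tetrahedron: a single Jacobi plane of a tetrahedron can contain at most a bounded number of points of such a $\Sigma$ (since a line meets a plane in at most a point and no three points are collinear), which forces the tetrahedra meeting $\Sigma$ to do so along few nodes, improving inclusion–exclusion. In case $(3)$, the general-position hypothesis inside a hyperplane plays the analogous role. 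The plan is to bound the number of tetrahedra containing \emph{at least one} point of $\Sigma$ by analyzing, for each configuration type, the possible intersection patterns $\Sigma\cap(\text{tetrahedron})$, again using the fixed incidence data.

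\textbf{The main obstacle} I anticipate is case $(2)$ and especially case $(3)$, where the union bound is too weak and one must genuinely use the position hypotheses. The difficulty is that the combinatorics of which $7$-subsets in general position inside a Steiner hyperplane can be simultaneously met by many tetrahedra is delicate, and the answer presumably depends on the orbit structure of $\Sigma$ under the relevant stabilizer subgroup. I would handle this by reducing, via the $\mathsf{PSp}_4(\bF_3)$-action, to finitely many orbit representatives of such configurations and then verifying the existence of a disjoint tetrahedron directly for each representative with {\tt Magma}, rather than seeking a uniform closed-form estimate. This turns the existential statement into a finite, fully mechanical check on the incidence geometry already tabulated in \cite{CTZ-tables}.
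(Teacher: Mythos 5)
The paper's own proof of this lemma is literally ``Computer computations,'' so your fallback strategy --- reducing to finitely many $\mathsf{PSp}_4(\bF_3)$-orbit representatives of configurations $\Sigma$ and checking each by machine --- is in substance exactly what the authors do. The problem is with the part of your argument that you present as clean and non-computational, namely Step 1.

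The crude union bound does \emph{not} dispatch case $(1)$. Each tetrahedron is cut out by a Steiner hyperplane containing $18$ nodes, and all $18$ of those nodes lie on the tetrahedron (they are distributed $3$ per edge over the $6$ edges, each edge-node lying on exactly $2$ of the $4$ Jacobi planes, consistent with $4\times 9=36=2\times 18$). Hence the node--tetrahedron incidence count is $40\times 18=720=45\times 16$, so $N_1=16$: every node lies on $16$ of the $40$ tetrahedra. Then $s\cdot N_1=48>40$ already for $s=3$, and $64>40$ for $s=4$, so your expectation that $4N_1<40$ is false and the union bound only handles $s\leqslant 2$. To push the counting argument further you would need the second- and third-order incidence numbers (how many tetrahedra contain a given pair or triple of nodes), and these depend on the orbit type of the pair (e.g.\ whether the two nodes span a line containing a third node), so truncated inclusion--exclusion (Bonferroni) does not give a uniform closed-form bound either; at that point you are back to enumerating orbit representatives. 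So the honest statement is that all three cases, not just $(2)$ and $(3)$, require the finite mechanical check you describe in your last paragraph --- which is fine, and is what the paper does, but the ``pigeonhole heart'' of your proposal as written is quantitatively broken.
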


\begin{proof}
Computer computations.
\end{proof}

\begin{lemm}
\label{lemma:lines-conics-twisted-cubic}
Let $C$ be an irreducible curve in $X_4$ of degree $d\leqslant 3$, 
let $\Sigma=C\cap\mathrm{Sing}(X_4)$, and let $s=|\Sigma|$.
If $C$ is a twisted cubic curve, we also suppose that $s\not\in\{5,6\}$.
Then there is a Jacobi plane $\Lambda\subset X_4$ such that $\Lambda\cap C$ contains a smooth point of $X_4$.
\end{lemm}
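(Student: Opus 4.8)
The plan is to play the global intersection of $C$ with the divisor $\mathbf{J}=\sum_{i=1}^{40}\Lambda_{i}$ of all $40$ Jacobi planes against the amount of that intersection that can be hidden at the nodes lying on $C$. First I would dispose of the degenerate case: if $C$ is contained in some Jacobi plane $\Lambda$, then $\Lambda\cap C=C$ already contains smooth points of $X_4$ (since $\Sigma$ is finite), and we are done. So I may assume $C\not\subset\Lambda_i$ for all $i$, hence $C\not\subset\mathbf{J}$ and $C\cap\mathbf{J}$ is a finite scheme. Because $\mathbf{J}\in|10(-K_{X_4})|$ is Cartier (indeed $-K_{X_4}$ is the hyperplane class $H$), we have the global identity $C\cdot\mathbf{J}=10\deg C=10d$. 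Since every point of $C\cap\mathbf{J}$ is either a node of $X_4$ (a point of $\Sigma$) or a smooth point of $X_4$ lying on a Jacobi plane, the conclusion of the lemma is exactly the statement $C\cap\mathbf{J}\not\subseteq\Sigma$. I therefore argue by contradiction, assuming $C\cap\mathbf{J}\subseteq\Sigma$, which forces $10d=\sum_{p\in\Sigma}(C\cdot\mathbf{J})_p$.

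The key step is a local estimate at each node $p\in\Sigma$. Resolving the ordinary double point by the standard blowup $f\colon\widetilde{X}_4\to X_4$ produces an exceptional quadric $E_p\cong\mathbb{P}^1\times\mathbb{P}^1$ with normal bundle $\mathcal{O}(-1,-1)$, and the $8$ Jacobi planes through $p$ have strict transforms meeting $E_p$ in $8$ lines distributed among the two rulings. Since $\mathbf{J}$ is Cartier at $p$ and all $8$ planes occur in $\mathbf{J}$ with multiplicity $1$, the two ruling–multiplicities must coincide, which forces the split to be $4+4$ and gives $f^{*}\mathbf{J}=\widetilde{\mathbf{J}}+4E_p$ (one checks the coefficient $4$ by intersecting with a ruling line and using $E_p\cdot C_{\mathrm{rul}}=-1$). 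At a point $p$ where $C$ is smooth its strict transform meets $E_p$ transversally in a single point $\xi_p$, so $\widetilde{C}\cdot E_p=1$ and at most one line from each ruling passes through $\xi_p$. Hence
\[
4\;\le\;(C\cdot\mathbf{J})_p=\#\{\text{of the }8\text{ lines through }\xi_p\}+4\,(\widetilde{C}\cdot E_p)\;\le\;6 ,
\]
the lower bound coming from the term $4E_p$ and the upper bound from the at most two of the $8$ lines meeting $\xi_p$.

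Feeding these bounds into $10d=\sum_{p\in\Sigma}(C\cdot\mathbf{J})_p$ yields $\tfrac{5d}{3}\le s\le\tfrac{5d}{2}$. For $d=1$ this forces $s=2$; for $d=2$ it forces $s\in\{4,5\}$; and for a twisted cubic ($d=3$) it forces $s\in\{5,6,7\}$, so that the hypothesis $s\notin\{5,6\}$ leaves only $s=7$. Thus in every case the assumption $C\cap\mathbf{J}\subseteq\Sigma$ survives only for a short, explicit list of node–counts. I would then eliminate these residual configurations using the known incidence data of the $45$ nodes and $40$ Jacobi planes of $X_4$: a twisted cubic spans a $\mathbb{P}^3$ and carries at most six nodes, ruling out $s=7$; for $s=2$ (lines), $s\in\{4,5\}$ (conics), and the singular plane cubics singular at a node (where the bound is larger because $\widetilde{C}\cdot E_p=2$), one checks, exactly as in Lemma~\ref{lemma:tetrahedra}, that $(C\cdot\mathbf{J})_p$ exceeds its generic value only when $C$ is tangent at $p$ to one of the $8$ Jacobi planes through $p$, and that such tangencies cannot occur often enough to absorb all of $10d$.

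The main obstacle will be precisely this last step. The numerical estimate settles the twisted cubic cleanly once one knows $s\le 6$, and it is the borderline behaviour at $s=5,6$—where $4s\le 10d\le 6s$ can hold with equality—that accounts for the exclusion of those two values in the statement. For the remaining small curves the inequality $4\le(C\cdot\mathbf{J})_p\le 6$ does not by itself close the gap: it is numerically consistent with all of the intersection concentrating at two nodes (a line tangent to Jacobi planes) or at four–five nodes (a conic), so ruling out these tangency configurations genuinely requires the explicit equations of $X_4$ and a finite computer verification of the node–plane tangency data, of the same nature as the computation underlying Lemma~\ref{lemma:tetrahedra}.
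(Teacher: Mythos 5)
Your strategy --- computing $C\cdot\mathbf{J}=10d$ globally and trying to bound how much of it can concentrate at the nodes of $\Sigma$ --- is genuinely different from the paper's, which instead uses Lemma~\ref{lemma:tetrahedra} to produce an entire tetrahedron of Jacobi planes \emph{disjoint} from $\Sigma$: a tetrahedron is a hyperplane section, so it meets $C$, necessarily in a smooth point, and no local intersection multiplicities ever need to be controlled. The paper reserves an intersection-theoretic computation ($0\leqslant\widetilde{\mathbf{J}}\cdot\widetilde{C}=30-4E\cdot\widetilde{C}\leqslant 30-4s$) only for twisted cubics with $s\in\{8,9\}$. As written, your argument does not close, and the gaps are not merely cosmetic. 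The upper bound $(C\cdot\mathbf{J})_p\leqslant 6$ holds only when $\widetilde{C}$ meets the strict transforms of the (at most two) Jacobi planes through $\xi_p$ transversally; a tangency inflates the local multiplicity, and you concede that excluding tangencies for the residual configurations ($s=2$ for lines, $s\in\{4,5\}$ for conics) requires a verification you have not carried out. So the cases $d\leqslant 2$ are not finished. The paper sidesteps this: $\mathrm{Sing}(X_4)$ is cut out by cubics, so $s\leqslant 3d\leqslant 6$, and conditions (1)--(2) of Lemma~\ref{lemma:tetrahedra} apply directly (for a conic no three points are collinear).

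The second gap is your disposal of $s=7$ for twisted cubics: the claim that a twisted cubic ``carries at most six nodes'' is unsubstantiated. The a priori bound is $s\leqslant 3d=9$, and your own local estimate only yields $s\leqslant 7$ (indeed $4\cdot 7=28\leqslant 30$ is perfectly consistent), so $s=7$ is precisely the case that needs an argument. The paper handles it through Lemma~\ref{lemma:tetrahedra}(3), whose hypotheses (the seven nodes lie in the hyperplane spanned by the twisted cubic, and no four are coplanar because a plane meets a twisted cubic in at most three points) are tailored to this situation --- another computer-verified incidence fact you would have to import. Finally, you never really treat irreducible plane cubics: the paper's one-line observation that $X_4\vert_{\Pi}=C+\ell$ forces $\Sigma\subset\mathrm{Sing}(C+\ell)$, hence $s\leqslant 4$, is what reduces that case to Lemma~\ref{lemma:tetrahedra}(1), whereas your $d=3$ numerology is computed for the non-planar case. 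In short, the skeleton of your approach is coherent, but every branch terminates in an unperformed finite verification, and at least one of the claimed shortcuts ($s\leqslant 6$ for twisted cubics) is false as stated.
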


\begin{proof}
Since the locus $\mathrm{Sing}(X_4)$ is an intersection of cubics in $\mathbb{P}^4$, we see that $s\leqslant 3d$.
Thus, if $d\leqslant 2$, then $X_4$ contains a tetrahedron $T$ that is disjoint from $\Sigma$ by Lemma~\ref{lemma:tetrahedra},
so that $T\cap C$ contains a smooth point of $X_4$, since $T\cap C\ne\varnothing$.
This proves the lemma in the case $d\leqslant 2$.
Hence, we may assume that $C$ is  either a plane cubic or a twisted cubic.

Suppose that $C$ is a plane cubic. Let $\Pi$ be the plane in $\mathbb{P}^4$ that contains $C$.
If $\Pi\subset X_4$, we are done. If $\Pi\not\subset X_4$,
then $X_4\vert_{\Pi}=C+\ell$ for some line $\ell$,
which gives $s\leqslant 4$, since $\Sigma\subset\mathrm{Sing}(C+\ell)$.
Now, the required assertion follows from Lemma~\ref{lemma:tetrahedra}.

To complete the proof, we may assume that $C$ is a twisted cubic curve.
If $s\leqslant 4$ or $s=7$, the assertion follows from Lemma~\ref{lemma:tetrahedra}.
Thus, we may assume that $s\in\{8,9\}$.
Let $f\colon\widetilde{X}_4\to X_4$ be the blow up of all singular points of $X_4$,
let $\widetilde{\mathbf{J}}$ be the strict transform on $\widetilde{X}_4$ of the divisor~$\mathbf{J}$,
let $E$ be the union of all $f$-exceptional prime divisors,
and let $\widetilde{C}$ be the strict transform on $\widetilde{X}_4$ of the curve $C$. Then
$$
0\leqslant\widetilde{\mathbf{J}}\cdot\widetilde{C}=\Big(f^*\big(-10K_{X_4}\big)-4E\Big)\cdot\widetilde{C}=30-4E\cdot\widetilde{C}\leqslant 30-4s\leqslant -2,
$$
which is absurd. This completes the proof of the lemma.
\end{proof}

\begin{lemm}
\label{lemma:twisted-cubics}
Let $C\subset X_4$ be a twisted cubic curve and $\Sigma=C\cap\mathrm{Sing}(X_4)$. Set $s=|\Sigma|$
and let $\mathcal{M}$ be a non-empty mobile linear subsystem in $|-nK_{X_4}|$ for some positive integer $n$.
Suppose that $s\leqslant 6$. Then $\mathrm{mult}_C(\mathcal{M})\leqslant n$.
\end{lemm}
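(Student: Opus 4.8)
The anticanonical class of $X_4$ is the hyperplane class, $-K_{X_4}=H$, so $-K_{X_4}\cdot C=\deg C=3$. The engine of the proof is the following restriction principle. Suppose $\Lambda\subset X_4$ is a Jacobi plane with $C\not\subset\Lambda$ such that $\Lambda\cap C$ contains a point $p$ that is a \emph{smooth} point of $X_4$. A general member $M\in\mathcal{M}$ does not contain $\Lambda$ (as $\mathcal{M}$ is mobile and $\Lambda$ is an irreducible divisor), so $M\vert_\Lambda$ is a plane curve of degree $n$ in $\Lambda\cong\bP^2$, cut out by the degree-$n$ form defining $M$. Since $p$ is a smooth point of $X_4$ on $C$ and $C\not\subset\Lambda$, a local computation shows that restriction to $\Lambda$ cannot lower the multiplicity along $C$, so $\mathrm{mult}_p(M\vert_\Lambda)\ge\mathrm{mult}_C(\mathcal{M})$. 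As a plane curve of degree $n$ has multiplicity at most $n$ at any point, we conclude $\mathrm{mult}_C(\mathcal{M})\le n$.

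If $s\le 4$, then Lemma~\ref{lemma:lines-conics-twisted-cubic} produces exactly such a Jacobi plane, and the restriction principle finishes the proof. The difficulty is concentrated in the range $s\in\{5,6\}$, which is precisely the case excluded in Lemma~\ref{lemma:lines-conics-twisted-cubic}: a plane in $\bP^4$ meets $C$ in at most three points, so the five or six nodes of $\Sigma$ are never coplanar, hypothesis $(2)$ of Lemma~\ref{lemma:tetrahedra} is unavailable, and one cannot certify a Jacobi plane meeting $C$ at a smooth point. Quantitatively, writing $f\colon\widetilde{X}_4\to X_4$ for the blow up of the nodes and using $\widetilde{\mathbf{J}}=f^*(-10K_{X_4})-4E$, one gets $\widetilde{\mathbf{J}}\cdot\widetilde{C}=30-4s$, which stays positive for $s\le 6$; but for $s\in\{5,6\}$ this residual intersection ($6$ or $10$) can be entirely absorbed at the $s$ points of $\widetilde C\cap E$, so it does \emph{not} force a smooth intersection point. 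A genuinely different argument is therefore required.

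For $s\in\{5,6\}$ the plan is to restrict $\mathcal{M}$ to the surface $S=X_4\cap P$, where $P=\langle C\rangle\cong\bP^3$ is the linear span of $C$; this is a quartic surface containing $C$. Since $S\subset X_4$ and $C\subset S$, restriction \emph{preserves} multiplicity, $\mathrm{mult}_C(\mathcal{M}\vert_S)\ge m$ where $m:=\mathrm{mult}_C(\mathcal{M})$. I would first verify that for $s\le 6$ the surface $S$ is irreducible with only ordinary double points (the $s$ nodes of $X_4$ on $C$), so that its minimal resolution $g\colon\widehat{S}\to S$ is a K3 surface; the strict transform $\widehat{C}$ is then a smooth rational curve with $\widehat{C}^2=-2$, while $\widehat{H}:=g^*H$ satisfies $\widehat{H}^2=4$ and $\widehat{H}\cdot\widehat{C}=3$. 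Taking two general members $M_1,M_2\in\mathcal{M}$ and writing $M_i\vert_S=m\widehat{C}+R_i$ on $\widehat{S}$, so that $\widehat{C}\cdot R_i=3n+2m$, the identity $M_1\vert_S\cdot M_2\vert_S=(n\widehat{H})^2=4n^2$ expands to $4n^2=2m^2+6nm+R_1\cdot R_2$. Provided $R_1\cdot R_2\ge 0$, this gives $m^2+3nm-2n^2\le 0$, hence $m\le\tfrac{\sqrt{17}-3}{2}\,n<n$, which is even stronger than required.

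The crux — and the step I expect to be the main obstacle — is establishing the two positivity inputs for $s\in\{5,6\}$: that $\widehat{S}$ is genuinely a K3 surface with $\widehat{C}^2=-2$ (equivalently, that $S$ is irreducible with only nodal singularities, which is where the bound $s\le 6$ should enter), and that the residual classes satisfy $R_1\cdot R_2\ge 0$. The latter amounts to controlling the fixed part of $\mathcal{M}\vert_S$: if $\mathrm{mult}_C(\mathcal{M}\vert_S)>m$, or if $\mathcal{M}\vert_S$ acquires extra fixed curves, one must subtract the entire fixed part before invoking nonnegativity of the mobile residual, and then check that the numerical inequality survives the correction terms. I anticipate that this fixed-part analysis will require the explicit equations of $X_4$ and its Jacobi planes, together with a {\tt Magma}-assisted classification of the finitely many twisted cubics passing through five or six nodes, in the spirit of Lemma~\ref{lemma:tetrahedra}.
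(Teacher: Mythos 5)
Your argument for $s\leqslant 4$ is fine (restricting to a Jacobi plane meeting $C$ at a smooth point of $X_4$ is exactly the trick the paper uses elsewhere), and you have correctly located where the real content of the lemma lies: $s\in\{5,6\}$ is the only range for which Lemma~\ref{lemma:lines-conics-twisted-cubic} is unavailable and the only range for which Lemma~\ref{lemma:twisted-cubics} is actually invoked in the proof of Proposition~\ref{thm:Burkhard-BSR}. But precisely there your proof is not a proof: it is a strategy with two unclosed gaps that you yourself flag. First, you need $S=X_4\cap\langle C\rangle$ to be an irreducible normal quartic with only rational double points, so that $\widehat{S}$ is a K3 and $\widehat{C}^2=-2$; this hyperplane is far from general, since it contains at least $5$ nodes of $X_4$ and possibly many more, and the paper's own taxonomy of special hyperplane sections (sections through $7$, $10$, $12$, $18$ nodes, Steiner hyperplanes splitting into Jacobi planes) shows that degenerate behaviour genuinely occurs; nothing in your write-up rules out a reducible or non-normal $S$ through $C$. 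Second, $R_1\cdot R_2\geqslant 0$ is not automatic: after removing $m\widehat{C}$ the residual system can still have a fixed part $F$ with $F^2<0$, and since your final bound $m\leqslant\frac{\sqrt{17}-3}{2}\,n$ is substantially stronger than the stated $m\leqslant n$, the positivity you are assuming is doing real work and cannot be waved through. Deferring both points to an unperformed {\tt Magma} classification leaves the lemma unproved in the one case that matters.

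For comparison, the paper avoids the surface $S$ entirely and argues uniformly in $s\leqslant 6$ on the threefold: blow up $\Sigma$ and then the strict transform of $C$, write $\widetilde{\mathcal{M}}\sim_{\mathbb{Q}}f^*(-nK_{X_4})-mF-\sum a_iE_i$, extract the local inequality $a_i\geqslant\frac{m}{2}$ by restricting to the exceptional surfaces $E_i$ (degree~$7$ Del Pezzo surfaces), and then intersect two general members of $\widetilde{\mathcal{M}}$ with a general member $D$ of $|2H-F-\sum E_i|$, which is nef because a twisted cubic is cut out by quadrics. The resulting quadratic inequality in $m$ and the $a_i$, combined with $s\leqslant 6$, yields $m\leqslant n$. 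The key input you are missing is this use of the quadrics through $C$ to manufacture a nef class; if you want to salvage your K3 route you would have to supply the irreducibility/nodality of $S$ and the fixed-part control, whereas the paper's computation needs neither.
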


\begin{proof}
If $s\geqslant 1$, let $g\colon\overline{X}_4\to X_4$ be the blow up of $\Sigma$.
If $s=0$, we let $\overline{X}_4=X_4$ and $g=\mathrm{Id}_{X_4}$.
Let $f\colon\widetilde{X}_4\to\overline{X}_4$ be the blow up of the strict transform on $\overline{X}_4$ of the twisted cubic curve $C$,
let $F$ be the $f$-exceptional surface, let~$E_1,\ldots,E_s$ be the $(f\circ g)$-exceptional prime divisors that are mapped to the~subset $\Sigma$,
and let $\widetilde{\mathcal{M}}$ be the strict transform on the~threefold $\widetilde{X}_4$.
Set  $m=\mathrm{mult}_C(\mathcal{M})$. Then
$$
\widetilde{\mathcal{M}}\sim_{\mathbb{Q}}f^*\big(-nK_{X_4}\big)-mF-\sum_{i=1}^{s}a_iE,
$$
where $a_1,\ldots,a_s\in \bZ_{\ge 0}$.
We have to show that $m\leqslant n$.

If $s\geqslant 1$, then each $E_i$ is a Del Pezzo surface of degree~$7$ and
$$
\widetilde{\mathcal{M}}\big\vert_{E_i}\sim -mF\big\vert_{E_i}-nE_i\big\vert_{E_i},
$$
which implies that $m\leqslant 2a_i$. So, if $s\geqslant 1$, then
$a_i\geqslant \frac{m}{2}$ for every $i$.

Set $H=(f\circ g)^*(-K_{X_4})$. Then $|2H-F-\sum_{i=1}^sE_i|$ does not have base curves,
because the~curve $C$ is cut out by quadrics in $\mathbb{P}^4$.
Let $D$ be a general surface in $|2H-F-\sum_{i=1}^sE_i|$.
Then $D$ is nef. Let $\widetilde{M}_1$ and  $\widetilde{M}_2$ be general surfaces in $\widetilde{\mathcal{M}}$.
Then $D\cdot\widetilde{M}_1\cdot\widetilde{M}_2\geqslant 0$.

Let us compute $D\cdot\widetilde{M}_1\cdot\widetilde{M}_2$. We have
\begin{align*}
H^3&=4,       & E_i\cdot H^2&=0, & F\cdot E_i^2&=0, & F^3&=s-1, & E_i\cdot F^2&=-s,\\
H\cdot F^2&=-3, & F\cdot H^2&=0, & E_i\cdot F\cdot H&=0, & E_i^3&=2s, & H\cdot E_i^2&=0.
\end{align*}
Then
$$
0\leqslant D\cdot\widetilde{M}_1\cdot\widetilde{M}_2=8-5m^2+2\Big(\sum_{i=1}^sa_i-3\Big)m-2\sum_{i=1}^{s}a_i^2.
$$
This gives $m\leqslant n$, since $s\leqslant 6$ and $a_i\geqslant \frac{m}{2}$ for $i\in\{1,\ldots,s\}$.
\end{proof}

Now, we are ready to prove that $X_4$ is $G$-birationally super-rigid. 
Suppose it is not.
By Corollary~\ref{coro:NFI}, there is a non-empty $G$-invariant mobile linear subsystem
$\mathcal{M}\subset |-nK_{X_4}|$, for some positive integer $n$, such that
the singularities of the pair $(X_4,\frac{1}{n}\mathcal{M})$ are not canonical.
Let us seek a contradiction.

Set $\lambda=\frac{1}{n}$. Let $Z$ be a center of non-canonical singularities of the log pair $(X_4,\lambda\mathcal{M})$,
let $M_1$ and $M_2$ be two general surfaces in the linear system $\mathcal{M}$.
If $Z$ is a smooth point of $X_4$,
then it follows from \cite{Pukhlikov} or \cite[Corollary 3.4]{Co00} that
$$
\Big(M_1\cdot M_2\Big)_Z>\frac{4}{\lambda^2}=4n^2,
$$
which leads to a contradiction:
$$
4n^2=\frac{4}{\lambda^2}=H\cdot M_1\cdot M_2\geqslant\Big(M_1\cdot M_2\Big)_Z>\frac{4}{\lambda^2}=4n^2,
$$
where $H$ is a general hyperplane section of $X_4$ passing through $P$.
Thus, either $Z$ is a singular point of $X_4$, or $Z$ is an irreducible curve. 

Suppose that $Z$ a singular point of $X_4$.
Let $f\colon\widetilde{X}_4\to X_4$ be the blow up of this point,
let $E$ be the $f$-exceptional surface, let $\widetilde{\mathcal{M}}$ be the strict transform on $\widetilde{X}_4$
of the~linear system $\mathcal{M}$, and let $\widetilde{M}$ be a general surface in  $\widetilde{\mathcal{M}}$.
Then
$$
\widetilde{\mathcal{M}}\sim_{\mathbb{Q}}f^*\big(-nK_{X_4}\big)-aE,
$$
for some  integer $a>n$, by \cite[Theorem~1.7.20]{CheltsovUMN} or \cite[Theorem~3.10]{Co00}.
Now, let $\Pi$ be a Jacobi plane in $X_4$ that contains $Z$,
let $L$ be a general line in $\Pi$ that contains $Z$,
and let $\widetilde{L}$ be its strict transform on~$\widetilde{X}_4$.
Then $\widetilde{L}\not\subset\widetilde{M}$, so that $0\leqslant\widetilde{M}\cdot\widetilde{L}=n-a<0$,
which is absurd. 

Thus, $Z$ is an irreducible curve. Then $\mathrm{mult}_{Z}(\mathcal{M})>\frac{1}{\lambda}=n$.
Write
$$
M_1\cdot M_2=mZ+\Delta,
$$
where $m$ is a positive integer such that  $m>n^2$, and $\Delta$ is an effective one-cycle whose support does not contain $Z$.
Then
$$
4n^2=\!\frac{4}{\lambda^2}\!=\!-K_{X_4}\cdot M_1\cdot M_2\!=\!m\mathrm{deg}(Z)-K_{X_4}\cdot\Delta\geqslant m\mathrm{deg}(Z)>n^2\mathrm{deg}(Z),
$$
which gives $\mathrm{deg}(Z)\in\{1,2,3\}$.
As in Lemma~\ref{lemma:lines-conics-twisted-cubic}, let $\Sigma=Z\cap\mathrm{Sing}(X_4)$, and set $s=|\Sigma|$.
If $Z$ is a twisted cubic, then $s\not\in\{5,6\}$ by Lemma~\ref{lemma:twisted-cubics}.
Thus, it follows from Lemma~\ref{lemma:lines-conics-twisted-cubic} that $X_4$
contains a Jacobi plane $\Pi$ such that  $\Pi\cap Z$ contains a smooth point $P$ of $X_4$.
Let $\ell$ be a general line in this plane that contains $P$. Then $\ell\not\subset M_1$, so  
$$
n=\frac{1}{\lambda}=M_1\cdot\ell\geqslant\mathrm{mult}_P(M_1)\geqslant\mathrm{mult}_Z(M_1)=\mathrm{mult}_Z(\mathcal{M})>\frac{1}{\lambda}=n,
$$
which is absurd. This completes the proof of Proposition~\ref{thm:Burkhard-BSR}.

\bibliographystyle{plain}
\bibliography{segre}

\begin{thebibliography}{10}

\bibitem{ArapCutroneMarshburn2017}
M.~Arap, J.~Cutrone, and N.~Marshburn.
\newblock On the existence of certain weak {Fano} threefolds of {Picard} number
  two.
\newblock {\em Math. Scand.}, 120(1):68--86, 2017.

\bibitem{Avilov2016}
A.~Avilov.
\newblock Automorphisms of threefolds that can be represented as an
  intersection of two quadrics.
\newblock {\em Sb. Math.}, 207(3):315--330, 2016.

\bibitem{Avilov}
A.~Avilov.
\newblock Automorphisms of singular three-dimensional cubic hypersurfaces.
\newblock {\em Eur. J. Math.}, 4(3):761--777, 2018.

\bibitem{avilov-forms}
A.~Avilov.
\newblock On the forms of the {S}egre cubic.
\newblock {\em Mat. Zametki}, 107(1):3--10, 2020.

\bibitem{BlancLamy2012}
J.~Blanc and St. Lamy.
\newblock Weak {Fano} threefolds obtained by blowing-up a space curve and
  construction of {Sarkisov} links.
\newblock {\em Proc. Lond. Math. Soc. (3)}, 105(5):1047--1075, 2012.

\bibitem{BogPro}
F.~Bogomolov and Yu. Prokhorov.
\newblock On stable conjugacy of finite subgroups of the plane {C}remona group,
  {I}.
\newblock {\em Cent. Eur. J. Math.}, 11(12):2099--2105, 2013.

\bibitem{BBT}
Chr. B\"ohning, H.-Chr. Graf~von Bothmer, and Yu. Tschinkel.
\newblock Equivariant birational geometry of cubic fourfolds and derived
  categories, 2023.
\newblock {\tt 2303.17678}.

\bibitem{bruin-1}
N.~Bruin and E.~Filatov.
\newblock Twists of the {B}urkhardt quartic threefold.
\newblock {\em Res. Number Theory}, 8(4):Paper No. 73, 16, 2022.

\bibitem{bruin-2}
N.~Bruin and B.~Nasserden.
\newblock Arithmetic aspects of the {B}urkhardt quartic threefold.
\newblock {\em J. Lond. Math. Soc. (2)}, 98(3):536--556, 2018.

\bibitem{Calegari}
F.~Calegari and Sh. Chidambaram.
\newblock Rationality of twists of the {S}iegel modular variety of genus 2 and
  level 3.
\newblock {\em Proc. Amer. Math. Soc.}, 150(5):1975--1984, 2022.

\bibitem{Cheltsov2023}
I.~Cheltsov.
\newblock Kummer quartic double solids.
\newblock {\em Rend. Circ. Mat. Palermo (2)}, 72(3):1993--2023, 2023.

\bibitem{Ch-toric}
I.~Cheltsov, A.~Dubouloz, and T.~Kishimoto.
\newblock Toric {$G$}-solid {F}ano threefolds.
\newblock {\em Selecta Math. (N.S.)}, 29(2):Paper No. 22, 45, 2023.

\bibitem{CKS}
I.~Cheltsov, A.~Kuznetsov, and K.~Shramov.
\newblock Coble fourfold, {$\fS_6$}-invariant quartic threefolds, and
  {W}iman-{E}dge sextics.
\newblock {\em Algebra Number Theory}, 14(1):213--274, 2020.

\bibitem{CheltsovPark2010}
I.~Cheltsov and J.~Park.
\newblock In {\em Cohomological and geometric approaches to rationality
  problems. New Perspectives}, pages 75--132. Boston, MA: Birkh{\"a}user, 2010.

\bibitem{Ch-Burk}
I.~Cheltsov, V.~Przyjalkowski, and C.~Shramov.
\newblock Burkhardt quartic, {B}arth sextic, and the icosahedron.
\newblock {\em Int. Math. Res. Not. IMRN}, (12):3683--3703, 2019.

\bibitem{CSar}
I.~Cheltsov and A.~Sarikyan.
\newblock Equivariant pliability of the projective space, 2022.
\newblock {\tt arXiv:2202.09319}.

\bibitem{CheShr}
I.~Cheltsov and C.~Shramov.
\newblock Three embeddings of the {K}lein simple group into the {C}remona group
  of rank three.
\newblock {\em Transform. Groups}, 17(2):303--350, 2012.

\bibitem{ChS-5}
I.~Cheltsov and C.~Shramov.
\newblock Five embeddings of one simple group.
\newblock {\em Trans. Amer. Math. Soc.}, 366(3):1289--1331, 2014.

\bibitem{CS}
I.~Cheltsov and C.~Shramov.
\newblock {\em Cremona groups and the icosahedron}.
\newblock Monographs and Research Notes in Mathematics. CRC Press, Boca Raton,
  FL, 2016.

\bibitem{CS-finite}
I.~Cheltsov and C.~Shramov.
\newblock Finite collineation groups and birational rigidity.
\newblock {\em Selecta Math. (N.S.)}, 25(5):Paper No. 71, 68, 2019.

\bibitem{CTZ-tables}
I.~Cheltsov, Yu. Tschinkel, and Zh. Zhang.
\newblock Supporting tables, 2023.
\newblock available at: \url{https://zhijiazhangz.github.io./BurkSegre.html}.

\bibitem{CheltsovUMN}
I.~A. Cheltsov.
\newblock Birationally rigid {Fano} varieties.
\newblock {\em Russ. Math. Surv.}, 60(5):875--965, 2005.

\bibitem{Corti1995}
A.~Corti.
\newblock Factoring birational maps of threefolds after {Sarkisov}. {Appendix}:
  {Surfaces} over nonclosed fields.
\newblock {\em J. Algebr. Geom.}, 4(2):223--254, appendix 248--254, 1995.

\bibitem{Co00}
A.~Corti.
\newblock Singularities of linear systems and 3-fold birational geometry.
\newblock In {\em Explicit birational geometry of 3-folds}, pages 259--312.
  Cambridge: Cambridge University Press, 2000.

\bibitem{CutroneLimarziMarshburn2019}
J.~W. Cutrone, M.~A. Limarzi, and N.~A. Marshburn.
\newblock A weak {Fano} threefold arising as a blowup of a curve of genus 5 and
  degree 8 on {{\({\mathbb{P}}^3\)}}.
\newblock {\em Eur. J. Math.}, 5(3):763--770, 2019.

\bibitem{CutroneMarshburn2013}
J.~W. Cutrone and N.~A. Marshburn.
\newblock Towards the classification of weak {Fano} threefolds with {{\(\rho =
  2\)}}.
\newblock {\em Cent. Eur. J. Math.}, 11(9):1552--1576, 2013.

\bibitem{dejong}
A.~J. de~Jong, N.~I. Shepherd-Barron, and A.~Van~de Ven.
\newblock On the {Burkhardt} quartic.
\newblock {\em Math. Ann.}, 286(1-3):309--328, 1990.

\bibitem{DI}
I.~V. Dolgachev and V.~A. Iskovskikh.
\newblock Finite subgroups of the plane {C}remona group.
\newblock In {\em Algebra, arithmetic, and geometry: in honor of {Y}u. {I}.
  {M}anin. {V}ol. {I}}, volume 269 of {\em Progr. Math.}, pages 443--548.
  Birkh\"{a}user Boston, Boston, MA, 2009.

\bibitem{finkelberg}
H.~Finkelnberg.
\newblock On the geometry of the {B}urkhardt quartic, 1989.
\newblock Leiden, (Ph.D. thesis).

\bibitem{FR}
M.~Florence and Z.~Reichstein.
\newblock The rationality problem for forms of {$\overline M_{0,n}$}.
\newblock {\em Bull. Lond. Math. Soc.}, 50(1):148--158, 2018.

\bibitem{HaconMcKernan2013}
Chr.~D. Hacon and J.~McKernan.
\newblock The {Sarkisov} program.
\newblock {\em J. Algebr. Geom.}, 22(2):389--405, 2013.

\bibitem{HKTsmall}
B.~Hassett, A.~Kresch, and Yu. Tschinkel.
\newblock Symbols and equivariant birational geometry in small dimensions.
\newblock In {\em Rationality of varieties}, volume 342 of {\em Progr. Math.},
  pages 201--236. Birkh\"{a}user/Springer, Cham, [2021] \copyright 2021.

\bibitem{HT-quad}
B.~Hassett and Yu. Tschinkel.
\newblock Rationality of complete intersections of two quadrics over nonclosed
  fields.
\newblock {\em Enseign. Math.}, 67(1-2):1--44, 2021.
\newblock With an appendix by Jean-Louis Colliot-Th\'{e}l\`ene.

\bibitem{HT-torsor}
B.~Hassett and Yu. Tschinkel.
\newblock Torsors and stable equivariant birational geometry, 2022.
\newblock {\tt arXiv:2204.03106}.

\bibitem{hoshi}
A.~Hoshi and A.~Yamasaki.
\newblock Rationality problem for algebraic tori.
\newblock {\em Mem. Amer. Math. Soc.}, 248(1176):v+215, 2017.

\bibitem{isk-s3}
V.~A. Iskovskikh.
\newblock Two non-conjugate embeddings of {$S_3\times Z_2$} into the {C}remona
  group. {II}.
\newblock In {\em Algebraic geometry in {E}ast {A}sia---{H}anoi 2005},
  volume~50 of {\em Adv. Stud. Pure Math.}, pages 251--267. Math. Soc. Japan,
  Tokyo, 2008.

\bibitem{JahnkePeternellRadloff2005}
P.~Jahnke, Th. Peternell, and I.~Radloff.
\newblock Threefolds with big and nef anticanonical bundles. {I}.
\newblock {\em Math. Ann.}, 333(3):569--631, 2005.

\bibitem{JahnkePeternellRadloff2011}
P.~Jahnke, Th. Peternell, and I.~Radloff.
\newblock Threefolds with big and nef anticanonical bundles. {II}.
\newblock {\em Cent. Eur. J. Math.}, 9(3):449--488, 2011.

\bibitem{Ko2022}
J.~Koll\'{a}r.
\newblock Automorphisms and twisted forms of rings of invariants, 2022.
\newblock {\tt arXiv:2212.03772}.

\bibitem{KT-Cremona}
A.~Kresch and Yu. Tschinkel.
\newblock Cohomology of finite subgroups of the plane {C}remona group, 2022.
\newblock {\tt arXiv:2203.01876}.

\bibitem{BnG}
A.~Kresch and Yu. Tschinkel.
\newblock Equivariant birational types and {B}urnside volume.
\newblock {\em Ann. Sc. Norm. Super. Pisa Cl. Sci. (5)}, 23(2):1013--1052,
  2022.

\bibitem{KT-vector}
A.~Kresch and Yu. Tschinkel.
\newblock Equivariant {B}urnside groups and representation theory.
\newblock {\em Selecta Math. (N.S.)}, 28(4):Paper No. 81, 39, 2022.

\bibitem{kun3}
B.~\`E. Kunyavski\u{\i}.
\newblock Three-dimensional algebraic tori.
\newblock In {\em Investigations in number theory ({R}ussian)}, pages 90--111.
  Saratov. Gos. Univ., Saratov, 1987.
\newblock Translated in Selecta Math. Soviet. {{\bf{9}}} (1990), no. 1, 1--21.

\bibitem{Mella}
M.~Mella.
\newblock Birational geometry of quartic 3-folds. {II}: {The} importance of
  being {{\(\mathbb{Q}\)}}-factorial.
\newblock {\em Math. Ann.}, 330(1):107--126, 2004.

\bibitem{pettersen}
K.~Pettersen.
\newblock On nodal determinental hypersurfaces in $\bf{P}^4$, 1998.
\newblock Ph.D. thesis.

\bibitem{Pinardin}
A.~Pinardin.
\newblock {$G$}-solid rational rurfaces, 2023.
\newblock {\tt arXiv:2309.13757}.

\bibitem{ProSimple}
Yu. Prokhorov.
\newblock Simple finite subgroups of the {C}remona group of rank 3.
\newblock {\em J. Algebraic Geom.}, 21(3):563--600, 2012.

\bibitem{Prokhorov2013}
Yu. Prokhorov.
\newblock {{\(G\)}}-{Fano} threefolds. {II}.
\newblock {\em Adv. Geom.}, 13(3):419--434, 2013.

\bibitem{Pro-inv}
Yu. Prokhorov.
\newblock On birational involutions of {$\Bbb P^3$}.
\newblock {\em Izv. Ross. Akad. Nauk Ser. Mat.}, 77(3):199--222, 2013.

\bibitem{prokhorovII}
Yu. Prokhorov.
\newblock On stable conjugacy of finite subgroups of the plane {C}remona group,
  {II}.
\newblock {\em Michigan Math. J.}, 64(2):293--318, 2015.

\bibitem{Pro-ICM}
Yu. Prokhorov.
\newblock Finite groups of birational transformations, 2021.
\newblock {\tt arXiv:2108.13325}.

\bibitem{Pukhlikov}
A.~V. Pukhlikov.
\newblock Birational automorphisms of {Fano} hypersurfaces.
\newblock {\em Invent. Math.}, 134(2):401--426, 1998.

\bibitem{sako}
D.~Sakovics.
\newblock {$G$}-birational rigidity of the projective plane.
\newblock {\em Eur. J. Math.}, 5(3):1090--1105, 2019.

\bibitem{sari}
A.~Sarikyan.
\newblock On linearization problems in the plane {C}remona group, 2020.
\newblock {\tt arXiv:2009.05761}.

\bibitem{Takuchi1989}
K.~Takeuchi.
\newblock Some birational maps of {Fano} 3-folds.
\newblock {\em Compos. Math.}, 71(3):265--283, 1989.

\bibitem{Takeuchi2022}
K.~Takeuchi.
\newblock Weak {Fano} threefolds with del {Pezzo} fibration.
\newblock {\em Eur. J. Math.}, 8(3):1225--1290, 2022.

\bibitem{todd}
J.~A. Todd.
\newblock On a quartic primal with forty-five nodes, in space of four
  dimensions.
\newblock {\em Q. J. Math., Oxf. Ser.}, 7:168--174, 1936.

\bibitem{TY}
Yu. Tschinkel and K.~Yang.
\newblock Potentially stably rational del {P}ezzo surfaces over nonclosed
  fields.
\newblock In {\em Combinatorial and additive number theory. {III}}, volume 297
  of {\em Springer Proc. Math. Stat.}, pages 227--233. Springer, Cham, 2020.

\bibitem{TYZ}
Yu. Tschinkel, K.~Yang, and Zh. Zhang.
\newblock Combinatorial {B}urnside groups.
\newblock {\em Res. Number Theory}, 8(2):Paper No. 33, 2022.

\bibitem{TYZ-3}
Yu. Tschinkel, K.~Yang, and Zh. Zhang.
\newblock Equivariant birational geometry of linear actions, 2023.
\newblock {\tt arXiv:2302.02296}.

\end{thebibliography}
\end{document}